\newcommand\kolla{\relax}
\numberwithin{equation}{section}
\newtheorem{theorem}{Theorem}[section]
\newtheorem{lemma}[theorem]{Lemma}
\newtheorem{proposition}[theorem]{Proposition}
\theoremstyle{definition}
\newtheorem{problem}[theorem]{Problem}
\newtheorem{remark}[theorem]{Remark}
\newtheorem*{ack}{Acknowledgement}
\theoremstyle{remark}
\newenvironment{romenumerate}{\begin{enumerate}
 }{\end{enumerate}}
\newcounter{oldenumi}
{\setcounter{oldenumi}{\value{enumi}}
\begin{romenumerate} \setcounter{enumi}{\value{oldenumi}}}
{\end{romenumerate}}
\newcounter{thmenumerate}
\newcounter{xenumerate}   
\newcommand\step[2]{\vskip 10pt \noindent\emph{Step #1: #2} \noindent}
\newcommand{\refT}[1]{Theorem~\ref{#1}}
\newcommand{\refL}[1]{Lemma~\ref{#1}}
\newcommand{\refR}[1]{Remark~\ref{#1}}
\newcommand{\refS}[1]{Section~\ref{#1}}
\newcommand{\refand}[2]{\ref{#1} and~\ref{#2}}
\xdef\klockan{\the\count1.0\the\count255}
\xdef\klockan{\the\count1.\the\count255}\fi
\newcommand\nopf{\qed}   
\newcommand\set[1]{\ensuremath{\{#1\}}}
\newcommand\bigpar[1]{\bigl(#1\bigr)}
\newcommand\Bigpar[1]{\Bigl(#1\Bigr)}
\newcommand\lrpar[1]{\left(#1\right)}
\newcommand\Bigabs[1]{\Bigl|#1\Bigr|}
\newcommand\lrabs[1]{\left|#1\right|}
\def\rompar(#1){\textup(#1\textup)}    
\newcommand\xfrac[2]{#1/#2}
\newcommand\parfrac[2]{\lrpar{\frac{#1}{#2}}}
\def\xexp(#1){e^{#1}}
\newcommand\ceil[1]{\lceil#1\rceil}
\newcommand\floor[1]{\lfloor#1\rfloor}
\newcommand\ntoo{\ensuremath{{n\to\infty}}}
\newcommand\iid{i.i.d.\spacefactor=1000}
\newcommand\ie{i.e.\spacefactor=1000}
\newcommand\cf{cf.\spacefactor=1000}
\newcommand\whp{{w.h.p.\spacefactor=1000}}
\newcommand\whpx{w.h.p} 
\newcommand\wvhp{\whp}
\newcommand{\tend}{\longrightarrow}
\newcommand\pto{\overset{\mathrm{p}}{\tend}}
\newcommand\bbR{\mathbb R}
\newcounter{CC}
\newcommand{\CC}{\stepcounter{CC}\CCx} 
\newcommand{\CCx}{C_{\arabic{CC}}}     
\newcommand{\CCdef}[1]{\xdef#1{\CCx}}     
\newcounter{cc}
\newcommand{\cc}{\stepcounter{cc}\ccx} 
\newcommand{\ccx}{c_{\arabic{cc}}}     
\newcommand{\ccdef}[1]{\xdef#1{\ccx}}     
\newcommand\E{\operatorname{\mathbb E{}}}
\renewcommand\P{\operatorname{\mathbb P{}}}
\newcommand\Var{\operatorname{Var}}
\newcommand\Cov{\operatorname{Cov}}
\newcommand\Bi{\operatorname{Bi}}
\newcommand\Bin{\operatorname{Bin}}
\newcommand\Be{\operatorname{Be}}
\newcommand\ga{\alpha}
\newcommand\gb{\beta}
\newcommand\gd{\delta}
\newcommand\gam{\gamma}
\newcommand\gl{\lambda}
\newcommand\eps{\varepsilon}
\newcommand\cA{\mathcal A}
\newcommand\cS{{\mathcal S}}
\newcommand\ett[1]{\boldsymbol1[#1]}
\def\[#1]{[\![#1]\!]}
\newcommand\qqq{^{1/3}}
\newcommand\qw{^{-1}}
\newcommand\qww{^{-2}}
\renewcommand{\=}{:=}
\newcommand\gnd{\ensuremath{G(n,d)}}
\newcommand\gnx[1]{\ensuremath{G(n,#1)}}
\newcommand\gnxx[1]{\ensuremath{G_{n,#1}}}
\newcommand\hnxx[1]{\ensuremath{H_{n,#1}}}
\newcommand\cnxx[1]{\ensuremath{C_{n,#1}}}
\newcommand\rnxx[1]{\ensuremath{R_{n,#1}}}
\newcommand\gnp{\gnxx p}
\newcommand\hnp{\hnxx p}
\newcommand\cnp{\cnxx p}
\newcommand\Gnc{\gnxx {c/n}}
\newcommand\Hnc{\hnxx {c/n}}
\newcommand\Rnc{\rnxx {c/n}}
\newcommand\nn{[n]}
\newcommand\spr{\operatorname{spread}}
\newcommand\ellx{\nu}
\newcommand\nux{\bar\mu}
\newcommand\tZ{\tilde{Z}}
\newcommand\Cn{\mathsf{C}_n}
\newcommand\Kn{\mathsf{K}_n}
\newcommand\Pn{\mathsf{P}_n}
\newcommand\Kx{\mathsf{K}}
\newcommand\Qx{\mathsf{Q}}
\newcommand\Pc[1]{\operatorname{{\mathbb P}_c}\left\{#1\right\}}
\newcommand\Ec{\operatorname{{\mathbb E}_c}}
\newcommand\Varc{\operatorname{{Var}_c}}
\newcommand\colorwave[1][blue]{\bgroup \markoverwith{\lower3.5\p@\hbox{\sixly \textcolor{#1}{\char58}}}\ULon}
\font\sixly=lasy6 
\begin{document}
\title{On the spread of random graphs}

\date{9 August 2012} 

\author{Louigi Addario-Berry}
\address{Department of Mathematics and Statistics, McGill University, 805 Sherbrooke Street West, 
		Montr\'eal, Qu\'ebec, H3A 2K6, Canada}
\email{louigi@gmail.com}
\urladdr{http://www.math.mcgill.ca/~louigi/}

\author{Svante Janson}
\address{Department of Mathematics, Uppsala University, PO Box 480,
SE-751~06 Uppsala, Sweden}
\email{svante.janson@math.uu.se}
\urladdr{http://www.math.uu.se/~svante/}

\author{Colin McDiarmid}
\address{Department of Statistics, University of Oxford,
1 South Parks Road, Oxford OX1 3TG, UK}
\email{cmcd@stats.ox.ac.uk}
\urladdr{http://www.stats.ox.ac.uk/~cmcd/}

\keywords{Random graphs, supercritical, random regular graphs, small world, spread of a graph, Lipschitz functions}
\subjclass[2000]{60C05}

\begin{abstract}
  The \emph{spread} of a connected graph $G$ was introduced by Alon,
  Boppana and Spencer~\cite{alon1998aii} and measures how tightly connected the graph is.
  It is defined as the maximum over all Lipschitz functions $f$ on $V(G)$ of the variance
  of $f(X)$ when $X$ is uniformly distributed on $V(G)$.
  We investigate the spread for certain models of sparse random graph;
  in particular for random regular graphs $\gnd$, 
  for Erd\H{os}-R\'enyi random graphs $\gnp$ in the supercritical range $p>1/n$,
  and for a `small world' model.
  For supercritical $\gnp$, we show that if $p=c/n$ with $c>1$ fixed then
  with high probability the spread of the giant component is bounded, and we prove corresponding statements for
  other models of random graphs, including a model with random edge-lengths. 
  We also give lower bounds on the spread for the barely supercritical case when $p=(1+o(1))/n$.
  Further, we show that for $d$ large, with high probability the spread of $\gnd$ becomes arbitrarily close
  to that of the complete graph $\Kn$. 
\end{abstract}
  
\maketitle


\section{Introduction}\label{S:intro}

  If $G$ is a graph, a \emph{Lipschitz function} $f$ on $G$ is a real-valued
  function defined on the vertex set $V(G)$ such that
  $|f(v)-f(w)|\le 1$ for every pair of adjacent vertices $v$ and $w$.
  We may regard a function $f:V(G)\to\bbR$ on a graph $G$ as a random
  variable by evaluating $f$ at a random, uniformly distributed, vertex.
  We may thus talk about the mean, median and variance of $f$.
  For example, if $G$ has $n$ vertices, the mean $\E f$ is $\sum_v f(v)/n$, and the variance of $f$ is
\begin{equation} \label{eqn.var}
  \frac1{n} \sum_{v} (f(v) - \E f)^2 = \frac1{2n^2} \sum_{v,w} (f(v)-f(w))^2.
\end{equation}

  For a fixed connected graph $G$, we define the \emph{spread} of $G$ 
  to be the supremum of the variance of $f$ 
  over all Lipschitz functions $f$ on $G$, 
  and we denote this quantity by  $\spr(G)$.
  (Note that the supremum would be infinite if we considered a disconnected graph.)
  The spread of a graph was introduced by Alon, Boppana and Spencer in~\cite{alon1998aii},
  and considered further in~ \cite{bht2000}.
  In particular it is shown in~\cite{alon1998aii} that the spread yields the
  optimal coefficient in the exponent in a natural asymptotic isoperimetric inequality:
  we discuss this briefly below.
  The spread is a natural measure of the overall connectivity of a graph, and the
  purpose of this paper is to investigate the spread for certain models of random graph.

\subsection{The spread of a graph}

  Before we introduce our results concerning random graphs,
  let us give some background on the spread of a graph.
  Observe first that spread is an edge-monotone function in the sense that
  if we add an edge to a graph then the set of Lipschitz functions becomes smaller,
  and thus the spread becomes smaller or remains the same.
  
  For every connected graph $G$, $\spr(G)$ is attained, so we can replace
  supremum by maximum. In fact, it is shown in Theorem~2.1 of~\cite{alon1998aii}
  that there is always a Lipschitz function $f$ achieving $\spr(G)$ which is
  integer-valued and of the  \kolla
  following simple form: if $S$ denotes the set of vertices $v$ with $f(v)=0$,
  then each component $H$ of $G \setminus S$ has a sign $g(H)= \pm 1$,
  and for each vertex $v$ in such an $H$, $f(v)$ is $g(H)$ times the graph distance between $v$ and $S$.
  An integer-valued Lipschitz function on $G$ may be regarded as a homomorphism from $G$ to a suitably long
  path with a loop at each vertex, and $\spr(G)$ measures how widely distributed along the path we can make the images
  of the vertices of $G$.
  
It is easy to see that the complete graph $\Kn$ has spread $1/4$
if $n$ is even and $1/4 - 1/(4n^2)$ if $n$ is odd. This of course
gives the minimum possible values of the spread for graphs of order $n$.
The maximum is $(n^2-1)/12$, attained by the path $\Pn$.

Denote the graph distance between vertices $u$ and $v$ by $d_G(u,v)$,
  and let ${\rm diameter}(G)$ be the maximum value of $d_G(u,v)$.
  It is easily seen from (\ref{eqn.var}) that
 $\spr(G) \leq \frac14 \ {\rm diameter}(G)^2$, and similarly that
  $\spr(G) \leq \frac1{2n^2} \sum_{v,w} d_G(v,w)^2$, so the spread is at
  most half the mean squared distance between vertices. 
  Our results will imply that the spread is typically much smaller
for random graphs. 

  Given a list of graphs $G_1,\ldots,G_d$ the \emph{Cartesian product}
  $\prod_i G_i$ is the graph with vertex set
  $\prod_i V(G_i)$, in which two vertices $(u_1,\ldots,u_d)$ and $(v_1,\ldots, v_d)$ are adjacent if and only if
  they differ in exactly one co-ordinate $i$ and $u_i$ and $v_i$ are adjacent in $G_i$.
  It is implicit in~\cite{alon1998aii} and explicit in~\cite{bht2000} that, assuming the $G_i$ are connected,
$$\spr (\prod_i G_i) = \sum_i \spr(G_i).$$
  For example, the hypercube $\Qx^d$ is $\Kx_2^d$ (the product of $d$ copies of $\Kx_2$);
  and since $\spr(\Kx_2)=1/4$ we see that $\spr(\Qx^d)=d/4$.

  Alon, Boppana and Spencer in~\cite{alon1998aii} considered the case of a fixed connected graph $G$, 
  and were interested in tight isoperimetric inequalities concerning $G^d$ for large $d$.
  Given a graph $H$, a set $S$ of vertices of $H$ and $t>0$,
  let $B(S,t)$ denote the set of vertices at distance at most $t$ from $S$ (the $t$-{\em ball around} $S$);
  and let
$$ g(H,t) = \max_{|S| \geq |V(H)|/2} \frac{|V(H) \setminus B(S,t)|}{|V(H)|}$$
  where the maximum is over subsets $S$ of at least half the vertices of $H$.
  Thus $g(H,t)$ is the maximum proportion of vertices at distance $>t$ from a set of at least half the vertices.
  From Theorem 1.1 in~\cite{alon1998aii} (which gives a more general result) we have
\begin{theorem} \label{thm.abs} \kolla
  Let $G$ be a connected graph and let $\gam=\spr(G)$. 
Then for $d^{\frac12} \ll t \ll d$
$$ 
g(G^d,t) = e^{- \frac{t^2}{2d\gam} (1+o(1))} \qquad \mbox{as } d \to
\infty.
$$
\end{theorem}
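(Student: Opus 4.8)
The plan is to get matching lower and upper bounds on $g(G^d,t)$, in both cases transferring the question about the product graph to one about a sum of i.i.d.\ copies of a Lipschitz function on a single copy of $G$. Write $N=|V(G)|$ and let $X=(X_1,\dots,X_d)$ be uniform on $V(G^d)$, so the $X_i$ are i.i.d.\ uniform on $V(G)$; recall $d_{G^d}((u_i),(v_i))=\sum_i d_G(u_i,v_i)$ and that $(v_i)\mapsto\sum_i f(v_i)$ is $1$-Lipschitz on $G^d$ whenever $f$ is $1$-Lipschitz on $G$.

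For the lower bound I would fix an optimal $1$-Lipschitz $f$ on $G$, i.e.\ with $\Var f=\gam$, normalised so that $\E f=0$, and set $F(v)=\sum_i f(v_i)$, so $F(X)=\sum_i f(X_i)$ has mean $0$ and variance $d\gam$. Let $a$ be a median of $F(X)$; Chebyshev gives $|a|=O(\sqrt d)$. Taking $S=\{v:F(v)\le a\}$ (so $|S|\ge N^d/2$) and using that $F$ is $1$-Lipschitz on $G^d$, every $v$ with $F(v)>a+t$ lies outside $B(S,t)$, whence $g(G^d,t)\ge\P(F(X)>a+t)$. Since $f$ is bounded and $\sqrt d\ll a+t\ll d$, the moderate deviations estimate for sums of i.i.d.\ bounded variables yields $\P(F(X)>a+t)=\exp(-\tfrac{(a+t)^2}{2d\gam}(1+o(1)))=\exp(-\tfrac{t^2}{2d\gam}(1+o(1)))$, using $a=o(t)$.

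For the upper bound, let $S\subseteq V(G^d)$ with $|S|\ge N^d/2$ be arbitrary and put $h(v)=d_{G^d}(v,S)$, a $1$-Lipschitz function on $G^d$ with $|V(G^d)\setminus B(S,t)|/N^d=\P(h(X)>t)$. I would run the Doob martingale $M_k=\E[h(X)\mid X_1,\dots,X_k]$ along the coordinates. The key observation is that, conditionally on $X_1,\dots,X_{k-1}$, the map $x\mapsto\E[h(X)\mid X_1,\dots,X_{k-1},X_k=x]$ is a $1$-Lipschitz function on the \emph{single} graph $G$, since altering one coordinate to an adjacent vertex changes $h$ by at most $1$ uniformly in the remaining coordinates; hence $M_k-M_{k-1}$, conditionally on $\mathcal F_{k-1}=\sigma(X_1,\dots,X_{k-1})$, is centred with $|M_k-M_{k-1}|\le\mathrm{diam}(G)$ and conditional variance at most $\spr(G)=\gam$. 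A second-order expansion of the conditional m.g.f.\ then gives $\E[e^{\lambda(M_k-M_{k-1})}\mid\mathcal F_{k-1}]\le\exp(\tfrac12\gam\lambda^2(1+O(\lambda)))$ as $\lambda\to0$, and telescoping along the martingale yields $\E e^{\lambda(h(X)-\E h(X))}\le\exp(\tfrac12 d\gam\lambda^2(1+O(\lambda)))$. A crude Azuma bound (the median of $h(X)$ being $0$) gives $\E h(X)=O(\sqrt d)=o(t)$, and optimising $\lambda=t/(d\gam)\to0$ in the Chernoff bound with deviation $u=t-\E h(X)$ gives $\P(h(X)>t)\le\exp(-\tfrac{t^2}{2d\gam}(1+o(1)))$. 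Since $S$ is arbitrary this bounds $g(G^d,t)$ from above by the same quantity, completing the proof.

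The step I expect to be the main obstacle is obtaining the \emph{sharp} constant $\gam$ in the exponent, as opposed to a cruder constant such as $\mathrm{diam}(G)^2/4$ that a direct Azuma argument would produce. This hinges on two points: first, the per-coordinate martingale increments are precisely centrings of $1$-Lipschitz functions on a single copy of $G$, so their variances are bounded by $\spr(G)$ by the very definition of the spread; and second, because the Chernoff optimiser is $\lambda\asymp t/(d\gam)$, the hypothesis $t\ll d$ forces $\lambda\to0$, so only the variance term of each increment's cumulant generating function matters and the higher-order terms are absorbed into the $o(1)$. The matching lower bound relies on the same range $\sqrt d\ll t\ll d$: this is exactly the moderate-deviations window in which $\P(F(X)>\tau)\approx\exp(-\tau^2/(2d\gam))$ is accurate and in which the $O(\sqrt d)$ gaps between means, medians and the threshold $t$ are negligible.
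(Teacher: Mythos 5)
The paper does not actually prove this theorem: it is quoted directly from Alon, Boppana and Spencer (their Theorem~1.1), so there is no proof in the paper to compare against. Your argument is a correct self-contained proof, and in fact reproduces the essential ideas behind the ABS result. The two key points are both present and handled properly: for the lower bound, evaluating an optimal Lipschitz function at i.i.d.\ uniform coordinates turns the problem into moderate deviations for a sum of i.i.d.\ bounded variables of variance $\gam$; for the upper bound, the crucial observation is that the conditional increments of the coordinate Doob martingale are \emph{centred $1$-Lipschitz functions of a single uniform vertex of $G$}, so by the very definition of the spread their conditional variances are bounded by $\gam$. This is exactly what yields the sharp constant in the exponent: the bounded-differences constant $\mathrm{diam}(G)$ only enters the cubic-and-higher terms of the conditional cumulant generating function, which contribute $O(\lambda)$ to the exponent and are absorbed into the $o(1)$ because the Chernoff optimiser $\lambda \asymp t/(d\gam) \to 0$ under the hypothesis $t \ll d$; the hypothesis $t \gg \sqrt d$ similarly makes the $O(\sqrt d)$ mean--median gaps negligible. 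The one place where you invoke an external result without detail is the moderate-deviations \emph{lower} bound $\P(F(X)>a+t)=\exp(-\frac{(a+t)^2}{2d\gam}(1+o(1)))$; this is classical for i.i.d.\ bounded summands in the regime $\sqrt d \ll a+t \ll d$ (exponential tilting plus a CLT for the tilted law), and for a fully self-contained write-up it would deserve either a short proof or a citation (e.g.\ Petrov, or Dembo--Zeitouni's moderate deviation principle).
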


\subsection{Our results on the spread of random graphs}

  We use \whp{} (\emph{with high probability}) for events with probability $1-o(1)$ as \ntoo.
  Our focus is on whether or not the spread is bounded \whp{} in various models of sparse random graph.
  In these models typical degrees are small and \whp{} the mean path length is $\Theta(\log n)$
  (and so the mean squared path length is $\Omega(\log^2n)$): see for
  example Durrett~\cite{Durrett-2007} \kolla
  for results on diameter and mean path length, and the discussion at the
  end of \refS{Sgnp}. (We use $\log$ to denote natural logarithm, though often the base is
  irrelevant, as in $O(\log n)$.)
  \smallskip
  
  We start with random regular graphs $G(n,d)$ with fixed degree $d\ge3$, as that is the easiest case.
  It is well known that 
 \whp\ \gnd\ is connected \cite{Bollobas1981} and so we may talk of $\spr(\gnd)$.
  In \refS{Sregular} we show:
 %
\begin{theorem} \label{C1}
  There exists a constant $\CC\CCdef\CCti$ such that, for every fixed $d\ge3$,
\whp{} $\spr(\gnd)\le\CCx$.
\end{theorem}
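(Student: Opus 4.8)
The key structural fact to exploit is that a random regular graph $\gnd$ with $d\ge 3$ is \whp{} an expander: its edge-expansion (Cheeger) constant is bounded below by an absolute positive constant $h>0$ (this follows from the standard first-moment calculation on the configuration model, or may be quoted directly). The plan is to show that bounded expansion forces bounded spread, independently of $n$. So the first step is to reduce to the following deterministic statement: if $G$ is a connected graph on $n$ vertices with edge-expansion at least $h$ and maximum degree at most $d$, then $\spr(G)\le C(h,d)$ for some constant depending only on $h$ and $d$. Combined with the \whp{} expansion of $\gnd$, this gives the theorem with $\CCti$ depending only on $d$ (and in fact, since $h$ can be taken uniform over $d$, only on an absolute constant).

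To prove the deterministic statement, fix a Lipschitz function $f$ on $G$; by the structure result quoted from \cite{alon1998aii} we may assume $f$ is integer-valued, and after subtracting a constant (which does not change the variance) we may assume the median of $f$ is $0$, so that both level sets $\{f\ge 0\}$ and $\{f\le 0\}$ have size at least $n/2$. Write $A_k=\{v: f(v)\ge k\}$ for $k\ge 1$ and $A_k=\{v:f(v)\le k\}$ for $k\le -1$. Since $f$ is $1$-Lipschitz, every edge leaving $A_k$ (for $k\ge1$) goes to a vertex with $f$-value $k-1$, i.e.\ into $A_{k-1}\setminus A_k$; hence $e(A_k, V\setminus A_k)\le d\,|A_{k-1}\setminus A_k|$. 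On the other hand, because $|V\setminus A_k|\ge |V\setminus A_1|\ge n/2$ for $k\ge 1$, edge-expansion gives $e(A_k,V\setminus A_k)\ge h\,|A_k|$. Combining, $|A_k|\le (d/h)\,|A_{k-1}\setminus A_k| = (d/h)\,(|A_{k-1}|-|A_k|)$, so
\begin{equation*}
  |A_k| \le \frac{d/h}{1+d/h}\,|A_{k-1}| = \Bigpar{1-\frac{h}{h+d}}\,|A_{k-1}|.
\end{equation*}
Iterating from $k=1$ and using $|A_0|\le n$, we get $|A_k|\le n\,\rho^{k}$ with $\rho = 1-h/(h+d)<1$; the same bound holds for negative $k$ by the symmetric argument on the other median side.

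Now estimate the variance directly. Since $\E f$ differs from the median $0$ by at most, say, the standard deviation (or one may just note $\E f = \sum_{k\ge1}(|A_k|)/n - \sum_{k\le -1}|A_k|/n$ and bound it using the geometric decay), we have
\begin{equation*}
  \Var f \le \E f^2 = \frac1n\sum_v f(v)^2
  = \frac1n\sum_{k\ge 1}(2k-1)|A_k| + \frac1n\sum_{k\le -1}(2|k|-1)|A_k|
  \le 2\sum_{k\ge 1}(2k-1)\rho^{k},
\end{equation*}
using the standard identity $\sum_v f(v)^2 = \sum_{k\ge1}(2k-1)|\{f\ge k\}| + \sum_{k\le-1}(2|k|-1)|\{f\le k\}|$ for integer-valued $f$. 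The last series converges to a finite constant depending only on $\rho$, hence only on $h$ and $d$. This bounds $\spr(G)$ by that constant, completing the deterministic step and hence the theorem.

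The only genuine input beyond elementary manipulation is the expansion property of $\gnd$, which is classical; if one prefers a self-contained argument, a direct first-moment computation on the pairing model shows that \whp{} there is no set $S$ with $|S|\le n/2$ and $e(S,V\setminus S)<h|S|$ for a suitable small absolute $h>0$, and this is the step I would expect to write out most carefully (the routine but slightly delicate union bound over set sizes $s$ from $1$ to $n/2$). Everything else — the reduction to integer-valued $f$, the layer-cake identity, and the geometric summation — is bookkeeping. One subtlety worth a remark: the Cheeger-type bound I use compares the boundary of $A_k$ to $|A_k|$ only when $|A_k|\le n/2$, which is why it is convenient to center $f$ at its median rather than its mean; handling the (at most one) level $k$ where $|A_k|$ straddles $n/2$ costs only an additive $O(1)$ and does not affect the conclusion.
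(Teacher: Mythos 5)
Your proof is correct and follows essentially the same route as the paper: establish (via a known/first-moment result) that $\gnd$ is \whp{} an expander with Cheeger constant bounded below, then prove a deterministic lemma that expansion forces the level sets of a median-centered Lipschitz function to decay geometrically, and finally sum the tail to bound the variance. The only cosmetic difference is that you argue directly from edge-expansion plus the degree bound $d$, whereas the paper first converts edge-expansion to vertex-expansion (its Lemma~\ref{L2A}) and then runs the level-set contraction in vertex terms; the paper also phrases its deterministic step (Theorem~\ref{T1exp}) as an explicit exponential tail bound on $|\{v:|f(v)-m|\ge x\}|$ rather than just summing it into a variance bound, but the mechanism is identical.
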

 \noindent
 In fact we prove a stronger result, Theorem~\ref{T1}, giving an exponential tail inequality for Lipschitz functions;
 and we derive this from a corresponding deterministic result for expander graphs, Theorem~\ref{T1exp}.
 
\smallskip

  In \refS{Sgnc} we study the random graph $\Gnc$ with fixed $c>1$, the supercritical case.
  This random graph is \whp{} disconnected,
  so we consider the spread of the largest component of $\Gnc$, which we denote by $\Hnc$.
 (Recall that for $c>1$, there is \whp{} a unique giant component $\Hnc$
  of order $\sim \gam(c)n$ for some $\gam(c)>0$.)
  It was shown in~\citep{riordan08diam} that
  there is $f(c)>0$ such that the diameter of \hnp\ is $f(c)\log n+O_p(1)$.
  However, the spread stays bounded.

\begin{theorem} \label{C2}
  For each fixed $c>1$ there exists a constant $\CC=\CCx(c)>0$ such that \whp{} $\spr(\Hnc)\le\CCx$.
\end{theorem}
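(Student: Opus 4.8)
The plan is to reduce the theorem to a robust isoperimetric inequality for $\Hnc$ and then to establish that inequality from the known coarse geometry of the giant component.

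\emph{Reduction.} First I would show it suffices to prove that \whp{} there is a constant $A=A(c)$ so that, with $N=|V(\Hnc)|$, for every $S\subseteq V(\Hnc)$ with $|S|\ge N/2$ and every $t\ge0$,
\begin{equation}\label{iso-plan}
  \bigabs{\set{v\in V(\Hnc):\ d_{\Hnc}(v,S)>t}}\ \le\ A\,n\,e^{-t/A}.
\end{equation}
Indeed, for any Lipschitz $f$ on $\Hnc$ let $m$ be a median of $f$ for the uniform law on $V(\Hnc)$; then $\set{f\le m}$ and $\set{f\ge m}$ each have at least $N/2$ vertices, and since $f$ is Lipschitz, $\set{f>m+t}\subseteq\set{v:\ d_{\Hnc}(v,\set{f\le m})>t}$ and symmetrically for $\set{f<m-t}$. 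Applying \eqref{iso-plan} to $S=\set{f\le m}$ and $S=\set{f\ge m}$ gives $\P(|f-m|>t)=O(e^{-t/A})$, hence $\Var f\le\E[(f-m)^2]=\int_0^\infty 2t\,\P(|f-m|>t)\dd t=O(A^3)$; taking the supremum over $f$ bounds $\spr(\Hnc)$. (The structure theorem for extremal Lipschitz functions of \cite{alon1998aii} is not needed here; this is the same elementary argument that underlies \refT{T1exp}, now applied to a graph that is very far from bounded-degree.)

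\emph{Structure.} Next I would invoke the standard description of $\Hnc$ for fixed $c>1$: \whp{} it is its $2$-core $C$ together with a forest of pendant trees attached to $C$, and $C$ is a ``kernel'' multigraph $\Kx$ — minimum degree $\ge3$, with $\Theta(n)$ vertices and edges and a light-tailed degree distribution — whose edges are subdivided into \emph{bare paths}. Two quantitative inputs are needed, both \whp: (a) a uniform exponential tail on the thin part, $|\set{v:\ d_{\Hnc}(v,V(\Kx))>r}|\le B n\theta^r$ for constants $B$ and $\theta<1$ (this follows from the subcriticality of the trees dangling off $\Hnc$ and from an exponential tail on bare-path lengths, both routine branching-process estimates for $\Gnc$); and (b) vertex expansion of $\Kx$, i.e.\ every $W\subseteq V(\Kx)$ with $|W|\le|V(\Kx)|/2$ has $|\Gamma_{\Kx}(W)\setminus W|\ge\eps_0|W|$ for some $\eps_0=\eps_0(c)>0$ — a standard first-moment/switching computation in the configuration model, which moreover persists (with a worse constant, after discarding an $o(n)$ set) when one retains only the edges carried by short, $O(1)$-length bare paths.

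\emph{Proof of \eqref{iso-plan}.} Fix $S$ with $|S|\ge N/2$. Split the far set at threshold $t/2$: the vertices lying at $\Hnc$-distance $>t/2$ from $V(\Kx)$ number at most $Bn\theta^{t/2}$ by (a); every other vertex $v$ in the far set lies within $t/2$ of $V(\Kx)$, so map it to a nearest kernel vertex, which is then at distance $>t/2$ from $S$ in $\Hnc$. Expansion (b), applied along the short-bare-path skeleton starting from the $\ge N/2$ vertices already in the bulk near $S$, shows the complement of the $(t/2)$-ball around $S$ among kernel vertices shrinks geometrically, so it has size $\le A_0 n e^{-t/A_0}$; and since the total ``thin weight'' hanging off the kernel is $O(n)$ with light tails, the preimage of this set under the nearest-kernel-vertex map again has size $O(n e^{-t/A})$ (a Cauchy--Schwarz step absorbs the possible correlation between being far from $S$ and carrying much thin weight). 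Collecting constants gives \eqref{iso-plan}.

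\emph{Main obstacle.} The real difficulty is the coexistence of three disparate length scales: pendant trees, bare paths (which for $c$ near $1$ make up nearly all of the $2$-core and can be as long as $\Theta(\log n)$), and the expander kernel. A Lipschitz function is only $\ell$-Lipschitz across a bare path of length $\ell$, so one cannot push the expander concentration estimate directly from $\Kx$ to $\Hnc$; routing everything through the volume-only statement \eqref{iso-plan} is precisely the device that avoids transporting Lipschitz functions across distorted edges. Making the scales interact cleanly — in particular obtaining the expansion bound \emph{uniformly} over all admissible $S$ at once, so that no union bound over subsets of $V(\Hnc)$ is required, and controlling the thin weight uniformly over scales $r$ and $t$ — is where essentially all the work lies; the individual ingredients (subcritical branching estimates for the trees and bare paths, first-moment expansion of the configuration model) are by now standard.
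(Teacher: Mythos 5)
Your blueprint parallels the paper's proof at a high level — both isolate an "expander core" of $\Hnc$ and treat the remaining vertices as light-tailed attachments — but the specific decomposition differs, and the key expansion input is over-claimed. The paper does not work with the kernel $\Kx$; it appeals to the Benjamini--Kozma--Wormald decomposition (\refL{LT2}), in which $\Hnc$ has a subgraph $F$ with a lower bound on its \emph{edge} expansion (Cheeger constant $\Phi(F)\geq\alpha$), and the components of $H\setminus V(F)$ (the ``decorations'') have exponentially small size tails and attach to $F$ boundedly. The paper's deterministic Lemma~\ref{lem.decexp} then converts edge expansion to vertex expansion using the degree tail (P3), and precisely because the fraction of edge-boundary consumed by high-degree vertices grows as the level sets shrink, the resulting tail is $e^{-c\sqrt x}$, \emph{not} $e^{-x/A}$ as you assert in~\eqref{iso-plan}. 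Your stronger decay rate would follow only from a uniform \emph{vertex}-expansion statement for the kernel (or for your short-bare-path skeleton), which is not what BKW give and which you dismiss as a ``standard first-moment/switching computation.'' It is not obviously standard: the kernel has unbounded degrees with only an exponential tail, so bounding vertex boundary from edge boundary costs exactly the logarithmic factor that the paper absorbs into the $\sqrt x$. (For the purpose of Theorem~\ref{C2} this is not fatal — both decay rates give bounded variance — but it means the heart of your proof is an unproved, and possibly false, lemma.)

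There are two further gaps. First, you do not address the issue the paper explicitly flags at the start of its proof of Lemma~\ref{lem.decexp}: the half $S$ of $V(\Hnc)$ that you start from may consist almost entirely of thin material (pendant trees and long bare paths), so it is \emph{not} automatic that you have $\Theta(n)$ kernel vertices close to $S$ to seed the expansion iteration. The paper needs (DE2$'$) and (DE3) to manufacture such a seed set inside $F$; your sketch simply asserts ``the $\geq N/2$ vertices already in the bulk near $S$.'' Second, the thinning of the kernel to only short bare paths and the claim that vertex expansion ``persists after discarding an $o(n)$ set'' needs care: once you discard vertices, expansion for sets $W$ meeting the discarded part is not inherited for free, and the discarded set could well be concentrated exactly where your iteration lives. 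The Cauchy--Schwarz step at the end is fine in principle but also needs the second-moment bound on the thin weights, which you do not establish. In short: the reduction to an isoperimetric inequality and the rough plan are sound, but the load-bearing vertex-expansion claim and the ``seed set'' argument are the genuine content, and both are missing.
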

  \noindent
   As with Theorem~\ref{C1} above, we actually prove a stronger result,
   Theorem~\ref{T3}, 
   giving a tail inequality for Lipschitz functions which is exponential in
   $\sqrt{n}$. 
\smallskip

  In \refS{Sgnp} we study the random graph $\Gnc$ in the barely supercritical case
  when $c=1+\eps$,
  and show that the spread tends to infinity (in probability)
  at least at the rate~$\eps\qww$.
\begin{theorem}\label{behaves}
  Let $p=(1+\eps)/n$ with $\eps = \eps(n) \to 0$ and $\eps^3 n \to \infty$ as $n \to \infty$.
  Then \whp\ the giant component $H_{n,p}$ of $\gnp$ satisfies $\; \spr(H_{n,p}) = \Omega(1/\eps^2)$.
\end{theorem}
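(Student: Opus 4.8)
The plan is to exhibit a single Lipschitz function on $H:=H_{n,p}$ whose variance is already of order $1/\eps^2$; since $\spr(H)$ is the supremum of such variances over Lipschitz $f$, this suffices. Let $C$ be the $2$-core of $H$ and set $f(w)=d_H(w,C)$, the graph distance from $w$ to $C$. Distance to a fixed vertex set is always $1$-Lipschitz, so $f$ is a Lipschitz function on $H$, and it remains to prove that, for $X$ uniform on $V(H)$, \whp\ $\Var f(X)=\Omega(1/\eps^2)$. The key feature of $f$ is that $f(w)$ is exactly the depth of $w$ in the pendant tree containing it: $H\setminus C$ is a forest each of whose components is a tree attached to $C$ at a single vertex, and if $w$ lies in such a tree with attachment vertex $u\in C$ then $u$ separates that tree from the rest of $H$, so $d_H(w,C)=d_H(w,u)$ equals the depth of $w$ in its tree.

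I will invoke the standard structural description of the barely supercritical giant, which holds precisely because $\eps^3 n\to\infty$ (see \cite{riordan08diam} and the references therein): \whp\ $|V(H)|=\Theta(\eps n)$ and $|C|=\Theta(\eps^2 n)$, and, conditionally on $C$, the pendant forest $H\setminus C$ is---up to the negligible dependence forced by using a prescribed number of ``leftover'' vertices---a family of independent Galton--Watson trees, one rooted at each vertex of $C$, with offspring law $\Po(\mu)$ for some $\mu=1-\eps+O(\eps^2)<1$. Since $1-\mu=\Theta(\eps)$, a first-moment computation gives, for every $t\ge0$,
\[
\E\,\#\{w\in V(H)\setminus C:\ f(w)\ge t\}\;=\;|C|\sum_{d\ge t}\mu^{d}\;=\;\Theta(\eps^2 n)\cdot\frac{\mu^{\lceil t\rceil}}{1-\mu}\;=\;\Theta(\eps n)\,e^{-\Theta(\eps t)}.
\]
Fix an absolute constant $\alpha$ large enough that the right-hand side at $t=\alpha/\eps$ is at most $\tfrac14\E|V(H)|$; then at $t=3\alpha/\eps$ it is still at least $\delta\,\E|V(H)|$ for some absolute $\delta>0$. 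Put $A=\{w\in V(H): f(w)\le \alpha/\eps\}$ and $B=\{w\in V(H): f(w)\ge 3\alpha/\eps\}$; these are disjoint.

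For concentration, condition on $C$: then $\#\{w\notin C:\ f(w)\ge t\}$ is a sum over $v\in C$ of independent terms, the one at $v$ being at most the size $|T_v|$ of the Galton--Watson tree at $v$. Since $\E|T|^2=\Theta\bigl((1-\mu)^{-3}\bigr)=\Theta(\eps^{-3})$, a routine second-moment estimate shows that the variance of this count is $o\bigl((\eps n)^2\bigr)$---and this is the one place where the hypothesis $\eps^3 n\to\infty$ is needed. Combining this with $|V(H)|=\Theta(\eps n)$ and Chebyshev's inequality, \whp\ $\min\{|A|,|B|\}\ge\delta'|V(H)|$ for some absolute $\delta'>0$. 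Finally, write $\mu_0=\E f(X)$. If $\mu_0\le 2\alpha/\eps$, then using $f\ge 3\alpha/\eps$ on $B$,
\[
\Var f(X)\ \ge\ \frac{|B|}{|V(H)|}\Bigl(\tfrac{3\alpha}{\eps}-\tfrac{2\alpha}{\eps}\Bigr)^{2}\ \ge\ \delta'\Bigl(\tfrac{\alpha}{\eps}\Bigr)^{2};
\]
and if $\mu_0>2\alpha/\eps$, then using $f\le \alpha/\eps$ on $A$, likewise $\Var f(X)\ge\delta'(\alpha/\eps)^2$. In either case $\spr(H_{n,p})\ge\Var f(X)=\Omega(1/\eps^2)$, as required.

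The step I expect to be the main obstacle is making the structural input precise: one must know that the barely supercritical $2$-core has $\Theta(\eps^2 n)$ vertices and that its pendant forest is close enough to an independent family of $\Po(1-\Theta(\eps))$ Galton--Watson trees that the first- and second-moment estimates above are valid---in particular one must control the mild dependence between pendant trees coming from the fixed count of leftover vertices, and establish $\E|T|^2=\Theta(\eps^{-3})$, which is exactly what dictates the hypothesis $\eps^3 n\to\infty$. All of this is by now standard in this regime, so the real content lies in assembling these facts; the variational argument itself is elementary.
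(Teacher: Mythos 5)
Your proof is correct in outline but takes a genuinely different route from the paper's. You use the canonical Lipschitz function $f(w)=d_H(w,C)$ (distance to the $2$-core), and obtain the $\Theta(1/\eps)$ scale from the depth profile of the pendant forest, which you argue is close to independent $\Po(1-\Theta(\eps))$ Galton--Watson trees rooted at $V(C)$. The paper instead builds a function that vanishes on the pendant forest and all of the kernel, and oscillates with period $\Theta(1/\eps)$ along the long core paths between kernel vertices; it extends that function to pendant trees by the constant value at the attachment point. These two constructions exploit disjoint sources of randomness: in the paper's proof, once the core and the (unlabelled) pendant trees are conditioned on, the relevant random variables are the attachment points, which are exactly independent and uniform on $V(C)$, so concentration needs only Pavlov's theorem on the maximum pendant tree plus the trivial bound $\sum t_i^2\le(\max_i t_i)\sum_i t_i$. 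In your proof the function is entirely determined by the tree shapes (it ignores the attachment points), so the concentration must come from the distribution of tree sizes in a uniform random rooted forest with $M=|V(C)|$ roots and $m=v(H)-M$ non-roots; this is what forces you to pass through the Galton--Watson approximation and its second-moment estimate $\E|T|^2=\Theta(\eps^{-3})$.

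The one place your writeup is materially looser than the paper is exactly the spot you flag yourself: conditionally on $C$ and $v(H)$, the pendant trees are a \emph{uniform rooted forest with fixed root and non-root counts}, not literally independent Galton--Watson trees, and the dependence induced by the fixed total size is not automatically negligible in a variance computation. To close this, you can either (a) invoke a local-CLT/size-conditioning argument for Poisson Galton--Watson forests in the regime $m/M\to\infty$, $m/M^2\to 0$ (which is precisely $\eps^3n\to\infty$, consistent with the paper's remark in the proof of Lemma~\ref{Lmaxtree} that $m/M^2\le(\eps^3n)^{-1}$), to transfer first and second moments from the unconditioned model; or (b) use negative association of the component sizes in a uniform random forest, so that the variance of $\sum_{v\in C}N_v^{(t)}$ is bounded by the sum of marginal variances, and then compute each marginal via the Galton--Watson approximation. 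The paper avoids this entirely by choosing a Lipschitz function whose randomness lies in the attachment points, for which exact independence is immediate. Both proofs hinge on $\eps^3n\to\infty$ in the concentration step; yours arguably uses a more ``natural'' function, while the paper's is more self-contained given the structural inputs it already has.
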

\noindent
  We do not have matching upper bounds here,
  but there are precise results on the diameter 
  which yield upper bounds that complement (though do not quite match) the lower bound. 
  For $\eps$ as here,
  \whp{} the diameter of $H_{n,(1+\eps)/n}$ is $(3+o(1))\eps^{-1} \log(\eps^3 n)$,
  see Ding, Kim, Lubetsky and Peres~\cite{DKLP10} and Riordan and Wormald~\cite{riordan08diam};
  and it follows immediately that 
  \whp{} $\spr(H_{n,p}) = O(\eps^{-2}\log^2(\eps^3 n))$,
  which is within a $\log^2(\eps^3 n)$ factor of the lower bound in Theorem \ref{behaves}.
\smallskip

  In \refS{sec.large} we study the random regular graph $\gnd$ in the large-$d$ case.
  As noted above, the 
  spread of a connected $n$-vertex graph is
  always at least $\spr(\Kn)\ge1/4-1/(4n^2)$.  Indeed, for graphs with
  bounded average degree the spread is bounded above $1/4$ 
   -- see Proposition~\ref{lem.avdeg}.  However, we see that:
  %
\begin{theorem} \label{thm.reghighdeg}
  For each $\eps>0$ there exists a constant $d_0$ such that for each $d \geq d_0$ we have \whp\ $\spr(\gnd) < 1/4 + \eps$.
\end{theorem}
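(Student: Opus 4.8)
The plan is to compare a near-optimal Lipschitz function on $\gnd$ to one on $\Kn$ by controlling how far apart the function can push most of the vertices. Since $\spr$ is edge-monotone, we cannot hope to bound it by $\spr(\Kn)$ from above directly; instead we will argue that a random $d$-regular graph has, \whp{}, such strong expansion that any Lipschitz function $f$ must be nearly constant on all but a tiny set of vertices. Concretely, fix $\eps>0$ and a small $\delta>0$ to be chosen in terms of $\eps$. We will show that \whp{} every Lipschitz $f$ on $\gnd$ has the property that for the median $m$ of $f(X)$, the set $A_k=\{v:|f(v)-m|\ge k\}$ has $|A_k|\le \delta n$ for some fixed small integer $k=k(\delta)$; in fact we expect $|A_k|$ to decay geometrically in $k$ once $d$ is large. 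Granting this, we split the variance computation in~\eqref{eqn.var} using the median: writing $\Var f\le \frac1n\sum_v(f(v)-m)^2$, the vertices with $|f(v)-m|\le k$ contribute at most a quantity tending to $1/4$ as the graph distances among them are forced to be essentially $1$ (using that, \whp{}, $\gnd$ has very few short cycles and most pairs are ``far'' only in a way that cannot be exploited by a Lipschitz function confined to a narrow band), while the at most $\delta n$ exceptional vertices contribute $O(\delta\,\mathrm{diameter}(\gnd)^2)=O(\delta\log^2 n)$. This last term does not obviously go to zero, so the real content is to show the exceptional contribution is genuinely $O(\delta)$, not $O(\delta\log^2 n)$.

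To get that, the key step is an isoperimetric/expansion estimate: \whp{} in $\gnd$, for every set $S$ with $n/3\le|S|\le 2n/3$ the edge boundary is at least $c_0 d\,|S|$ for an absolute constant $c_0>0$ (standard for random regular graphs via first-moment/switching arguments), and more importantly the $t$-neighbourhood of any set of size $\ge \delta n$ covers all but $o(n)$ vertices for $t=O(\log(1/\delta))$ \emph{uniformly}. Combined with the Lipschitz property, for any Lipschitz $f$ let $S_+=\{v:f(v)\ge m\}$ and $S_-=\{v:f(v)\le m\}$, one of which, say $S_+$, has size $\ge n/2$; then $f(v)\le m + d_G(v,S_-)$ and a robust expansion bound shows $\sum_v d_G(v,S_-)^2$ is dominated by vertices within bounded distance of $S_-$, because the number of vertices at distance exactly $j$ from $S_-$ shrinks by a constant factor each step once $d\ge d_0(\eps)$. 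This is the mechanism by which large $d$ enters: the vertex expansion ratio $(d-1)$ overwhelms the ``surface-to-volume'' loss, so the tail $|A_k|$ decays like $(1-c_1)^k n$ and $\sum_k k|A_k|/n\to 0$ as $d\to\infty$. Quantitatively one aims for $\frac1n\sum_v(f(v)-m)^2 \le \frac1n|A_0\setminus A_1|\cdot 1 + \sum_{k\ge1}(2k+1)|A_k|/n$, and the second sum is $<\eps/2$ for $d\ge d_0$.

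The remaining step is to handle the ``main band'': vertices with $|f(v)-m|\le 1$ (or a bounded integer), where we must see the contribution is at most $1/4+\eps/2$. Here note that among such vertices $f$ takes values in a set of $O(1)$ consecutive integers, and the variance of a random variable supported on an interval of length $L$ is at most $L^2/4$; the $L=1$ case gives exactly $1/4$, and we absorb the contribution of any band wider than $1$ into the geometric-tail bound from the previous paragraph by choosing $k$ large. Thus the whole estimate becomes $\spr(\gnd)\le 1/4+\eps$ once $d\ge d_0(\eps)$.

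The main obstacle is the uniformity of the expansion estimate over \emph{all} Lipschitz functions (equivalently all vertex subsets), with explicit enough constants that the exceptional contribution beats the $\Theta(\log^2 n)$ diameter bound — a union bound over subsets is expensive, so one wants the first/second-moment bounds on small-set and medium-set edge-expansion in $\gnd$ to be strong enough (failure probability $e^{-\omega(n)}$ for medium sets, and $e^{-\omega(\log n)}$ per small set with only $n^{O(1)}$ small sets to consider) that a single clean union bound suffices. I expect the proof to reuse the expander-graph machinery behind Theorem~\ref{T1exp}/Theorem~\ref{T1} almost verbatim, the only new ingredient being the careful bookkeeping that turns ``spread is bounded'' into ``spread is within $\eps$ of $1/4$'' by tracking how the per-layer decay constant improves with $d$.
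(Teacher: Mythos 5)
Your plan matches the paper's architecture exactly: prove that for any target $\beta$, once $d$ is large enough $\gnd$ \whp{} has strong vertex expansion (in the paper, $(\beta,\eta)$-expansion, established via a configuration-model first-moment union bound over all set sizes in Lemma~\ref{lem.regbigexp}); deduce geometric decay $|V_{\geq i}|\lesssim \beta^{-i}n$ of the level sets of any Lipschitz $f$ about its median (Lemma~\ref{lem.Ss}); and feed that into a variance bound (Lemma~\ref{lem.betaspread}). The union-bound worry you raise in your last paragraph is real but handled by the standard first-moment calculation; the diameter never enters.

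One piece of your reasoning is genuinely off and worth flagging. You write that vertices with $|f(v)-m|\le k$ "contribute at most a quantity tending to $1/4$ as the graph distances among them are forced to be essentially $1$." That is not so: graph distances in $\gnd$ are $\Theta(\log n)$, and a band of width $k$ can by itself give variance of order $k^2$, so nothing is forced by short cycles or short distances. The relevant fact is about level sets, not distances, and your later $L=1$ remark is the right instinct but the bookkeeping ("absorb any band wider than $1$ into the tail by choosing $k$ large") does not quite parse, because a band of width $2$ already costs $1$, not $1/4$. The paper's clean version is: after shifting so that the median is $0$ and (by replacing $f$ with $-f$ if necessary) $|V_{\geq 1}|\geq|V_{\leq -1}|$, bound
\begin{equation*}
\Var(f)\le \E\,\bigl|f-\tfrac12\bigr|^2 \le \tfrac14 + \sum_{i\neq 0,1}\frac{|V_i|}{n}\bigl(i-\tfrac12\bigr)^2 = \tfrac14+O(\beta^{-1}),
\end{equation*}
so $V_0$ and $V_1$ together contribute exactly $1/4$ and the tail starts at $i=2$ and $i=-1$, not at some large $k$. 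With that correction your sketch becomes the paper's proof.
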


  In \refS{sec.sw} we study a basic `small world' model $R_{n,c/n}$ of a random graph, following
  Watts and Strogatz~\cite{Watts-Strogatz-1998}
  and Newman and Watts~\cite{Newman-Watts-1999}, see also for example Durrett~\cite{Durrett-2007}.
  We start with a cycle on the vertices $1,\ldots,n$ (with each $i$ and $i+1$ adjacent, where $n+1$ means 1).
  Then the other possible `short-cut' edges are added independently with probability $c/n$.

\begin{theorem} \label{thm.sw1}  
  For each fixed $c>0$ there exists a constant $\CC=\CCx(c)$ such that
  \whp{} $\spr (R_{n,c/n}) \leq \CCx$. \CCdef\CCsw
\end{theorem}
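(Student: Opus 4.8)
\emph{Proof strategy.}
Write $R=\Rnc$ and, for a connected graph $G$, set
$\beta(G,t)=\max\bigl\{\,\bigabs{V(G)\setminus B(S,t)}:S\subseteq V(G),\ |S|\ge|V(G)|/2\,\bigr\}$,
the largest number of vertices that can lie at distance more than $t$ from a set containing at least half the vertices. The first move is to bound $\spr$ by $\beta$. Let $f$ be Lipschitz on $G$ and let $m$ be a median of $f$; then $\{f\ge m\}$ and $\{f\le m\}$ each have at least $|V(G)|/2$ vertices, and since $f$ is $1$‑Lipschitz the set $\{f>m+t\}$ is disjoint from $B(\{f\le m\},t)$ and $\{f<m-t\}$ from $B(\{f\ge m\},t)$, so $\bigabs{\{v:|f(v)-m|>t\}}\le 2\beta(G,t)$. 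Using $\Var(f)\le|V(G)|^{-1}\sum_v(f(v)-m)^2=|V(G)|^{-1}\int_0^\infty 2t\,\bigabs{\{v:|f(v)-m|>t\}}\,\dd t$ gives
\[
  \spr(G)\ \le\ \frac{4}{|V(G)|}\int_0^\infty t\,\beta(G,t)\,\dd t .
\]
Since $\beta(R,t)=0$ once $t$ exceeds $\operatorname{diameter}(R)=O(\log n)$ \whp, it now suffices to prove that \whp\ $\beta(R,t)\le C_1 n\,e^{-c_2 t}$ for all $t$, with $C_1,c_2>0$ depending only on $c$.

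Next I would coarsen. Fix $L=L(c)$ large enough that $cL$ exceeds a universal constant $K_0$, cut the cycle into $m=\lceil n/L\rceil$ consecutive arcs $B_1,\dots,B_m$ of length $L$ or $L+1$, and let $\hat R$ be the graph on $\{B_1,\dots,B_m\}$ with $B_i\sim B_{i+1}$ and one edge $B_iB_j$ for every short‑cut of $R$ joining those two arcs. Then $\hat R$ is a Hamilton cycle $C_m$ together with a random (multi)graph in which each vertex emits about $\Po(cL)$ half‑edges to near‑uniform arcs; in other words $\hat R$ has the structure of the giant component of a supercritical $G(m,cL/m)$, already spanned by a Hamilton cycle. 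Two observations make the coarsening useful. First, any $S$ with $|S|\ge n/2$ can miss at most $n/(2L)\le m/2$ of the arcs, so $S':=\{i:B_i\cap S\neq\varnothing\}$ satisfies $|S'|\ge m/2$. Second, lifting an $\hat R$‑geodesic to $R$ shows that one step of $\hat R$ (a cycle step, or a short‑cut) can be realised by at most $3L$ steps of $R$ staying in the relevant arcs, so $d_R(v,S)\le 3L\,(d_{\hat R}(i,S')+1)$ whenever $v\in B_i$. Hence $\beta(R,t)\le (L+1)\,\beta(\hat R,\lfloor t/(3L)\rfloor-1)$, and it is enough to prove $\beta(\hat R,s)\le C_1' m\,e^{-c_2' s}$ for all $s$ \whp.

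Finally I would analyse $\hat R$ exactly as the supercritical random graph $\Hnc$ is analysed in \refS{Sgnc}. Because $cL$ is a large constant, the graph obtained from $\hat R$ by suppressing its bare paths — the maximal runs of degree‑$2$ vertices, i.e.\ of short‑cut‑free arcs, each replaced by a single edge — is \whp\ an expander in the sense needed for \refT{T1exp}: suppression leaves minimum degree $\ge 3$, and since the number of short‑cut half‑edges leaving any set of arcs of size $x\le m/2$ has mean of order $cL\cdot x$, a suitably refined first‑moment argument shows that the $1$‑ball of every such set grows by a constant factor, across the whole range of set sizes. On the expander, the erosion mechanism behind \refT{T1exp} (once a set covers at least half the vertices, its complement shrinks by a constant factor at each further step) gives the required exponential decay. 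The bare paths, though a single one can have length $\Theta(\log m)$, satisfy $\sum_P\ell_P^{3}=O(m)$ \whp\ — short‑cut‑free arcs are rare, so long runs are very rare — and they are reinstated at a cost of only $O(1)$, precisely the accounting used for the bare paths of $\Hnc$ in the proof of \refT{C2}. Combining these yields $\beta(\hat R,s)\le C_1' m\,e^{-c_2' s}$, hence $\beta(R,t)\le C_1 n\,e^{-c_2 t}$, hence $\spr(R)=O(1)$, \whp.

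The step I expect to be the real obstacle is the last one: getting the exponential decay of $\beta(\hat R,\cdot)$ \emph{uniformly over all} half‑sized sets simultaneously. For very large and for very small sets a crude union bound is enough, but in the intermediate range one must use that the sets actually arising are connected remnants of an erosion (equivalently, one must run the bare‑path decomposition carefully) — exactly the difficulty that forces the supercritical case to go through the dedicated tail inequality \refT{T3} rather than the expander theorem, and the same work is what is needed here.
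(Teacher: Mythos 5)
Your strategy is recognisably an alternative to the paper's: you coarsen the cycle into $O(1/c)$-length arcs (analogous to the paper's Step~1 contraction to reduce to $c\ge 2$), and you then propose to analyse the coarsened graph $\hat R$ directly as a Hamilton cycle overlaid with something like a supercritical $G(m,cL/m)$. The paper's Step~2 takes a different and cleaner route: it does not re-derive decorated-expander structure for the coarsened small-world graph. Instead it exposes $R_{n,c/n}$ as $\Gnc$ plus an \emph{independent} uniformly random Hamilton cycle, conditions on the giant component $H$ of $\Gnc$ being a well-behaved weak $\alpha$-decorated expander (\refL{LT2+}, borrowed directly from the $\Gnc$ analysis), and then shows that the paths hung off $H$ by the Hamilton cycle can be absorbed as new decorations. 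The only new work is to verify that the new decoration sizes retain an exponential tail, which the paper does via negative association of the inter-arrival counts $(X_w)$, a geometric stochastic bound, and Chernoff estimates. After that it simply invokes the already-proved \refL{lem.decexp}.

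The genuine gap in your proposal is precisely the step you flag as the obstacle, and it is not a minor one. Showing that $\hat R$ is (or can be reduced to) a weak decorated expander is exactly the kind of structure theorem that, in the paper, is imported wholesale from \cite{BKW} via \refL{LT2}. You would need to re-establish something comparable for $\hat R$: identify the expander core, bound the number and total mass of bare paths, control the degree distribution, and prove the Cheeger-type estimate uniformly across all set sizes up to $m/2$. None of this is carried out in your sketch. A secondary issue is that your claimed decay $\beta(\hat R,s)\le C_1' m\,e^{-c_2' s}$ is probably too strong: exactly the degree-distribution phenomenon that forces the paper's \refL{lem.decexp} to give only a stretched-exponential $e^{-c\sqrt{x}}$ bound (see the remark there about high-degree vertices dominating $\sum_{v\in F_{>\lambda}}d_F(v)$ as the sets shrink) applies to $\hat R$ as well, since the arc degrees are Poisson-like with mean $cL$. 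Stretched-exponential decay is of course still enough for $\int_0^\infty t\,\beta(\hat R,t)\,\dd t=O(m)$, so this would not be fatal -- but your sketch asserts the stronger bound without justification, and meanwhile the harder decorated-expander verification for $\hat R$ is simply left open. Your reduction from $\spr$ to $\beta$, and the geodesic-lifting bound $\beta(R,t)\le (L+1)\,\beta(\hat R,\lfloor t/(3L)\rfloor-1)$, are both sound and could in principle be combined with the paper's Step~2 machinery; but as it stands the proposal relies on an unproved structural claim that carries essentially all the weight.
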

\noindent
Again,  the proof gives a stronger result with a tail
inequality for Lipschitz functions which is exponential in $\sqrt n$.
   To set \refT{thm.sw1} in context, we shall show also that for any $C$ there exists a constant $c_0=c_0(C)>0$
  such that if $0< c \leq c_0$ then \whp\ $\; \spr(R_{n,c/n}) >C$.
  \smallskip
  
  In Section~\ref{sec.lengths} we introduce edge-lengths.
  Given a connected graph $G$ with edge lengths $\ell(uv) \geq 0$, we call a real-valued function $f$ on $V(G)$ {\em Lipschitz}
  if we always have $|f(u)-f(v)| \leq \ell(uv)$.
  The {\em spread} is defined to be the maximum variance of $f(X)$ for such an $f$, where $X$ is uniform over~$V$.
\begin{theorem} \label{thm.lengths}
  There is a constant $\CC$ such that for $\Kn$ with edge lengths \iid{} uniform over ${1,\dots,n}$,
  the spread is \whp\ at most $\CCx$. \CCdef\CClengths
\end{theorem}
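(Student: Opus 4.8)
The plan is to compare an arbitrary Lipschitz function on $(\Kn,\ell)$ with its restriction to the giant component of the subgraph of \emph{short} edges, where we already have control via \refT{C2}. Fix a constant $c>1$ and let $G^\circ$ be the spanning subgraph of $\Kn$ consisting of the edges of length at most~$c$. Since the events $\{\ell(uv)\le c\}$ are independent of probability $c/n$, $G^\circ$ is distributed as $\Gnc$; moreover, conditionally on $G^\circ$, the lengths of the remaining edges are i.i.d.\ uniform on $\{c+1,\dots,n\}$, independently of the short lengths. Let $\cG$ be the giant component of $G^\circ$, so \whp\ $|\cG|\ge\gamma n$ for a constant $\gamma=\gamma(c)>0$, and by \refT{C2}, together with the exponential tail strengthening behind it (Theorem~\ref{T3}), \whp\ $\spr(\cG)=O(1)$ and, for \emph{every} $1$-Lipschitz $g$ on $\cG$ with median $m$, the number of $u\in\cG$ with $|g(u)-m|>t$ is $O(|\cG|e^{-c_1t})$, uniformly in $g$ and $t$.

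Given a $1$-Lipschitz $f$ on $(\Kn,\ell)$, its restriction to $\cG$ is $c$-Lipschitz there (edges of $\cG$ have length $\le c$), so $\Var_\cG(f)=O(1)$ and, for a median $r$ of $f$ on $\cG$, $|\{u\in\cG:|f(u)-r|>s\}|=O(|\cG|e^{-c_2s})$ for a constant $c_2$. I would then use $\Var(f)=\frac1{2n^2}\sum_{u,v\in V}(f(u)-f(v))^2$ and split the sum by whether the endpoints lie in $\cG$ or in $V\setminus\cG$. The ``both in $\cG$'' part is $(|\cG|/n)^2\Var_\cG(f)=O(1)$ at once. For the rest I must pin $f$ down on $V\setminus\cG$: for each $v\notin\cG$ put $D_v=\min_{u\in\cG}\ell(uv)$ and let $u_v\in\cG$ attain this minimum (ties broken by label), so that $|f(v)-f(u_v)|\le\ell(vu_v)=D_v$. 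Two facts, both obtained by conditioning on $G^\circ$ and using that for distinct $v\notin\cG$ the edge-lengths to $\cG$ involve disjoint randomness: \emph{(i)} the $D_v$ are conditionally independent with $\P(D_v>c+t)\le e^{-\gamma t}$, hence $\E D_v^2=O(1)$, and by Bernstein's inequality (plus $\max_v D_v=O(\log n)$ \whp) $\sum_{v\notin\cG}D_v^2=O(n)$ \whp; \emph{(ii)} conditionally on $G^\circ$ the vertices $u_v$ are i.i.d.\ uniform on $\cG$, i.e.\ a uniform allocation of $|V\setminus\cG|$ balls into the bins $\cG$, so the loads $w_u=|\{v\notin\cG:u_v=u\}|$ satisfy $\sum_u w_u=|V\setminus\cG|$ and the classical balls-in-bins tail $|\{u:w_u\ge x\}|=O(|\cG|e^{-c_3x})$ \whp.

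Substituting $f(v)=f(u_v)+O(D_v)$ into the two remaining parts of $\sum_{u,v}(f(u)-f(v))^2$ and expanding squares reduces everything, up to constant factors, to three kinds of terms: terms $O\bigl(n\sum_{v\notin\cG}D_v^2\bigr)=O(n^2)$; terms $O\bigl(n^2\Var_\cG(f)\bigr)=O(n^2)$ (using also that a median and the mean of $f$ on $\cG$ differ by $O(1)$); and one genuinely new term $n\sum_{u\in\cG}w_u(f(u)-r)^2$. So the heart of the matter — and the step I expect to be the main obstacle — is to show $\sum_{u\in\cG}w_u(f(u)-r)^2=O(n)$ \emph{uniformly over all Lipschitz $f$}. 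The naive estimate $(\max_u w_u)\sum_u(f(u)-r)^2=O(\log n)\cdot O(n)$ is off by a factor $\log n$; to kill it I would pair the large loads with the small values of $|f-r|$ through the rearrangement inequality: $\sum_u w_u(f(u)-r)^2\le\sum_{i\ge1}w_i^{\downarrow}g_i^{\downarrow}$, where $w_i^{\downarrow}=O(\log(|\cG|/i))$ comes from the balls-in-bins tail and $g_i^{\downarrow}=O(\log^2(|\cG|/i))$ — the decreasing rearrangement of $\bigl((f(u)-r)^2\bigr)_{u\in\cG}$ — comes from the \emph{uniform} exponential concentration of Lipschitz functions on $\cG$ provided by Theorem~\ref{T3}. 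This is precisely why the bare variance bound of \refT{C2} does not suffice. Then $\sum_i w_i^{\downarrow}g_i^{\downarrow}=O\bigl(\sum_{i\le|\cG|}\log^3(|\cG|/i)\bigr)=O(|\cG|)=O(n)$, since $\int_0^1(\log\tfrac1x)^3\,dx<\infty$.

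Assembling the pieces gives $\Var(f)=O(1)$ for every Lipschitz $f$, hence \whp\ $\spr(\Kn,\ell)=O(1)$. The remaining work is bookkeeping: arranging that all the ``\whp'' events (the structure of $\cG$, the Bernstein and balls-in-bins tails, and the uniform tail on $\cG$) hold simultaneously, and keeping the tie-breaking in the definition of $u_v$ clean. One could also replace the rearrangement step by a Hall's-theorem argument: since every $v\notin\cG$ has a bounded-length edge to a constant fraction of $\cG$, one can choose the $u_v$ so that the loads $w_u$ are bounded by an absolute constant, at the cost of only slightly enlarging the $D_v$ — then $n\sum_u w_u(f(u)-r)^2\le n\cdot O(1)\cdot\sum_u(f(u)-r)^2=O(n^2)$ immediately, and only the variance bound from \refT{C2} is needed.
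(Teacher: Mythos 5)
Your route is genuinely different from the paper's. The paper first reduces to i.i.d.\ exponential edge-lengths, splits each edge into a ``blue'' and a ``red'' copy, uses short blue edges to realize a supercritical $G_{n,2/n}$ whose giant component $H$ is a well-behaved weak decorated expander (\refL{LT2+}), and then---this is the crux---invokes Walkup's perfect-matching result through \refL{lem.biptail} and \refL{lem.biptail2} to attach each vertex of $[n]\setminus V(H)$ to $V(F)$ by a \emph{single} red edge, with loads on $F$ bounded by $1/\alpha$ and lengths having an $\alpha$-exponential tail. The resulting graph is again a well-behaved weak decorated expander once $v(D_i)$ is replaced by a length-weighted version $\tilde v(D_i)$, so the argument of \refL{lem.decexp} applies almost verbatim and yields a stretched-exponential tail for all Lipschitz functions on $[n]$, not merely bounded spread. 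You instead attach each $v\notin\cG$ to its \emph{nearest} neighbour $u_v\in\cG$, which produces unbounded (logarithmic) loads $w_u$, and compensate by pairing the large loads with the small values of $(f-r)^2$ via the rearrangement inequality, using the \emph{uniform} tail statement in \refT{T3}. This is a nice device: it bypasses the bipartite-matching lemmas entirely, at the price of delivering only $\Var f=O(1)$ rather than a tail inequality, and of needing \refT{T3} rather than just \refT{C2} (as you correctly stress).

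Two corrections are needed. First, \refT{T3} is a \emph{stretched}-exponential bound, $|\{u:|g(u)-m|>t\}|<2e^{-c_1\sqrt t}\,v(\cG)$, not $O(v(\cG)e^{-c_1 t})$ as you write. Consequently, after rescaling for the fact that $f|_\cG$ is $c$-Lipschitz, the decreasing rearrangement satisfies $g_i^{\downarrow}=O\bigl(\log^4(|\cG|/i)\bigr)$ rather than $O(\log^2(|\cG|/i))$, and the controlling sum becomes $\sum_i\log^5(|\cG|/i)=O(|\cG|)$; this is still finite, so your conclusion stands, but the exponents should be fixed. Second, the alternative ``Hall's theorem'' remark at the end does not work as stated: for any constant threshold $T$, a vertex $v\notin\cG$ has edges of length at most $T$ to only $\Theta(T)$ vertices of $\cG$ in expectation, not to a constant \emph{fraction} of $\cG$, and a positive proportion of the $v$ have \emph{no} such edge; Hall's condition therefore fails for any constant threshold. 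Producing an assignment with $O(1)$ loads and $O(1)$-length attachments (up to an exponentially small exceptional set) is precisely the non-trivial content that the paper extracts from Walkup's theorem, so your alternative is in effect a sketch of the paper's own proof rather than a shortcut around it.
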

  \noindent
  As with other theorems, the proof in fact yields a stretched exponential tail inequality for Lipschitz functions.
The proof also shows that the same result holds for \iid{} edge lengths
  that are exponential with mean $n$, or such exponentials $+1$.

  \refT{thm.lengths} is best possible in the following sense. 
  Given any fixed $C>0$, if the edge lengths are uniform over ${1,\dots,\lceil C n \rceil}$
  then \whp{} the number of vertices with minimum incident edge-length at least $C$ is at least
  $2 \lceil n/6 \rceil$, and then the spread is at least $C^2/12$.
  (Set $f(v)=C/2$ for $\lceil n/6 \rceil$ such vertices $v$,
  $f(v)= -C/2$ for another $\lceil n/6 \rceil$ such vertices $v$, and $f(v)=0$ otherwise.)
  \smallskip

Finally \refS{Sproblems} contains some open problems arising from our work.
\smallskip

Given a graph $G$ we let $v(G)$ denote the number of vertices and $e(G)$ the
number of edges. 
We use $c_1,C_1$ etc.\ to denote various positive constants.
(We use $c_i$ for small constants and $C_i$ for large.)
In Sections \refand{Sgnc}{sec.sw}, where we consider $\Gnc$ and $\Rnc$,
these are allowed to depend 
on $c$, but they never depend on~$n$.

\begin{ack}
This work was started during the Oxford--Princeton workshop on
Combinatorics in Oxford, June 2008, continued during visits of CM
and SJ to the CRM in Montreal in September and October 2008, and finally completed following a workshop in Bellairs in March 2012.
\end{ack}


\section{Random regular graphs}  \label{Sregular}

  Recall from Section~\ref{S:intro} that \gnd{} denotes the random regular graph with degree $d$.
  (If $d$ is odd, $n$ is required to be even.)
  The following result will yield Theorem~\ref{C1} as an immediate corollary.

\begin{theorem}
  \label{T1}
Fix $d\ge3$.
There exists a constant $\cc\ccdef\ccti >0$ such that \whp{} $\gnd$ is such that
every Lipschitz function $f:\gnd\to\bbR$ satisfies
\begin{equation*}
|\set{v:|f(v)-m|\ge x}| <  2 e^{-\ccti x} n  \;\; \mbox{ for all } x\ge 0
\end{equation*}
where $m$ is a median of $f$.
\end{theorem}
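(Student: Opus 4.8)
The plan is to separate the probabilistic input from the combinatorics: first prove a purely deterministic tail bound for Lipschitz functions on expander graphs (this is \refT{T1exp}), and then feed in the classical fact that $\gnd$ is an expander \whp.

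\emph{The deterministic core.} Call a $d$-regular graph $G$ on $n$ vertices an \emph{$\ga$-expander} if $|N(S)\setminus S|\ge\ga|S|$ for every $S\subseteq V(G)$ with $|S|\le n/2$, where $N(S)$ denotes the set of vertices having a neighbour in $S$. Let $f$ be a $1$-Lipschitz function on $G$ with median $m$, and for $x\ge0$ put $A_x=\set{v:f(v)\ge m+x}$. The single observation that drives everything is that a vertex adjacent to $A_x$ has $f$-value at least $m+x-1$, so $A_x\cup N(A_x)\subseteq A_{x-1}$. Since $m$ is a median, $|A_x|\le n/2$ for every $x>0$, so the expansion property applies to $S=A_x$ and gives
\[
|A_{x-1}|\ \ge\ |A_x|+|N(A_x)\setminus A_x|\ \ge\ (1+\ga)|A_x|\qquad (x>0).
\]
Anchoring at $|A_1|\le n/2$ and iterating yields $|A_k|\le\tfrac12 n(1+\ga)^{-(k-1)}$ for every integer $k\ge1$; for real $x\ge1$, using $A_x\subseteq A_{\floor x}$ and $\floor x\ge x-1$, this becomes $|A_x|\le C_\ga\, n\, e^{-\gl x}$ with $\gl=\log(1+\ga)$. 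Applying the same argument to $-f$ (which has median $-m$) gives the matching bound for $\set{v:f(v)\le m-x}$; summing the two, and using the trivial bound $|\set{v:|f(v)-m|\ge x}|\le n$ to cover the bounded range of $x$, one obtains $|\set{v:|f(v)-m|\ge x}|<2e^{-\ccti x}n$ for all $x\ge0$, provided $\ccti>0$ is taken small enough in terms of $\ga$ (the factor $2$ leaves ample room to absorb the constant $C_\ga$).

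\emph{Passing to the random graph.} It is classical that for each fixed $d\ge3$ there is $\ga=\ga(d)>0$ with \whp{} $\gnd$ an $\ga$-expander; the hypothesis $d\ge3$ is essential, since $\gnx2$ is a disjoint union of cycles and has no expansion. One route is a first-moment bound in the configuration model, summing over sets $S$ of size $s\le n/2$ the probability that all edges leaving $S$ fall inside a fixed set of size $\ceil{(1+\ga)s}$ and checking that the total is $o(1)$ for $\ga$ small; the delicate part is the small-$s$ regime, but it is well documented, so I would cite it rather than redo it. Conditioning on this event and invoking the deterministic core proves \refT{T1}. Then \refT{C1} is immediate, since the tail bound forces every Lipschitz $f$ to have $\Var f\le\tfrac1n\sum_v(f(v)-m)^2<\int_0^\infty 4x\,e^{-\ccti x}\dd x=4/\ccti^2$ by the layer-cake formula, and hence $\spr(\gnd)\le 4/\ccti^2$.

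\emph{The main obstacle.} The combinatorial step is short and sturdy --- the entire engine is the containment $A_x\cup N(A_x)\subseteq A_{x-1}$ feeding a geometric recursion, and the tidying of constants to match the exact form ``$<2e^{-\ccti x}n$'' is routine. The substantive ingredient is instead the expansion of $\gnd$ on \emph{all} scales up to $n/2$, and within that the small-set range, which is precisely where $d\ge3$ enters; that is the point at which I would lean on the literature rather than argue from scratch.
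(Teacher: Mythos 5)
Your proposal is correct and takes essentially the same route as the paper: isolate a deterministic tail bound for Lipschitz functions on expander graphs via a level-set recursion (the paper's Theorem~\ref{T1exp}), then invoke a classical expansion property of $\gnd$. The only cosmetic differences are that you phrase expansion via the outer boundary $|N(S)\setminus S|\ge\alpha|S|$ while the paper uses the inner boundary (both follow in the same way from the Cheeger-constant result of \cite{BKW} that the paper cites), and the paper disposes of $d\ge3$ by reducing to $d=3$ via monotonicity \cite[Thm.~9.36]{JLR} rather than arguing uniformly over $d$.
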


In principle,  numerical values could be given for the constants
$\ccti$ and $\CCti$, but we have not tried to find explicit values,
nor to optimize the arguments. These constants can be taken
independent of $d\ge3$; in fact, it follows by  monotonicity
\cite[Theorem 9.36]{JLR} that any constant that works for
$d=3$ will work for all larger $d$ as well. We will thus consider
$d=3$ only in the proof. (Alternatively, and perhaps more elementarily,
we are convinced that the proof below easily could be modified to an
arbitrary $d$, but we have not checked the details.)

  For $\alpha>0$ we say that a graph $G$ is an $\alpha$-{\em expander} if every set $W \subset V(G)$ with
  $|W| \leq v(G)/2$ contains at least $\alpha|W|$ vertices with neighbours in $V(G)\setminus W$.
  (This is slightly at odds with the standard definition of expansion but is more convenient for our purposes.)
  Observe that an $\alpha$-expander must be connected.
  For disjoint sets $A$ and $B$ of vertices in $G$ let $E(A,B)$ be the set of edges with one end in $A$ and one in $B$;
  and let $e(A,B) = |E(A,B)|$. The \emph{Cheeger constant} of a graph $G$
  with vertex set $V$ and $e(G)>0$ is 
\begin{equation}   \label{cheeger}
  \Phi(G) = \min_{\{S\subset V:\, 0<\sum_{v \in S} d(v)\, \leq e(G)\}}
  \frac{e(S,V\setminus S)}{\sum_{v \in S} d(v)}.
\end{equation}
  $\Phi(\cdot )$ measures the edge expansion, rather than the vertex expansion, of graphs.
  We shall use the following expander property of $\gnx3$, proved (in a more general version) by \cite{BKW}
  (see also \cite{Luczak} and \cite[(proof of) Lemma 5.1]{SJ184}).

\begin{lemma}[\cite{BKW}, Lemma 5.3] \label{L1}
  There is a constant $\cc>0\ccdef\ccli$ such that \wvhp{} $\Phi(\gnx3) \geq \ccx$.
\end{lemma}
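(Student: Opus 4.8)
This is a known result quoted from \cite{BKW}, so strictly the proof is deferred to that reference; what follows is a sketch of how one would establish it directly for the configuration model. The plan is to work with the pairing/configuration model $\gxnx3$ (which produces $\gnx3$ conditioned on simplicity, an event of probability bounded away from $0$), prove the Cheeger bound there, and transfer to $\gnx3$. For a fixed $S \subseteq V$ with $1 \le |S| = s \le n/2$, the quantity $\sum_{v\in S} d(v) = 3s$ and $e(S, V\setminus S) = 3s - 2e(S,S)$, where $e(S,S)$ counts edges inside $S$; so the Cheeger ratio is $1 - \frac{2e(S,S)}{3s}$, and we need $e(S,S) \le (1-\tfrac{3}{2}\ccx)\cdot\tfrac{3s}{2}$, i.e. the number of internal half-edges that get paired within $S$ is at most a constant fraction strictly below $1$ of all $3s$ half-edges incident to $S$.

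The core is a first-moment (union bound) estimate. Fix $s$ and a target $k = e(S,S)$; in the configuration model the probability that a given pairing of the $3n$ half-edges produces at least $k$ pairs inside a given $s$-set is at most $\binom{3s}{2k}(2k-1)!! \big/ (3n-1)!!^{\,-1}\cdot(\cdots)$ — more cleanly, the expected number of ways to choose $2k$ of the $3s$ half-edges in $S$ and perfectly match them among themselves is $\binom{3s}{2k}(2k-1)!! \cdot \frac{(3n-2k-1)!!}{(3n-1)!!} \le \binom{3s}{2k}\big(\tfrac{3s}{3n}\big)^{k}$ up to constants. Summing over the $\binom{n}{s}$ choices of $S$ and using $\binom{n}{s} \le (en/s)^s$, one gets a bound of the form $\exp\big(s\log(en/s) + O(s) - ck\log(n/s)\big)$; choosing $\ccx$ small forces $k$ to be a small enough multiple of $s$ that this is summable over all $s \le n/2$ and all relevant $k$, hence $o(1)$, actually $e^{-\gO(n)}$ in the relevant regime. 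Two ranges need separate care: small $s$ (say $s \le n^{0.99}$), where $\log(n/s)$ is large and the bound is easy; and $s = \Theta(n)$, where $\log(en/s) = O(1)$ and one must check that the constant in front of $k = (\text{const})\cdot s$ wins — this is exactly where $\ccx$ must be chosen sufficiently small, and it is the only delicate constant-chasing in the argument. Finally, conditioning on simplicity only inflates probabilities by $O(1)$, so \whp{} (indeed \wvhp) every $S$ satisfies the bound, giving $\Phi(\gnx3) \ge \ccx$.

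The main obstacle is the $s = \gO(n)$ regime: there the entropy term $s\log(en/s)$ is linear in $n$ with an $O(1)$ coefficient, so it is not automatically beaten, and one has to verify that for a suitably small absolute constant $\ccx$ the requirement $e(S,S) \le (1 - \tfrac32\ccx)\cdot\tfrac{3s}{2}$ still fails with probability $e^{-\gO(n)}$ uniformly in $s$; this amounts to checking that the function $\phi(\theta) = \theta\log(e/\theta) - (\text{const})\,\beta\log(1/\theta)$ (with $\theta = s/n$, $\beta$ the forbidden internal-edge density) stays negative on $(0,\tfrac12]$ once $\beta$ is bounded away from its typical value $\tfrac12\cdot\tfrac13\cdot$(degree$) $, which is a routine but not entirely mechanical calculus check. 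Everything else — the configuration-model moment computation, the transfer to $\gnx3$, and the small-$s$ range — is standard.
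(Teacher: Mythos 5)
The paper gives no proof of this lemma; it simply cites \cite{BKW}, Lemma~5.3, and the rest of the argument is built on that black box. So there is no in-paper proof to compare against. Your sketch reconstructs the standard first-moment argument for edge expansion of random cubic graphs in the configuration model, which is essentially the argument underlying the cited result (and the classical proofs going back to Bollob\'as). The overall structure — condition on simplicity, union bound over sets $S$ of size $s\le n/2$, bound $\P(e(S,S)\ge k)$ by the expected number of internal partial matchings, split into the small-$s$ and linear-$s$ ranges, and observe that only the $s=\Theta(n)$ range requires care with the constant — is correct and is the right way to attack this.

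Two small points. First, a harmless algebra slip: from $1-\frac{2e(S,S)}{3s}\ge\ccx$ you should get $e(S,S)\le(1-\ccx)\cdot\frac{3s}{2}$, not $(1-\tfrac32\ccx)\cdot\frac{3s}{2}$; your version is strictly more demanding, so the slip would only cost you a constant, but it should be fixed. Second, you are candid that the linear-$s$ regime requires checking a concrete inequality between the entropy term $s\log(en/s)$ and the large-deviation exponent, and that you have not actually carried it out. This is the one place where a reader cannot take the sketch on trust: for $s\sim n/2$ the entropy is $\sim n\log 2$, the typical number of internal edges is $\sim 3n/8$, and you are asking $e(S,S)$ to exceed roughly $3n/4$, so the needed large-deviation rate must be verified to exceed $\log 2$ uniformly. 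It does — this is why random cubic graphs are expanders — but it is exactly the content of the lemma, and a complete proof would need to display the resulting inequality and confirm it holds for a small enough absolute $\ccx>0$. Also, a very small notational point: in this paper \emph{\wvhp} is simply defined to be a synonym for \emph{\whp}, so the phrase ``\whp{} (indeed \wvhp)'' is a tautology rather than a strengthening.
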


  Since $\gnx3$ has constant degree, Lemma \ref{L1} immediately implies vertex expansion for $\gnx3$,
  with the same constant. We state this as a simple lemma.

\begin{lemma} \label{L2A}
  If $G$ is regular, and $0<\ga\le\Phi(G)$, then $G$ is an $\ga$-expander.
\end{lemma}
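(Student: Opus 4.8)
The plan is simply to unwind the two definitions and pass from edge expansion to vertex expansion using regularity. Say $G$ is $d$-regular on $n=v(G)$ vertices. Then $\sum_{v\in S}d(v)=d|S|$ for every $S\subseteq V$, and $e(G)=dn/2$, so the constraint $0<\sum_{v\in S}d(v)\le e(G)$ in \eqref{cheeger} becomes exactly $1\le|S|\le n/2$, and
\[
  \Phi(G)=\min_{1\le |S|\le n/2}\frac{e(S,V\setminus S)}{d\,|S|}.
\]
(The hypothesis $\ga\le\Phi(G)$ already presumes $e(G)>0$, so this is meaningful; and since $\ga>0$, no admissible $S$ can be a union of components, which is consistent with the fact that an $\ga$-expander must be connected.)

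Next I would fix an arbitrary $W\subseteq V$ with $|W|\le n/2$ and show it has at least $\ga|W|$ vertices with a neighbour outside. The case $W=\emptyset$ is trivial, so assume $1\le|W|\le n/2$. Let $\partial W$ denote the set of $w\in W$ having a neighbour in $V\setminus W$. The key observation is the two-sided bound on $e(W,V\setminus W)$: on one hand every edge of $E(W,V\setminus W)$ has its $W$-endpoint in $\partial W$ and each vertex has degree $d$, so $e(W,V\setminus W)\le d\,|\partial W|$; on the other hand the displayed formula for $\Phi(G)$ gives $e(W,V\setminus W)\ge\Phi(G)\,d\,|W|\ge\ga\,d\,|W|$. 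Combining these and cancelling $d$ yields $|\partial W|\ge\ga|W|$, which is precisely the $\ga$-expansion property.

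I do not expect any genuine obstacle here: the statement is a routine translation between the two notions of expansion. The only points that need a moment's care are the degenerate cases (the empty set $W$, and the implicit requirement $e(G)>0$ for $\Phi(G)$ to be defined) and the direction of the inequality $e(W,V\setminus W)\le d\,|\partial W|$, which is exactly where regularity is used to convert an edge-boundary bound into a vertex-boundary bound.
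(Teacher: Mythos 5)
Your proof is correct and follows essentially the same route as the paper's: bound $e(W,V\setminus W)$ below by $\Phi(G)\,d\,|W|$ using the Cheeger definition and regularity, then observe that each cut edge contributes a $W$-endpoint of degree $d$, so $e(W,V\setminus W)\le d\,|\partial W|$, and cancel $d$. Your version is slightly more explicit about the degenerate cases and the $|\partial W|$ bookkeeping, but the argument is the same.
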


\begin{proof}
  Let $n\=v(G)$, and let $d$ be the degree of the vertices.
Note that $G$ has precisely $dn/2$ edges.
Fix a set $W$ of vertices in $G$ with $|W| \leq n/2$.
Then $\sum_{v \in W} d(v) =d|W| \leq dn/2$, so by \eqref{cheeger}
there are at least $\Phi(G)d|W|$ edges from $W$ to its
complement. These edges have at least $\Phi(G)d|W|/d\ge\ga|W|$
endpoints in  $W$.
\end{proof}

\begin{lemma}
  \label{L2}
$\gnx3$ is \wvhp{} a $\ccx$-expander.
\end{lemma}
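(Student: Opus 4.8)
The plan is to obtain \refL{L2} by simply chaining together the two preceding lemmas, since all of the real (probabilistic) work has already been done. First I would invoke \refL{L1} to get that \wvhp{} $\Phi(\gnx3)\ge\ccli$, writing $\ccli$ for the constant it produces. Then, since $\gnx3$ is $3$-regular, I would apply \refL{L2A} with $d=3$ and $\ga=\ccli$: on the \wvhp{} event from \refL{L1} we have $0<\ccli\le\Phi(\gnx3)$, so \refL{L2A} yields that $\gnx3$ is a $\ccli$-expander. Taking $\ccx=\ccli$ in the statement then finishes it.

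If one prefers to spell out the one-line argument rather than cite \refL{L2A}: on the event $\{\Phi(\gnx3)\ge\ccli\}$, fix $W\subset V(\gnx3)$ with $|W|\le v(\gnx3)/2$; then $\sum_{v\in W}d(v)=3|W|\le 3v(\gnx3)/2$, so by~\eqref{cheeger} there are at least $\ccli\cdot 3|W|$ edges from $W$ to $V(\gnx3)\setminus W$, and since each such edge has exactly one endpoint in $W$ while each vertex of $W$ lies on at most $3$ of them, at least $\ccli|W|$ distinct vertices of $W$ have a neighbour outside $W$. (Here $\ccli\le1$ automatically, e.g.\ because a single vertex shows $\Phi(G)\le1$, so ``$\ccli$-expander'' is a meaningful notion.) Hence $\gnx3$ is a $\ccli$-expander on this event, which has probability $1-o(1)$ by \refL{L1}.

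There is no genuine obstacle in this step: the content lies entirely in the cited expansion estimate \refL{L1} for $\gnx3$, and the present lemma is only the routine translation of edge expansion (the Cheeger constant) into the vertex-expansion notion of ``$\ga$-expander'' used throughout this section. The single point requiring a little care — already handled in the proof of \refL{L2A} — is that being an $\ga$-expander is a statement about the number of \emph{vertices} of $W$ with an external neighbour, not about the number of boundary edges, which is exactly why one divides the edge bound by the degree.
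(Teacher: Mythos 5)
Your argument is exactly the paper's: Lemma~\ref{L2} is proved there as ``an immediate consequence of Lemmas~\ref{L1} and~\ref{L2A},'' which is precisely your chaining of the \whp{} Cheeger bound with the regular-graph translation from edge to vertex expansion. Your second paragraph merely unfolds the proof of Lemma~\ref{L2A}, also matching the paper, so there is nothing to add.
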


\begin{proof}
An immediate consequence of Lemmas \refand{L1}{L2A}.
\end{proof}

  The following deterministic result on expanders now yields Theorem~\ref{T1}.
  

\begin{theorem} \label{T1exp}
  For each $0<\alpha \leq \frac12$
  and each $\alpha$-expander $G_n$ on $[n]$, every Lipschitz function $f$ for $G_n$ satisfies
\begin{equation*}
  |\set{v:|f(v)-m|\ge x}| <  2 e^{-(\alpha/2) x} n  \;\; \mbox{ for all } x\ge 0
\end{equation*}
  where $m$ is a median of $f$.
\end{theorem}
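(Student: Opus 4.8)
The plan is to exploit vertex expansion to show that the level sets of a Lipschitz function shrink geometrically as one moves away from the median. Fix an $\alpha$-expander $G_n$ on $[n]$ and a Lipschitz function $f$ with median $m$; by replacing $f$ with $f-m$ we may assume $m=0$. For $x\ge 0$ set $A_x^+=\set{v:f(v)\ge x}$ and $A_x^-=\set{v:f(v)\le -x}$, and let $a_x=|A_x^+|$ (the argument for $A_x^-$ is symmetric). Since $0$ is a median, $a_0\le n/2$, and more generally $a_x\le n/2$ for all $x>0$. The key observation is that, because $f$ is Lipschitz, a vertex $v\in A_x^+$ that has a neighbour outside $A_{x}^+$ must in fact have a neighbour $w$ with $f(w)<f(v)\le$ something — more precisely, every vertex of $A_{x+1}^+$ that has a neighbour in $V\setminus A_{x+1}^+$ has that neighbour in $A_x^+\setminus A_{x+1}^+$, since $f$ cannot drop by more than $1$ across an edge. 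So the ``boundary vertices'' of $A_{x+1}^+$ supplied by the expander property are all counted in $a_x-a_{x+1}$.

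Concretely, I would argue as follows. Fix an integer $k\ge 0$ and apply the $\alpha$-expander property to the set $W=A_{k+1}^+$, which has size $a_{k+1}\le n/2$: at least $\alpha\, a_{k+1}$ vertices of $A_{k+1}^+$ have a neighbour in $V\setminus A_{k+1}^+$. Each such neighbour $w$ satisfies $f(w)<k+1$, and by the Lipschitz bound $f(w)\ge f(v)-1\ge (k+1)-1=k$, so $w\in A_k^+\setminus A_{k+1}^+$. Hence the number of edges between $A_{k+1}^+$ and $A_k^+\setminus A_{k+1}^+$ is at least $\alpha\, a_{k+1}$; in particular $|A_k^+\setminus A_{k+1}^+|\ge \alpha\, a_{k+1}$ — wait, one must be slightly careful since a single vertex of $A_k^+\setminus A_{k+1}^+$ could receive many such edges, but in fact we only need that at least $\alpha a_{k+1}$ \emph{vertices} of $A_{k+1}^+$ send an edge out, and each such vertex is adjacent to at least one vertex of $A_k^+\setminus A_{k+1}^+$; counting the other way is not needed — instead I will simply use that $a_{k+1}$ vertices want a neighbour outside, and re-derive the recursion at the level of the boundary differently. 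The clean route: the set $W = V\setminus A_k^+ = \set{v: f(v)<k}$ also has a complement of size $a_k \le n/2$, so applying expansion to... Actually the cleanest is: apply expansion to $A_{k+1}^+$ to get at least $\alpha a_{k+1}$ vertices of it adjacent to $A_k^+\setminus A_{k+1}^+$, whence $a_k - a_{k+1}\ge$ (number of vertices of $A_k^+\setminus A_{k+1}^+$ that are hit) — but every vertex of $A_{k+1}^+$ with an outside neighbour contributes, and two such vertices may share the same outside neighbour, so this only gives $a_k-a_{k+1}\ge 1$ in the worst case. To fix this properly I instead double count with the Cheeger/edge-expansion flavour, or better: apply vertex expansion to the \emph{complement side}. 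Let $U_k=\set{v:f(v)\le k}=V\setminus A_{k+1}^+$; if $|A_{k+1}^+|\le n/2$ then... I will set this up so the recursion reads $a_{k+1}\le \frac{1}{1+\alpha}a_k$, equivalently $a_k-a_{k+1}\ge \alpha a_{k+1}$, obtained by applying vertex expansion to $A_{k+1}^+$ and observing that the at-least-$\alpha a_{k+1}$ boundary vertices of $A_{k+1}^+$, together with $A_{k+1}^+$ itself, all lie in $A_k^+$, and the boundary vertices lie in $A_k^+\setminus A_{k+1}^+$ but need not be distinct from each other only if edges are parallel — since $G_n$ is simple this is fine: distinct vertices of $A_{k+1}^+$ may have a common neighbour, but the \emph{set} of boundary vertices of $A_{k+1}^+$ has size $\ge$ ... hmm, this is exactly the subtlety. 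The correct statement of my ``$\alpha$-expander'' hypothesis is the one in the paper: $W$ contains $\ge\alpha|W|$ vertices \emph{with a neighbour outside}; it does not directly bound $|N(W)\setminus W|$ from below. So the honest recursion is: the $\ge\alpha a_{k+1}$ vertices of $A_{k+1}^+$ having an outside neighbour each have such a neighbour in $A_k^+\setminus A_{k+1}^+$ — to convert this into $a_k-a_{k+1}\ge\alpha a_{k+1}$ I should instead apply expansion to $A_k^+$ when $a_k\le n/2$: hmm, that gives vertices of $A_k^+$ with a neighbour in $f<k$, not what I want either.

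The resolution — and I expect this bookkeeping to be \textbf{the main obstacle} — is to run the expansion on the \emph{outer} set. Fix $k\ge 0$ with $a_{k}\le n/2$ (true for all $k\ge 1$, and for $k=0$ if $a_0\le n/2$). Consider $W_k := V\setminus A_{k}^+=\set{v:f(v)<k}$; if $|W_k|\le n/2$ apply expansion there, otherwise apply it to $A_k^+$. Take the case $|A_k^+|\le n/2$: expansion gives $\ge\alpha a_k$ vertices of $A_k^+$ adjacent to $\set{v:f(v)<k}$, hence (Lipschitz) adjacent to $\set{v:k-1\le f(v)<k}$; these $\ge\alpha a_k$ vertices lie in $A_k^+\setminus A_{k+1}^+$ \emph{provided their own $f$-value is $<k+1$}, which need not hold. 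So this still is not quite it. The genuinely correct recursion uses that a vertex $v\in A_{k+1}^+$ adjacent to outside has a neighbour $w$ with $k\le f(w)<k+1$, i.e. $w\in A_k^+\setminus A_{k+1}^+$; and symmetrically, to lower-bound $a_k-a_{k+1}=|A_k^+\setminus A_{k+1}^+|$ I use edge-expansion is not available here, only vertex-expansion — so I will instead prove the recursion $a_{k+2}\le\frac{1}{1+\alpha}a_k$ by applying expansion to $A_{k+1}^+$: its $\ge\alpha a_{k+1}$ boundary-into-$V\setminus A_{k+1}^+$ vertices have neighbours with $f$-value in $[k,k+1)$, a set disjoint from $A_{k+1}^+$; these neighbours need not be distinct, but they are all in $A_k^+$; meanwhile $A_{k+1}^+\subseteq A_k^+$ too — this only gives $a_k\ge a_{k+1}$. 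I conclude that the \emph{right} hypothesis for a clean proof is genuine vertex expansion $|N(W)\cap(V\setminus W)|\ge\alpha|W|$, and since $G_n$ here is regular of bounded degree the paper's hypothesis upgrades to this at the cost of a factor $d$ — but the theorem claims a universal $\alpha/2$ independent of $d$. Therefore the intended argument must be: iterate with step $2$ (or use that the boundary vertices of $A_{k+1}^+$ are adjacent only to $A_k^+\setminus A_{k+1}^+$ \emph{and} the boundary vertices of $A_{k-1}^-$-type sets), producing $a_{k}\le n\,e^{-(\alpha/2)k}$, i.e. $a_x < n\,e^{-(\alpha/2)x}$ for all real $x\ge0$ after interpolating $a_x\le a_{\lfloor x\rfloor}$ and absorbing one step of slack into the factor $2$; combining the $+$ and $-$ sides gives the stated $2e^{-(\alpha/2)x}n$. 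So the skeleton is: (1) reduce to $m=0$ and to bounding $a_x=|f\ge x|$; (2) establish, via the $\alpha$-expander property applied to a suitable half-sized level set together with the Lipschitz ``$f$ drops by $\le1$ per edge'' fact, a geometric recursion of the form $a_{k+1}\le (1+\alpha)^{-1}a_k$ (possibly in steps of $2$, hence the $\alpha/2$); (3) iterate from $a_0\le n/2$ to get $a_k\le n\,2^{-1}(1+\alpha)^{-k}\le n\,e^{-(\alpha/2)k}$ using $\log(1+\alpha)\ge\alpha/2$ — wait, $\log(1+\alpha)\le\alpha$, and $\ge\alpha/2$ only for $\alpha\le1$, which holds since $\alpha\le1/2$; (4) pass from integer $k$ to real $x$ and combine the two tails into the factor $2$. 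The one delicate point throughout is correctly arranging the expansion/Lipschitz interaction so that boundary vertices of one level set are forced into the \emph{annulus} between consecutive level sets; once that combinatorial lemma is pinned down, steps (3) and (4) are routine.
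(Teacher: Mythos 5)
There is a genuine gap: you never pin down the recursion, and you explicitly abandon the attempt ("I expect this bookkeeping to be the main obstacle"). The missing observation is simple and you actually came within a line of it. Apply the $\alpha$-expander property to $W = V_t := \{v : f(v) \ge t\}$ for $t>0$ (so $|W|\le n/2$). This gives at least $\alpha|V_t|$ vertices $x\in V_t$ each having a neighbour $y\notin V_t$, so $f(y)<t$. Now the point you missed: the Lipschitz bound controls the $f$-value of the \emph{boundary vertex $x$ itself}, not of $y$ — we get $f(x)\le f(y)+1 < t+1$. Hence those $\ge\alpha|V_t|$ vertices lie in $V_t\setminus V_{t+1}$, directly giving $|V_{t+1}|\le(1-\alpha)|V_t|$. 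There is no overcounting issue because you count the boundary vertices, not their outside neighbours; the worry about "multiple boundary vertices sharing an outside neighbour," which derailed your whole analysis, is irrelevant.

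You did in fact consider applying expansion to $A_k^+$ and wrote "that gives vertices of $A_k^+$ with a neighbour in $f<k$, not what I want either" — but it is exactly what you want: such a vertex $x$ satisfies $f(x)<k+1$, so it is in $A_k^+\setminus A_{k+1}^+$, and there are $\ge\alpha a_k$ of them. From there the argument is routine and matches what you outlined in your "skeleton": $|V_k|\le(1-\alpha)^k n\le e^{-\alpha k}n$ for integers $k\ge 1$, symmetry for the lower tail, $\lfloor x\rfloor > x/2$ for $x\ge1$ absorbs the floor into the exponent $\alpha/2$ (your attribution of the $\alpha/2$ to "steps of $2$" is not the actual mechanism), and for $0\le x<1$ the bound holds trivially since $\alpha\le\tfrac12$ gives $2e^{-(\alpha/2)x}>1$. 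So your high-level plan is the paper's, but without the correct combinatorial lemma the proof does not go through.
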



\begin{proof}

  Let $f$ be a Lipschitz function on $G_n$, with median~$m$.
  We may assume that $m=0$; otherwise we replace $f$ by $f-m$.
  Let $V_t\=\set{v\in\nn:f(v)\ge t}$. Then $|V_t|\le n/2$ for $t>0$.

  If $t>0$ and $V_t$ is nonempty then
  there is a subset of $V_t$ of size at least $\alpha |V_t|$ of vertices
  $x$ that are adjacent to at least one vertex $y\notin V_t$.
  Thus $f(y)<t$, and since $f$ is Lipschitz, we have $f(x)<t+1$ for every such $x$.
  Consequently, $|V_{t+1}|\le(1-\alpha)|V_t|$ when $t>0$.
  Since $|V_1|\le n/2\le(1-\alpha)n$, 
  we obtain by induction, for simplicity considering integers $k$ only,
\begin{equation*}\label{a4}
  |V_{k}|\le (1-\alpha)^kn \leq e^{-\alpha k} n, \qquad k=1,2,\dots
\end{equation*}

  By symmetry, we have the same estimate for $\set{v:f(v)\le -k}$, and thus, for every $x\ge1$,
\begin{equation*}
  |\set{v:|f(v)|\ge x}| \le  2e^{-\alpha \floor x} n <  2e^{-(\alpha/2) x} n.
\end{equation*}
  Finally, since $2e^{-\alpha/2} > 1$ the last bound also holds for each $0 \leq x <1$,
  which completes the proof.
\end{proof}


\section{$G_{n,c/n}$ with $c > 1$ fixed.}
\label{Sgnc}
  In this section we consider supercritical random graphs, and prove the following theorem,
  which immediately implies Theorem~\ref{C2}. 
\begin{theorem}\label{T3}
  Given fixed $c>1$ there is a constant $\cc=\ccx(c)\ccdef\ccb$
such that \whp{} the giant component $H= H_{n,c/n}$ of\/ $G_{n,c/n}$ is such
that 
  every Lipschitz function $f$ for $H$ satisfies 
\begin{equation}\label{t3}
  |\set{v:|f(v)-m|> x}| <  2 {e^{-\ccx\sqrt{x}}} v(H) \;\; \mbox{ for all } x\ge0
\end{equation}
  where $m$ is a median of $f$.
\end{theorem}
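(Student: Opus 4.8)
The plan is to exploit the standard ``$2$-core plus decorations'' description of the giant component, to observe that the decorations carry only an exponentially small fraction of its vertices, and to run an expansion/iteration argument on the level sets of a Lipschitz function --- in the spirit of the proof of \refT{T1exp}, but with a step length that is optimized against the level; the $\sqrt x$ in the exponent is exactly what this optimization produces. Throughout, all constants depend only on $c$.

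\emph{Structure.} Let $C$ be the $2$-core of $H=\Hnc$ and let $K$ be its \emph{kernel}, obtained by suppressing every degree-$2$ vertex of $C$; thus the edges of $K$ correspond to internally disjoint ``bare paths'' of $C$, and $K$ has minimum degree $\ge 3$. It is standard that \whp\ $K$ has $\Theta(n)$ vertices and edges with a light-tailed degree sequence, and --- by expansion estimates for random (multi)graphs with a given degree sequence and minimum degree at least three, in the spirit of Lemmas \refand{L1}{L2} --- that \whp\ $K$ is an $\alpha$-expander for some $\alpha>0$. Moreover, since the pendant trees hanging off $C$ behave like subcritical Galton--Watson trees and the bare-path lengths like i.i.d.\ geometric variables, \whp
\begin{equation*}
|\{v\in V(H): d_H(v,C)\ge r\}|\le Ce^{-\beta r}n
\quad\text{and}\quad
|\{v\in V(C): v\text{ lies on a bare path of length }\ge r\}|\le Ce^{-\beta r}n
\end{equation*}
for all $r\ge 0$ and suitable $\beta,C>0$; in particular all but an exponentially small fraction of $V(H)$ lies within distance $O(1)$ of $V(K)$.

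\emph{Effective isoperimetry (the crux).} The key claim is that \whp, for every $S\subseteq V(H)$ with $|S|\le v(H)/2$ and every $L\ge 1$,
\begin{equation*}
\bigl|\{v\in S: d_H(v,V(H)\setminus S)\le L\}\bigr|\ \ge\ \alpha'|S|-Ce^{-\beta' L}n
\end{equation*}
for constants $\alpha',\beta',C>0$: up to an exponentially small error, a fixed fraction of $S$ can reach the complement within $L$ steps. To see this one splits $S$ according to whether a vertex lies in $V(K)$, in the interior of a bare path, or in a pendant tree, and treats the stratum carrying at least a third of $S$ by the appropriate expansion. For the kernel stratum, a vertex of $S\cap V(K)$ adjacent in $K$ to $V(K)\setminus S$ reaches $V(H)\setminus S$ along the corresponding bare path, and the rare vertices whose only such bare path is long are absorbed into the $e^{-\beta' L}n$ term; the bare-path and tree strata are handled by elementary path/tree geometry together with the two tail bounds above. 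A little casework covers degenerate configurations --- a set $S$ almost filling a stratum, or one with $|S\cap V(K)|>|V(K)|/2$, which can occur only when the kernel is a small fraction of $H$ and is then handled via the dominant stratum. (The inequality is vacuous unless $L$ is moderately large, so the decorations need never be traversed more than $O(\sqrt{\log n})$ times.) This step is the main obstacle: converting expansion of the kernel into an isoperimetric inequality for level sets of a Lipschitz function on \emph{all} of $H$ forces one to account in aggregate, via the exponential tails, for the non-expanding pendant trees and bare-path interiors, and to make sure that every loss so incurred lands in the error term $Ce^{-\beta' L}n$ rather than in the main term $\alpha'|S|$; the degenerate cases make the bookkeeping delicate.

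\emph{From isoperimetry to the tail.} Let $f$ be a Lipschitz function on $H$ with median $m$; replacing $f$ by $f-m$ we may assume $m=0$, and set $A_t=\{v\in V(H): f(v)\ge t\}$, so $|A_t|\le v(H)/2$ for all $t\ge 1$. If $v\in S:=A_t$ satisfies $d_H(v,V(H)\setminus S)\le L$ then $f(v)<t+L$, so $v\notin A_{t+L}$; hence the effective-isoperimetry bound applied to $S=A_t$ yields
\begin{equation*}
|A_{t+L}|\ \le\ (1-\alpha')|A_t|+Ce^{-\beta' L}n\qquad(t\ge 1,\ L\ge 1).
\end{equation*}
Iterating this in steps of size $L$ from $t=1$, the geometric decay of the coefficients leaves a total error of only $O(e^{-\beta' L}n)$, so $|A_t|\le(1-\alpha')^{\lfloor t/L\rfloor}n+C'e^{-\beta' L}n$; taking $L\asymp\sqrt t$ to balance the two terms gives $|A_t|\le C''n\,e^{-\kappa\sqrt t}$ for all $t$ and some $\kappa>0$. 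The same bound holds for $\{v:f(v)\le -t\}$ by symmetry, and --- noting that the desired inequality is trivial for $x$ below a constant threshold, where $2e^{-c_2\sqrt x}v(H)\ge v(H)$ --- a routine choice of a small constant $c_2$ together with the sum of the two tails yields the stated bound, with the factor $2$ and with the median $m$ in place of $0$.
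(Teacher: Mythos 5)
Your overall plan is close in spirit to the paper's: both arguments hinge on a ``core-with-decorations'' decomposition of the giant, both iterate an expansion bound on the level sets $A_t=\{f\ge t\}$, and both derive the $\sqrt{x}$ from balancing a geometric decay against an exponentially small error. But there is a genuine gap at the very point you flag as the crux, and it is worth naming precisely because the paper's own $\sqrt{x}$ is produced by a mechanism you have implicitly assumed away.

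You assert that the kernel $K$ of the giant is \whp\ a vertex $\alpha$-expander with a fixed constant $\alpha>0$, citing ``expansion estimates \dots\ in the spirit of Lemmas~\ref{L1} and~\ref{L2}.'' That analogy does not transfer. Lemma~\ref{L1} is an \emph{edge}-expansion (Cheeger) bound, and the passage from edge expansion to vertex expansion with a matching constant (Lemma~\ref{L2A}) uses \emph{regularity}. The kernel of $\Hnc$ is very much not regular: it has an exponentially-tailed but unbounded degree sequence, with maximum degree of order $\log n/\log\log n$. For such graphs, $e(W,W^c)\ge \alpha\sum_{v\in W}d(v)$ does not yield $|\partial W|\ge\alpha'|W|$ with a constant $\alpha'$, because the boundary edges can in principle concentrate on a few high-degree vertices. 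The paper's proof of Lemma~\ref{lem.decexp} addresses exactly this: from the Cheeger bound (DE1) and the degree tail (P3), one only gets that a level set of total degree $t$ loses a fraction of order $1/\log(n'/t)$ of its vertices per Lipschitz step, and this degradation is precisely what produces the $2^{-\sqrt{x}}$ decay in~(\ref{t2b}). So a constant-rate vertex isoperimetric inequality for $K$ (or for any $\Theta(n)$-vertex ``core'' subgraph of $\Hnc$) is not something the authors had available --- indeed, the whole architecture of Section~\ref{Sgnc} is built around \emph{not} having it --- and you have given no argument for it beyond an analogy that breaks at the one step (regularity) that actually matters. This is not a bookkeeping issue: if constant vertex expansion of the kernel were available together with your decoration tail bounds, your ``effective isoperimetry'' would be considerably easier, and the paper would surely have taken that route.

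Two smaller remarks. First, even granting the kernel expansion, the effective-isoperimetry inequality you state --- uniformly over all $S$ with $|S|\le v(H)/2$ and all $L\ge1$, including sets $S$ that swallow the entire kernel --- is a substantial claim, and the sentence ``degenerate configurations \dots\ handled via the dominant stratum'' is doing a lot of unexamined work. The paper sidesteps the need for such a uniform isoperimetric statement by working directly with the level sets $F_{>\lambda}$ and $D_{>i}$, which have more structure than an arbitrary half-size set $S$. Second, the paper does not construct its expander/decoration decomposition from the 2-core and kernel ``by hand'': it invokes the Benjamini--Kozma--Wormald result (\refL{LT2}) that produces a subgraph $F\subseteq H$ with $\Phi(F)\ge\alpha$ and exponentially small decorations, and then verifies (P2)--(P3) separately in \refL{Lgnc}. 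Re-deriving an equivalent structure from scratch is possible in principle, but it is not a routine corollary of ``standard'' kernel asymptotics, and your proposal treats it as such.
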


  For this case, 
  in place of Lemma~\ref{L1} 
  we can use another result of~\cite{BKW}.
  For a graph $G$ and a set of vertices $U \subset V(G)$, we write
  $G\setminus U$ for the subgraph of $G$ induced by $V(G)\setminus U$.
  For $0 < \alpha < 1$ we say that a connected graph $H$ is an
  \emph{$\alpha$-decorated expander} if $H$ has a subgraph $F$ such that
\begin{enumerate}
  \item[(DE1)]
    $\Phi(F) \geq \alpha$;
  \item[(DE2)]
    listing the connected components of $H \setminus V(F)$ as
    $D_1,\ldots,D_\ellx$ for some~$\ellx$,
$$
\left|\left\{i: e(D_i)+ e(D_i,F) \geq x\right\}\right| \leq e^{- \alpha x} e(H);
$$
  \item[(DE3)]
    no vertex $v \in V(F)$ is adjacent to (``decorated by'') more than $1/\alpha$ of the components $D_i$.
\end{enumerate}
   Note that (DE1) implies that $F$ is connected. Note further that (DE2) implies:
\newcommand{\DEij}{(DE$2'$)}
\begin{enumerate}
  \item[\DEij]
    for all $x \geq 0$, $|\{i:v(D_i) \geq x\}| \leq e^{-\alpha x} e(H)$.
\end{enumerate}

  We shall use \DEij{} rather than (DE2) in what follows.
  Let us say that $H$ is a \emph{weak $\alpha$-decorated expander} if 
  (DE1), \DEij\ and (DE3) hold, and one further condition holds: 
\begin{enumerate}
  \item[(DE4)] $v(F) \geq \alpha  v(H)$.  
\end{enumerate}
  From Benjamini et.~al.~\citep{BKW} 
  (their Theorem 4.2 and Lemma 4.7, combined) we have:
\begin{lemma}
  \label{LT2}
  Fix $c > 1$. Then there is a constant $\alpha=\alpha(c)>0$ such
  that \whp{}  the giant component $H_{n,c/n}$ of $G_{n,c/n}$
  is a weak $\alpha$-decorated expander. 
\end{lemma}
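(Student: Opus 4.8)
Since the statement is an assemblage of two results of \cite{BKW}, the plan is to quote the right statements and check that their conclusions match our definition of a weak $\alpha$-decorated expander. Benjamini, Kozma and Wormald prove (their Theorem~4.2), for fixed $c>1$, that \whp{} the giant component $H=H_{n,c/n}$ decomposes into an ``expander part'' $F$ and ``decorations'': $F$ is a subgraph with $\Phi(F)\ge\alpha$, the components $D_1,\dots,D_\ellx$ of $H\setminus V(F)$ obey a tail bound of the shape $|\{i:e(D_i)+e(D_i,F)\ge x\}|\le e^{-\alpha x}e(H)$, and no vertex of $F$ meets more than $1/\alpha$ of the $D_i$; these are precisely (DE1), (DE2), (DE3), and we already observed that (DE2) implies \DEij{}. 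Their Lemma~4.7 then supplies the further quantitative fact $v(F)\ge\alpha v(H)$, which is (DE4). Replacing $\alpha$ by the smaller of the constants produced (shrinking once more if needed) gives a single $\alpha=\alpha(c)>0$ for which (DE1), \DEij{}, (DE3) and (DE4) all hold \whp, which is the assertion of the lemma.

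It is worth recalling why a decomposition of this kind should exist, since the same picture underlies the later arguments. Classically, $H$ has a $2$-core $\cC$ with all degrees $\ge2$ and $v(\cC)\ge\beta n$ for some $\beta=\beta(c)>0$ \whp, and $H\setminus V(\cC)$ is a forest whose components are trees each joined to $\cC$ at one vertex. Conditioned on its order and degree sequence, $\cC$ is a uniform random (multi)graph with minimum degree $2$ and exponentially light degree tails, and such graphs have Cheeger constant bounded below by a constant $\alpha_0(c)$, by a first-moment estimate over candidate cut sets. The pendant trees, rooted at their attachment points, are distributed like subcritical Galton--Watson trees, so a single one has size exceeding $x$ with probability at most $e^{-c_0 x}$; a first-moment bound over the $\le n$ possible roots plus a bounded-differences concentration argument (rerandomising one edge changes the relevant count by $O(\log n)$) shows that \whp, simultaneously for all $x\ge0$, at most $O(ne^{-c_0x})$ pendant trees have size $\ge x$. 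Finally, $\Delta(H)=O(\log n/\log\log n)$ \whp. One then builds $F$ from $\cC$ by deleting every vertex of degree exceeding $1/\alpha$ along with the (small) structures hanging off them; for $1/\alpha$ a large enough constant these amount to at most $\eps n$ vertices with $\eps<\beta$, so $F$ retains $\ge(\beta-\eps)n\ge\alpha v(H)$ vertices and its Cheeger constant drops by only $o(1)$.

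I expect the genuine obstacle --- and the reason to quote \cite{BKW} rather than redo it --- to be twofold: (i) the uniform-in-$n$ lower bound on $\Phi(\cC)$ for the $2$-core regarded as a random graph with a prescribed, not-quite-bounded degree sequence; and (ii) the simultaneous bookkeeping required to realise (DE1)--(DE4) with one and the same $\alpha$, in particular ensuring that the components created around the deleted high-degree vertices do not violate the tail bound \DEij{} in the range $x\asymp\log n$. By contrast the tree-size tail \DEij{} for the genuinely tree-like decorations, and the maximum-degree bound, are comparatively routine consequences of the branching-process description of $\gnp$ together with standard concentration.
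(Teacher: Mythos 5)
Your proposal is correct and takes the same approach as the paper: both treat the lemma as a direct consequence of citing Theorem~4.2 and Lemma~4.7 of \cite{BKW}, matching (DE1)--(DE3) to Theorem~4.2, (DE4) to Lemma~4.7, and shrinking to a common $\alpha$. The extra heuristic sketch you add of why such a decomposition should exist is not part of the paper's proof, but you correctly flag it as supplementary rather than a substitute for the citation.
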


Since the expansion guaranteed by \refL{LT2} is edge-expansion, we will
need to do a little work to derive the vertex expansion required to
prove Theorem \ref{T3}. The following lemma will give some
further, more elementary, properties of $\Gnc$ that suffice for
our purposes.
  Given a graph $G$ let $V_i(G)$ be the set of vertices of degree $i$, and
  let $v_i(G)=|V_i(G)|$. 

The constants $\CC\CCdef\CCedges,\CC\CCdef\CCgnc,\dots$
below may depend on $c$ and $\ga$. 

\begin{lemma}  \label{Lgnc}
  For fixed $c>1$, \Gnc{} is \whp{} such that $H=\Hnc$ satisfies the
  following properties, 
  for  suitable constants: 
\begin{romenumerate}
  \item[(P1)]
    $n':= v(H)>\gam n/2$ for some $\gam=\gam(c)>0$,
  \item[(P2)]
    $e(H)\le \CCedges v(H)$, 
  \item[(P3)]
   $v_i(H) \leq e^{-i} v(H)$ for all $i\ge\CCgnc$.
\end{romenumerate}
\end{lemma}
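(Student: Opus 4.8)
The plan is to verify (P1)--(P3) separately, each by standard first- and second-moment estimates for $\Gnc$, conditioning as needed on the event of \refL{LT2} so that the giant component $H$ is well-defined and has $v(H)=(1+o(1))\gam(c)n$ and $e(H)=\Theta(n)$ whp. Property (P1) is immediate from the classical theory of the giant component (\eg\ \cite{JLR}): for fixed $c>1$ there is $\gam=\gam(c)>0$ with $v(H)=(\gam+o_p(1))n$, so whp $v(H)>\gam n/2$; I would simply cite this. Property (P2) is equally standard: the total number of edges of $\Gnc$ is $\Bin(\binom n2,c/n)$, which is $(c/2+o_p(1))n$, hence whp $e(\Gnc)\le c n$; since $e(H)\le e(\Gnc)$ and $v(H)\ge\gam n/2$, we get $e(H)\le \CCedges\, v(H)$ with $\CCedges=2c/\gam$ (say), again whp.

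The one step that needs a short argument is (P3), the exponential decay of the degree sequence of $H$ in the high-degree range. First I would pass from $H$ to all of $\Gnc$: since $H$ is a subgraph of $\Gnc$ on a subset of the vertices, for any $i$ the number $v_i(H)$ of degree-$i$ vertices of $H$ is at most the number $\tilde v_i$ of vertices of $\Gnc$ whose $\Gnc$-degree is \emph{at least} $i$; so it suffices to show $\tilde v_i\le e^{-i}\gam n/2$ (which then is $\le e^{-i}v(H)$ by (P1)) whp, uniformly for all $i\ge\CCgnc$. Now for a fixed vertex $v$, $\deg_{\Gnc}(v)\sim\Bin(n-1,c/n)$, so $\P(\deg_{\Gnc}(v)\ge i)\le \binom{n-1}{i}(c/n)^i\le (ce/i)^i$; hence $\E\tilde v_i\le n(ce/i)^i$. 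Choose $\CCgnc$ large enough that $i\ge\CCgnc$ forces $ce/i\le e^{-4}$, say, so that $\E\tilde v_i\le n e^{-4i}$. For each such $i$, Markov's inequality gives $\P(\tilde v_i\ge e^{-i}\gam n/2)\le (2/\gam)e^{-3i}$, and summing over $i\ge\CCgnc$ yields a bound $o(1)$ (indeed $O(e^{-3\CCgnc})$); note only finitely many values $i\le n$ occur, so the union bound is harmless. Thus whp $v_i(H)\le\tilde v_i< e^{-i}v(H)$ simultaneously for all $i\ge\CCgnc$, which is (P3). Combining the three parts and intersecting with the whp event of \refL{LT2} completes the proof.

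I do not expect a genuine obstacle here; the only mild subtlety is bookkeeping --- making sure the high-degree estimate (P3) is stated for $H$ but proved via the easier dominating quantity on $\Gnc$, and keeping the constant $\CCgnc$ independent of $n$ while the union bound ranges only over the (at most $n$) relevant degrees. One should also be slightly careful that the events in (P1)--(P3) and \refL{LT2} are all defined on the same probability space and each holds whp, so their intersection does too.
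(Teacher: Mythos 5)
Your treatment of (P1) and (P2) matches the paper's; both just cite the classical convergence $v(H)/n \pto \gam(c)$ and $e(\Gnc)/n \pto c/2$. The discrepancy is in (P3), and there is a genuine gap in your argument there.

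The problem is the union bound. You bound $\P\bigl(\tilde v_i \ge e^{-i}\gam n/2\bigr)\le (2/\gam)e^{-3i}$ for each fixed $i$ by Markov, and then sum over $i\ge\CCgnc$. But that sum is $\frac{2}{\gam}\cdot\frac{e^{-3\CCgnc}}{1-e^{-3}}$, a fixed positive constant depending only on $\CCgnc$ and $\gam$ --- it does not tend to $0$ as $n\to\infty$. Your parenthetical ``$o(1)$ (indeed $O(e^{-3\CCgnc})$)'' is the moment where the error slips in: $O(e^{-3\CCgnc})$ is a constant, not $o(1)$, since $\CCgnc$ is a constant. So as written you only show the event of (P3) fails with probability at most some small constant, not w.h.p.\ --- Markov's inequality, applied term by term, is simply too weak because it gives no $n$-dependence in the failure probability for each fixed $i$.

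This is exactly the difficulty the paper's proof is designed to avoid. The paper considers the single random variable $X=\sum_{j=1}^n e^{2d_j}$, shows $\E X = O(n)$ and $\Var X = O(n)$ by a direct binomial computation, and applies Chebyshev to $X$ once to get $X\le e^{ce^2}n$ w.h.p.\ (with failure probability $O(1/n)$). Since $e^{2i}v_i(\Gnc)\le X$ deterministically, this one concentration statement immediately gives $v_i(\Gnc)\le e^{-2i}\cdot e^{ce^2}n$ simultaneously for all $i$, and the requested bound $v_i(H)\le e^{-i}v(H)$ for $i\ge\CCgnc$ then follows from (P1). The key structural idea you are missing is concentrating one exponential-moment functional of the degree sequence rather than each tail count separately. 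Your approach could be rescued by replacing Markov with a second-moment (Chebyshev) bound on each $\tilde v_i$ (since $\Var\tilde v_i = O(\E\tilde v_i) + O((\E\tilde v_i)^2/n)$ one gets per-$i$ failure probabilities that sum to $O(1/n)$), but as written the argument does not establish the w.h.p.\ claim.
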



\begin{proof}
  It is well-known that $n'/n\pto\gam(c)>0$. 
  It is also well-known and easy to see that $e(\Gnc)/n \pto c/2$. 
These two results yield (P1) and (P2).

For (P3), 
let $d_j=d_j(\Gnc)$ be the degree of vertex $j$, and let
$X$ be the random variable $\sum_{j=1}^n e^{2d_j}$. Since each $d_j$
has a binomial $\Bin(n-1,c/n)$ distribution,
\begin{equation}
  \E X = n \E e^{2d_1} = n\Bigpar{1+\frac cn(e^2-1)}^{n-1} \le n e^{c(e^2-1)}.
\end{equation}
  A similar calculation shows that
$$\Var X = n \Var(e^{2 d_1})+ n(n-1) \Cov(e^{2 d_1},e^{2 d_2}) = O(n).$$
Consequently, by Chebyshev's inequality, \whp
\begin{equation*}
 \sum_{i=0}^\infty e^{2i} v_i(\Gnc) = X \le e^{c e^2} n.
\end{equation*}
The result follows, using (P1).
\end{proof}

\begin{remark}\label{RP3}
  The proof of (P3) shows that it could be strengthened to
   $v_i(H) \leq e^{-Ci} v(H)$ for all $i\ge\CCgnc$, for any fixed $C$;
conversely, it would for our purposes be enough that
$v_i(H) \leq 2e^{-\ga i} v(H)$ for all $i$. For simplicity, we use the
version above.
\end{remark}

  Let us call a connected graph $H$ a 
  \emph{well-behaved weak $\alpha$-decorated expander} if it is a weak
  $\alpha$-decorated expander and it has properties  (P2) and (P3) in the
above lemma for some constants $\CCedges,\CCgnc$, where we for definiteness 
assume $\CCedges=\CCgnc=\ga\qw$.
  By \refL{Lgnc},  \refL{LT2} can be improved (possibly reducing $\ga$):
\begin{lemma}
  \label{LT2+}
  Fix $c > 1$. Then there is a constant $\alpha=\alpha(c)>0$ such
  that \whp{}  the giant component $H_{n,c/n}$ of $G_{n,c/n}$
  is a well-behaved weak $\alpha$-decorated expander. 
\nopf
\end{lemma}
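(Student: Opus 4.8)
The plan is to derive \refL{LT2+} by a straightforward union bound: intersect the two high-probability events supplied by \refL{LT2} and \refL{Lgnc}, having first chosen the expansion parameter small enough to absorb the (constant) thresholds coming from \refL{Lgnc}. There is no analytic content to add; the only point requiring care is that the definition of a \emph{well-behaved} weak $\alpha$-decorated expander normalises the constants in (P2) and (P3) to the single value $\ga\qw$, so $\alpha$ must be chosen to dominate both of them.

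Concretely, fix $c>1$. Let $\alpha_1=\alpha_1(c)>0$ be the constant furnished by \refL{LT2}, and let $C',C''$ denote the constants $\CCedges,\CCgnc$ of \refL{Lgnc} (all depending only on $c$). Set
\begin{equation*}
  \alpha \= \min\set{\alpha_1,\ 1/C',\ 1/C''}>0,
\end{equation*}
so that in particular $\ga\qw\ge\max\set{C',C''}$. By \refL{LT2} and \refL{Lgnc}, \whp{} the giant component $H=\Hnc$ is simultaneously a weak $\alpha_1$-decorated expander, witnessed by some subgraph $F$, and satisfies (P2) and (P3) with the constants $C',C''$. It then suffices to check that, on this event, $H$ is a well-behaved weak $\alpha$-decorated expander.

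The key (and only) observation is that each of the relevant conditions merely weakens as the parameter decreases. Indeed: (DE1) gives $\Phi(F)\ge\alpha_1\ge\alpha$; in (DE$2'$) the estimate $\bigabs{\set{i:v(D_i)\ge x}}\le e^{-\alpha_1 x}e(H)\le e^{-\alpha x}e(H)$ holds for every $x\ge0$ because $\alpha\le\alpha_1$; (DE3) a vertex of $F$ adjacent to at most $1/\alpha_1\le 1/\alpha$ of the components $D_i$ is adjacent to at most $1/\alpha$ of them; and (DE4) gives $v(F)\ge\alpha_1 v(H)\ge\alpha v(H)$. Likewise (P2) gives $e(H)\le C'v(H)\le\ga\qw v(H)$, and (P3) gives $v_i(H)\le e^{-i}v(H)$ for all $i\ge C''$, hence a fortiori for all $i\ge\ga\qw\;(\ge C'')$. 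Thus $H$ is a weak $\alpha$-decorated expander satisfying (P2) and (P3) with the normalised constants $\CCedges=\CCgnc=\ga\qw$, which is exactly a well-behaved weak $\alpha$-decorated expander. Since the whole argument is a union bound plus this monotonicity bookkeeping, there is no genuine obstacle; the one thing not to overlook is precisely the need to take $\alpha$ small enough that $\ga\qw$ dominates \emph{both} thresholds of \refL{Lgnc}, so that the normalisation built into the definition of ``well-behaved'' is consistent with what has actually been established.
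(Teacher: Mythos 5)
Your proof is correct and matches the paper's approach exactly; the paper omits a written argument for this lemma (marking it as following immediately with the remark ``possibly reducing $\alpha$''), and what you supply is precisely the intended union-bound-plus-monotonicity bookkeeping, with $\alpha$ chosen small enough so that $\alpha^{-1}$ dominates both constants from Lemma~\ref{Lgnc}.
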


  \refT{T3} now follows immediately from 
  the following deterministic lemma.

\begin{lemma} \label{lem.decexp}
  Let the connected graph $H$
  be a well-behaved weak $\alpha$-decorated expander. 
  Then \eqref{t3} holds for every Lipschitz function $f$ on $H$, for some
  $\ccb$ depending on $\ga$.
\end{lemma}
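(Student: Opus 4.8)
The plan is to move everything onto the expanding subgraph $F$ from~(DE1), prove an exponential tail for $f|_F$ there by a Cheeger-type recursion, then pay for the decorations $D_i$ using~\DEij{} and~(DE3), and finally reconcile the two notions of ``median'' that appear. Throughout one may assume $\ga\le\tfrac12$, since shrinking $\ga$ only weakens (DE1)--(DE4), (P2) and~(P3). \emph{Step 1 (tail inside $F$).} Since the Cheeger constant \eqref{cheeger} is weighted by the degree measure $\mu(S):=\sum_{v\in S}d_F(v)$, it is natural to centre $f|_F$ at a \emph{$\mu$-median} $m_F$, i.e.\ a real number with $\mu(\set{v\in V(F):f(v)>m_F})\le e(F)$ and likewise with $<m_F$. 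Put $W_t:=\set{v\in V(F):f(v)\ge m_F+t}$. For every integer $t\ge1$ one has $\mu(W_t)\le\mu(W_1)\le e(F)$, so (DE1) and \eqref{cheeger} give $e(W_t,V(F)\setminus W_t)\ge\ga\,\mu(W_t)$; as $f$ is Lipschitz on $F$, each such edge has its $W_t$-endpoint in $W_t\setminus W_{t+1}$, whence $\ga\,\mu(W_t)\le\mu(W_t)-\mu(W_{t+1})$ and hence $\mu(W_{t+1})\le(1-\ga)\mu(W_t)$. Iterating from $t=1$, using $|W_t|\le\mu(W_t)$ and $e(F)\le e(H)\le\ga\qw v(H)$ from~(P2), and adding the symmetric lower tail, I expect
\begin{equation*}
  \bigabs{\set{v\in V(F):|f(v)-m_F|\ge t}}\le C_0\,e^{-\gb t}\,v(H)\qquad(t\ge0),
\end{equation*}
with $\gb:=\log\frac1{1-\ga}$ and a constant $C_0=C_0(\ga)$, the case $t<1$ being covered by the trivial bound $v(H)$.

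\emph{Step 2 (decorations).} Fix $x\ge0$. The key point is that if $v\in D_i$ and $f(v)\ge m_F+x$, then, since $D_i$ is a connected component of $H\setminus V(F)$, a shortest $H$-path from $v$ to $V(F)$ stays inside $D_i$ until its last edge, hence has length at most $s_i:=v(D_i)$; so $D_i$ has a neighbour $u_i\in V(F)$ with $f(u_i)\ge m_F+x-s_i$, and we fix such a $u_i$ maximising $f$. Using $\bigabs{\set{v\in D_i:f(v)\ge m_F+x}}\le s_i$ and splitting on whether $s_i\ge x/2$: the decorations with $s_i\ge x/2$ contribute at most $C_1\,x\,e^{-\ga x/2}e(H)$ vertices in total, by~\DEij{} and a geometric-series estimate; each decoration with $s_i<x/2$ has $u_i\in\set{v\in V(F):f(v)\ge m_F+x/2}$, and by~(DE3) each vertex of $F$ serves as $u_i$ for at most $\ga\qw$ indices $i$, so by Step~1 there are at most $\ga\qw C_0 e^{-\gb x/2}v(H)$ of them, each contributing $\le x/2$ vertices. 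Adding the contribution $C_0 e^{-\gb x}v(H)$ of $F$ itself, converting with $e(H)\le\ga\qw v(H)$, absorbing the polynomial factor into a slightly smaller rate, and repeating the argument for $-f$, I expect to reach
\begin{equation*}
  \bigabs{\set{v\in V(H):|f(v)-m_F|\ge x}}\le 2C_2\,e^{-\gb_1 x}\,v(H)\qquad(x\ge0),
\end{equation*}
with $C_2=C_2(\ga)$ and $\gb_1=\gb_1(\ga)>0$.

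\emph{Step 3 (matching $m_F$ to $m$, and cleanup).} The last display forces $|m-m_F|\le x_0:=\gb_1\qw\log(4C_2)$: if, say, $m\ge m_F+x_0$, then $\set{f\ge m}\subseteq\set{f\ge m_F+x_0}$ has fewer than $v(H)/2$ vertices, contradicting that $m$ is a median of $f$ on $V(H)$. Replacing $x$ by $x-x_0$ then gives an exponential bound for $\bigabs{\set{v:|f(v)-m|>x}}$ when $x\ge x_0$, while the trivial bound $v(H)<2e^{-\ccb\sqrt x}v(H)$ handles smaller $x$; choosing $\ccb=\ccb(\ga)>0$ small enough makes the exponential bound drop below $2e^{-\ccb\sqrt x}v(H)$ on the rest, giving~\eqref{t3}. (This route in fact yields the stronger bound with $e^{-\gb_1 x}$ in place of $e^{-\ccb\sqrt{x}}$, so the $\sqrt{x}$ in \eqref{t3} is a harmless underestimate.) I expect Step~2 to be the fiddliest: one must check that lifting the tail from $F$ to all of $H$ costs only a constant factor in the exponent, and this works precisely because every far-out vertex of a decoration is charged either to a large decoration (which is rare, by~\DEij) or, with multiplicity at most $\ga\qw$ (by~(DE3)), to a far-out vertex of $F$. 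Running the recursion in Step~1 from the degree-weighted median $m_F$, rather than from the vertex-median of $H$, is the device that lets it start at $t=1$ in both directions and avoids having to argue that the level sets of $f|_F$ first drop below half the edge-mass at a level that is $O(1)$.
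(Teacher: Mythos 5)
Your proof is correct, and it is genuinely different from the paper's — in fact it yields a strictly \emph{stronger} conclusion (a bound of the form $2C\,e^{-\gb_1 x}v(H)$, i.e.\ a true exponential tail, rather than the stretched-exponential $2e^{-\ccb\sqrt x}v(H)$ that the paper settles for). The two devices that do the work are exactly what the paper lacks. First, you centre $f|_F$ at a degree-weighted ($\mu$-)median $m_F$ of $F$ rather than the vertex-median of $f$ on all of $H$; this makes $\mu(W_1)\le e(F)$ hold automatically, so the Cheeger inequality is applicable immediately and the level-one preamble in the paper (finding a constant fraction of $V(F)$ with $f\le\gl_1$, and then pushing the degree-mass past $e(F)$ in $O(1)$ further steps) becomes unnecessary, at the price of a one-line median-matching argument at the end. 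Second, and more importantly, you iterate the Cheeger bound in the degree measure $\mu$ rather than in vertex count: the recursion $\mu(W_{t+1})\le(1-\ga)\mu(W_t)$ is exactly what edge-expansion gives, and one converts to a vertex bound only once at the end via $|W_t|\le\mu(W_t)$. The paper instead converts to vertex counts at every step, and because the surviving degree-mass might concentrate on high-degree vertices, must invoke (P3) to recover a vertex-count lower bound on $\partial F_{>\gl}$, incurring a $1/\log(n'/t)$ slowdown that compounds to the $\sqrt x$ rate; they explicitly flag this as the bottleneck in the remarks preceding \eqref{t2b}. Your route avoids (P3) entirely and uses only (DE1), \DEij, (DE3) and (P2). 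The decoration bookkeeping in your Step~2 (splitting on $v(D_i)\gtrless x/2$, charging the small decorations via (DE3) to tail vertices of $F$, and the large ones to the exponential tail of decoration sizes) matches the paper's Step for the $D_i$'s in spirit, except that your better bound on $|F_{>t}|$ propagates through to give an exponential rather than $\sqrt{\phantom{x}}$-exponential decay for $|D_{>t}|$ as well. In short: a correct proof by a cleaner route, which removes the hypothesis (P3) and upgrades the conclusion of the lemma (and hence of \refT{T3}) from $e^{-\ccb\sqrt x}$ to $e^{-\ccb x}$.
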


\begin{proof}
  Fix a subgraph $F$ of $H$ 
  which verifies that $H$ is a weak $\alpha$-decorated expander. 
  Let $D$ be the graph $H \setminus V(F)$, and let $D_1,\ldots,D_\ellx$ be the components of $D$.
  Fix a Lipschitz function $f$ on $H$. Let $n' = v(H)$ as in Lemma \ref{Lgnc}.

We write $H_{\ge t}$ for the set of vertices $v \in V(H)$ with $f(v)
\geq t$ and define $H_{>t}$, $H_{\le t}$, $H_{<t}$ similarly.
Additionally, we write $F_{\ge t}$ (and $F_{> t}$ et cetera) for $V(F)
\cap H_{\ge t}$, and
$D_{\ge t}$ (et cetera) for $H_{\ge t} \cap V(D)= H_{\ge t} \setminus V(F)$.
We also assume as in the proof of \refT{T1} that $f$ has median $m=0$;
hence $|H_{\le0}|,|H_{\ge0}|\ge n'/2$.

Our plan of attack is as follows. First, we find a large subset of $V(F)$
consisting exclusively of vertices $v$ with $f(v)$ bounded above by a
constant. 
Such a set is not quite guaranteed by the fact that $|H_{\le 0}| \geq n'/2$,
because $H_{\le 0}$ may be largely contained within $V(H)\setminus V(F)$. 
However, we shall use properties \DEij{} and (DE3) to find such a set. Second, we use the
expansion of $F$ to 
show that the sets $F_{\ge t}$ decay rapidly in size as $t$ grows. Finally, we use the fact that the decorations $D_i$ are typically small
and do not attach to very many vertices of $F_{\ge t}$, to show that the sets $D_{\ge t}$ also decay rapidly in size as $t$ grows. We now turn
to the details.
For simplicity we prove the theorem for $x$ integer, which easily
implies the more general statement.

  For $\lambda > 0$ let $F^{\lambda}$ be the union of $F$ and all components $D_i$ with $v(D_i) < \lambda$.
  By property (P2), 
  $e(H) \leq \CCedges n'$.   
  By property \DEij, for any $\lambda > 0$ we have
\begin{equation}\label{t2a}
  \begin{split}
\sum_{\{i:~v(D_i) \geq \lambda\}} v(D_i)
&=
\sum_{j=0}^{\infty} \sum_{\{i:~2^j\lambda \leq v(D_i) < 2^{j+1}\lambda\}} v(D_i) \\
& \leq
\sum_{j=0}^{\infty} 2^{j+1}\lambda e^{-\alpha\lambda 2^j} \cdot \CCedges n'.
  \end{split}
\end{equation}

  Choose $\gl=\lambda_1$ large enough that the upper bound in (\ref{t2a}) is less than 
  $n'/4$; 
  then $F^{\lambda_1}$ contains at least $3n'/4$ vertices.
Since at most $n'/2$ vertices $v$ in $H$ have $f(v)>0$,
it follows that at least $n'/4$ of the vertices in $F^{\lambda_1}$
have $f(v) \leq 0$. 
Since each component of $F^{\lambda_1}\setminus F$ has less than
$\lambda_1$ vertices,
either $|F_{\le 0}| \geq n'/8$ or at least $n'/(8\lambda_1)$
components of $F^{\lambda_1}\setminus F$ contain a vertex
of $H_{\le 0}$. Since all vertices in $F^{\lambda_1}\setminus F$ have
distance at most $\lambda_1$ from $F$,
property (DE3) 
and the Lipschitz property of $f$
then guarantee that in either case (assuming $\gl_1>\ga$ as we may)
$$
|F_{\le \lambda_1}|
\geq \frac{\alpha n'}{8\lambda_1}
=:\cc n'.
$$

  Since every vertex of $F$ has at least one neighbour in $F$, it follows that
  $\sum_{v \in F_{\le \lambda_1}} d_F(v) \geq \ccx n'$.
  Assuming that $\sum_{v \in F_{\le\lambda_1}} d_F(v) \leq e(F)$,
  by the expansion property (DE1) we thus have
  that $e(F_{\le \lambda_1},F_{> \lambda_1}) \geq \alpha \ccx n'$.
  The Lipschitz property of $f$ implies that each edge
  in $E(F_{\le \lambda_1},F_{> \lambda_1})$ has one endpoint in
  $F_{\le\lambda_1+1}\setminus F_{\le \gl_1}$, and thus
\begin{equation*}
\sum_{v \in F_{\le\lambda_1+1}\setminus F_{\le \gl_1}} d_F(v)
\ge
e(F_{\le \lambda_1},F_{> \lambda_1}) \geq \alpha \ccx n'.
\end{equation*}
  Repeatedly applying property~(DE1) 
  in this manner, and using property~(P2), we see that \whp{}
  $\sum_{v \in F_{\le \lambda_2}} d_F(v)> e(F)$, where we may take
  $\lambda_2 = \lambda_1+ \CCedges/(\alpha \ccx) +1$.

  We next apply the expansion of $F$  and properties (P2)--(P3) 
  to bound the sizes of sets $F_{>\lambda_2+i}$ for positive integers $i$.
  As $i$ becomes large and the sets $F_{>\lambda_2+i}$ become small, the
  proportion of the sum $\sum_{v \in F_{> \lambda_2+i}} d_F(v)$ due to vertices of large degree
  may increase; this is the reason we are only able to show that the sizes of
  the sets $F_{> \lambda_2+i}$ decay exponentially quickly in $\sqrt{i}$.

  For given $x>0$, let $a_x$ be the smallest integer $\ge\CCgnc$ such that
  $\sum_{i>a_x}^{\infty} ie^{-i} \leq \alpha x/2$.  Since
  $\sum_{i>a}^{\infty} ie^{-i} \leq \sum_{i>a}^{\infty} e^{-i/2} \leq 3e^{-a/2}$,
  there exists $\CC$ large enough that $a_x \leq \CCx \log (1/x)$ for all $x\le 1/2$.

  For $\lambda \geq \lambda_2$, if $t'=\sum_{v \in F_{> \lambda}} d_F(v)$ then
  $t'\leq e(F)$ by our choice of $\lambda_2$.  For 
  $0 \leq t \leq t'$, we thus have $e(F_{> \lambda},F_{\le \lambda}) \geq \alpha t'\ge \ga t$ by (DE1).
\newcommand\dfl{\partial F_{>\gl}}
  Let
$$\dfl=\set{v\in F_{> \lambda}: v \text{ has a neighbour in }F_{\le \lambda}}.
$$
  Then for any $t$ as above,
  $\sum_{v \in \dfl} d_F(v) \geq e(F_{> \lambda},F_{\le \lambda}) \geq\alpha t$.
  Also, applying property~(P3) and using the definition of $a_{t/n'}$, 
$$\sum_{v \in F} d_F(v)\ett{d_F(v) > a_{t/n'}} \leq \sum_{i>a_{t/n'}}^{\infty} ie^{-i} \cdot n' \leq \alpha t /2$$
  and so
$$\sum_{v \in \dfl} d_F(v)\ett{d_F(v) \leq a_{t/n'}} \geq \alpha t - \sum_{v \in F} d_F(v)\ett{d_F(v) > a_{t/n'}}
 \geq \alpha t/2.$$
  Hence. assuming also that $t \leq n'/2$,
  \begin{equation}\label{mr}
\lrabs{\dfl}
\geq\frac{\ga t/2}{a_{t/n'}}
\geq \frac{\ga t}{2\CCx \log (n'/t)}
:= \cc\frac{t}{\log (n'/t)}.
  \end{equation}
  Now fix $\lambda\geq \lambda_2$. Taking $t=|F_{> \lambda}| \leq \sum_{v \in F_{> \lambda}} d_F(v) = t'$,
  we also have $t \leq |H_{> 0}| \leq n'/2$, so 
\eqref{mr} applies with this choice of $t$.
  Furthermore, the Lipschitz property of $f$ implies that $\dfl\subseteq F_{\le\gl+1}$, and so
\begin{equation*}
  |F_{> \lambda+1}|\le|F_{> \lambda}|-|\dfl| \leq t(1-\ccx/\log(n'/t)).
\end{equation*}
  Next, for integers $i \geq 1$, let $k_i = \ceil{i/\ccx}$.
  Then for all  $t \geq n'/2^i$, we have $(1-\ccx/\log(n'/t))^{k_i}<1/2$.
It follows immediately that for all integers $i\ge1$ we have
$$
  |F_{> \lambda_2 + \sum_{j=2}^i k_j}| \leq \frac{n'}{2^{i}},
$$
  so there is $\CC > 0$ such that for all real $x \geq 1$,
and trivially for $0\le x\le1$,
\begin{equation}\label{t2b}
  |F_{>\CCx x^2}| \leq \frac{n'}{2^x}.
\end{equation}

  We now deal with the elements of the `decorations' graph $D$, 
  and assume that its components $D_1,\ldots,D_\ellx$ are listed so that $v(D_1) \geq \cdots \geq v(D_\ellx)$.
  We first remark that by \DEij{} and (P2), 
  if $m_k$ is the number of components $D_i$ of $D$ with $v(D_i) \geq k$, 
  then $m_k\le \CCedges n' e^{-\alpha k}$ for all integers $k \geq 1$.
  Hence, for any real $t$ with $0 < t \le n'$, we have, with $x=\log(\CCedges n'/t)/\ga$,
\begin{equation}\label{t2c}
  \begin{split}
\sum_{j=1}^{\floor t} v(D_j)
& =
 \sum_{k=1}^{\infty} \min\bigpar{\floor t,m_k}
\le
\sum_{k=1}^{\infty} \min\bigpar{t,\CCedges n' e^{-\ga k}}
\\&
=
\sum_{k\le x} t+\sum_{k> x} \CCedges n' e^{-\ga k}
\le
\CC t(\log n' + 1 -\log t).
  \end{split}
\end{equation}

  Next, for $w \in V(D)$, let $D^w$  
  be the component of $D$ containing $w$ and fix 
  an arbitrary vertex $u^w$ of $F$ that is decorated with $D^w$.
  By (DE3), for any set $S \subseteq V(F)$ with $|S|\le s$,
  the total number of components that decorate $S$ is at most $s/\alpha$.
  It then follows from \eqref{t2c} that
\begin{equation}\label{t2d}
  |\{w\in V(D)~:~u^w \in S\}|
 \leq  \sum_{j=1}^{\lfloor s/\alpha\rfloor} v(D_j)
\le \CC s(\log n' + 1 -\log s)
\CCdef\CCttd
\end{equation}
  if $s\le \ga n'$, and by taking $\CCttd \geq 1/\alpha$ we see that the
  inequality in fact holds for all $s \leq n'$.
  For $i \geq 0$, 
  if $w \in D_{>i}$ then one of the following two events must occur.
\begin{itemize}
\item[(a)] $v(D^w) \geq 3i/4$.
\item[(b)] $d(w,u^w)<3i/4$ and then $u^w \in F_{> i/4}$. 
\end{itemize}
  By \DEij{} and (P2), 
\begin{equation}\label{3i4}
  |\{w \in D: v(D^w) \geq 3i/4\}|
  \leq \sum_{j \geq 3i/4} j \cdot \CCedges n' e^{-\alpha j} 
\leq \CC n' (i+1) e^{-3\alpha i/4}.
\end{equation}
  Furthermore, by (\ref{t2b}),
$$
  |F_{> i/4}| \leq n'/2^{\cc\sqrt{i}}
$$
  and thus by (\ref{t2d}) we have
$$
  |\{w \in D:u^w \in F_{>i/4}\}|
  \leq \CCttd \frac{n'}{2^{\ccx \sqrt{i}}}\left(1+\ccx\sqrt{i}\log 2\right),
$$
  so for all $i$ we have 
\begin{equation}\label{i4}
  |\{w \in D:u^w \in F_{>i/4}\}| \leq \CC{n'}{e^{-\cc\sqrt{i}}}
\end{equation}
  for suitable constants $\CCx$ and $\ccx>0$.
  Thus, by \eqref{3i4} and \eqref{i4},
\begin{equation}
  |D_{>i}| \le
  |\{w \in D: v(D^w) \geq 3i/4\}| + |\{w \in D: u^w \in F_{> i/4}\}|
  \leq \CC n'
e^{-\ccx\sqrt{i}}.
\end{equation}
  Hence, using this together with (\ref{t2b}) to bound $|F_{>i}|$,
  we have
\begin{align*}
|H_{>i}|     = |F_{>i}|+|D_{>i}|
         \leq  \CC e^{-\cc\sqrt{i}} n' 
\CCdef\CCy \ccdef\ccy
\end{align*}
  for fixed $\CCy$ sufficiently large.
  Now note that $-f$ is also a Lipschitz function on $H$ with a median $0$,
  and so for all $i \geq 0$ 
$$ | \{ v:|f(v)| >i\}| \leq 2   \CCy e^{-\ccy\sqrt{i}} n' .$$
  To complete the proof, let $i_0 >0$ satisfy 
$2 \CCy e^{-\ccy\sqrt{i_0}} \leq 1$;
  and then choose $\cc$ with $0<\ccx \leq \ccy$ satisfying $2
  e^{-\ccx\sqrt{i_0}} > 1$.
  Now $2 e^{-\ccx\sqrt{i}}> \min \{1,2 \CCy e^{-\ccy\sqrt{i}}\}$ for each $i \geq 0$; and so
  $ | \{ v:|f(v)| >i\}| < 2 e^{-\ccx\sqrt{i}} n' $ for all $i \geq 0$, and the theorem follows.
\end{proof}


\section{$G_{n,(1+\eps)/n}$ with $\eps \rightarrow 0$,
  $\eps \gg n^{-1/3}$.}
\label{Sgnp}

  In this section we consider the barely supercritical case, and prove Theorem~\ref{behaves}.
Fix a function $\eps=\eps(n)$ as above and let
$p=(1+\eps)/n$. As above, denote by $H_{n,p}$ the largest
component
of $G_{n,p}$. Additionally, write $C_{n,p}$ (resp.~$K_{n,p}$) for the
core (resp.~kernel) of $H_{n,p}$.
For such $\eps$, it is known (see \cite{luczak91cycles} and also
\cite{JLR}, Chapter 5) that \whp{}
\begin{align}
  v(H_{n,p}) & = (1+o(1)) 2\eps n, \nonumber\\
  v(C_{n,p}) & = (1+o(1)) 2 \eps^2 n,\mbox{ and} \label{sizes}\\
  v(K_{n,p}) & = (1+o(1)) \frac{4}{3} \eps^3 n. \nonumber
\end{align}
For a connected graph $G$, we write $\kappa(G) = e(G)-v(G)$, and
call $\kappa$ the \emph{excess} of $G$.
A moment's reflection reveals that $\kappa(H_{n,p}) = \kappa(C_{n,p})
= \kappa(K_{n,p})$, and
it is known (\cite{janson93birth, JLR, luczak90component}) that for
$\eps$ as above, \whp{}
\begin{align}
\kappa(H_{n,p}) & = (1+o(1)) \frac{2}{3}\eps^3 n. \label{excess}
\end{align}
We fix $\delta < 1/10$ and say $H_{n,p}$ \emph{behaves} if
$$
  (2-\delta) \eps n \leq v(H_{n,p}) \leq (2+\delta) \eps n,
$$
  and if similar inequalities hold for $v(C_{n,p})$, $v(K_{n,p})$, and $\kappa(H_{n,p})$.
  By the above comments, \whp{} $H_{n,p}$ behaves. Using this fact and one further lemma, we
  may prove Theorem~\ref{behaves}.

The complement $\hnp\setminus V(\cnp)$ of the core in the largest
component $\hnp$ is a forest consisting of trees that are attached to
the core by (exactly) one edge each. We call these trees
\emph{pendant}, and denote them (in some order) by $T_1,\dots,T_N$.
We begin with an estimate of the maximum size of the pendant trees.

\begin{lemma}\label{Lmaxtree}
There exists  a constant $\CC$ such that \whp
\begin{equation}
\label{pavlov}
  \max_i v(T_i) \le\CCx \eps\qww\log(n\eps^3).
\end{equation}
\CCdef{\CCpavlov}
\end{lemma}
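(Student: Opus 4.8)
The plan is a union bound over the pendant trees, exploiting that a pendant tree---being the part of $\gnp$ that is stripped away in passing to the core---behaves like a \emph{subcritical} Galton--Watson tree, with offspring mean the conjugate dual of $1+\eps$, namely $1-\eps+O(\eps^2)\le 1-\eps/2$. Write $m=C\eps^{-2}\log(n\eps^3)$ for a large absolute constant $C$, and let $N$ be the number of pendant trees. Since $\P\bigpar{\max_i v(T_i)>m}\le\E\bigabs{\set{i:v(T_i)>m}}$, it suffices to show that this expectation tends to $0$.

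I would assemble three ingredients. \emph{(i) Domination.} Conditionally on $\cnp$ together with the set of attachment points of the pendant trees, each $v(T_i)$ is stochastically dominated by $|\tau|$, where $\tau$ is the Galton--Watson tree with offspring distribution $\Po(1-\eps/2)$. This is the standard ``anatomy of the barely supercritical component'': exploring a pendant tree outward from its attachment vertex, given that the exploration dies out before reaching the core, is a branching exploration conditioned on extinction, hence subcritical with the dual parameter, and $1-\eps+O(\eps^2)\le1-\eps/2$ since $\eps\to0$; see e.g.\ \cite{luczak90component} or later refinements. \emph{(ii) Count.} $\E N=O(\eps^2 n)$. Indeed $N=e\bigpar{V(\cnp),\nn\setminus V(\cnp)}$---every edge from the core to its complement is the unique attaching edge of some pendant tree---and combining $v(\cnp)=O(\eps^2 n)$ (as in \eqref{sizes}) with the fact that the core vertices have bounded mean degree (the probability that a given half-edge at a vertex is ``core-reaching'' is $O(\eps)$, so a vertex of degree $d$ lies in the core with probability $O(d^2\eps^2)$) gives $\E\sum_{v\in V(\cnp)}\deg(v)=O(\eps^2 n)$, hence $\E N=O(\eps^2 n)$. \emph{(iii) Tail.} By the Otter--Dwass formula, $\P(|\tau|=k)=\tfrac1k\P\bigpar{\Po\bigpar{(1-\eps/2)k}=k-1}$, and Stirling's formula turns this into $\P(|\tau|=k)\le C_1 k^{-3/2}e^{-k\eps^2/8}$ for an absolute constant $C_1$; summing the (geometric) tail, $\P(|\tau|\ge m)\le C_2\eps^{-2}m^{-3/2}e^{-m\eps^2/8}$.

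Conditioning on the data in (i) and writing $\mathcal F$ for it, $\E\bigsqpar{\bigabs{\set{i:v(T_i)>m}}\mid\mathcal F}\le N\,\P(|\tau|>m)$ since $N$ is $\mathcal F$-measurable, so by (ii) and (iii)
\[
  \E\bigabs{\set{i:v(T_i)>m}}\le\E N\cdot\P(|\tau|>m)=O(\eps^2 n)\cdot O\bigpar{\eps^{-2}m^{-3/2}e^{-m\eps^2/8}}=O\bigpar{n\,m^{-3/2}e^{-m\eps^2/8}}.
\]
With $m=C\eps^{-2}\log(n\eps^3)$ we have $e^{-m\eps^2/8}=(n\eps^3)^{-C/8}$ and $m^{-3/2}\le C^{-3/2}\eps^3\bigpar{\log(n\eps^3)}^{-3/2}$, so the right-hand side is $O\bigpar{(n\eps^3)^{1-C/8}\bigpar{\log(n\eps^3)}^{-3/2}}$, which tends to $0$ once $C>8$, because $n\eps^3\to\infty$. (One should check the constants in (i)--(iii) are uniform over the admissible $\eps(n)$, which is routine.)

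The crux is ingredient (i). A naive first moment over pendant trees regarded merely as subgraphs of $\gnp$ does \emph{not} suffice: for $\ell\gtrsim n\eps^2$ the number of labelled trees on $\ell$ vertices carrying one external edge outweighs the factor $(1-p)^{\Theta(\ell)}$ penalising the remaining $\Theta(\ell n)$ potential external edges, and the first moment is then exponentially large---so the needed subcriticality genuinely comes from conditioning on being ``stripped away'', i.e.\ from the branching structure of $\hnp$, not from edge counting. Granted (i), the decisive feature is the polynomial prefactor $k^{-3/2}$ in the Galton--Watson tail (the $1/k$ of Otter--Dwass times the $k^{-1/2}$ of Stirling): it is exactly what upgrades a bound of shape $O\bigpar{n\,e^{-\Theta(\eps^2 m)}}$, which would only give $m\asymp\eps^{-2}\log n$, to one of shape $O\bigpar{(n\eps^3)^{1-\Theta(1)}}$, which yields the stated $m\asymp\eps^{-2}\log(n\eps^3)$.
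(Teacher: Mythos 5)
Your argument is correct in outline but takes a genuinely different route from the paper's. The paper conditions on $V(\hnp)$ and $\cnp$, notes that removing all core edges leaves a uniformly random forest of rooted trees with $M=v(\cnp)$ roots and $m=v(\hnp)-M$ non-roots, and then (having checked $m/M\to\infty$, $m/M^2\to0$) simply quotes Pavlov's theorem for the max tree size, namely $(1+o(1))\tfrac{2m^2}{M^2}\log(M^2/m)$, which on substituting $M\asymp\eps^2 n$, $m\asymp\eps n$ gives the stated bound in one line. Your route replaces this citation by a union bound with an explicit Galton--Watson tail, and your diagnosis of \emph{why} this recovers $\log(n\eps^3)$ rather than $\log n$ --- the polynomial $k^{-3/2}$ prefactor coming from Otter--Dwass plus Stirling --- is exactly right and is, in effect, an explanation from first principles of why Pavlov's answer has $\log(M^2/m)$ in it rather than $\log M$. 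Your remark that a naive first moment over labelled subtrees fails is also a good sanity check.

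That said, ingredients (i) and (ii) carry the whole proof and both are stated more confidently than they are justified. For (i): the sizes $(v(T_i))_i$ conditionally on the core sum to $m$, so a clean stochastic domination of each marginal by an unconditioned subcritical GW tree is not automatic. The honest way to get it is via the very random-forest description the paper uses: a uniform forest of $M$ rooted trees on $M+m$ vertices has the same law as $M$ i.i.d.\ $\Po(\lambda)$ GW trees conditioned on total size $M+m$, for any $\lambda$; taking $\lambda=m/(M+m)\approx 1-\eps$ makes the conditioning event typical, and one then checks by a local CLT that the conditioning changes the marginal tail only by an $O(1)$ factor for $k\lesssim\sqrt{M\Var|\tau|}\asymp\sqrt{n/\eps}$, which comfortably covers $k\asymp\eps^{-2}\log(n\eps^3)$. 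This is precisely the analysis Pavlov's theorem encapsulates, so one is essentially re-deriving a piece of that theorem. For (ii): the bound $\E N=O(\eps^2 n)$ is load-bearing. The trivial $N\le m=O(\eps n)$ is \emph{not} enough: it leaves a stray $\eps^{-1}$ in the final estimate, and since $\eps^{-1}$ can be polynomially large in $n$ while $n\eps^3$ grows only iterated-logarithmically, $(n\eps^3)^{1-C/8}\eps^{-1}$ need not tend to $0$ for any fixed $C$. Your heuristic ($\P(v\in V(\cnp)\mid\deg v=d)=O(d^2\eps^2)$) is correct but would need to be proved. The paper sidesteps (ii) entirely by bounding $\max_w v(T^*_w)$ over the $M$ rooted trees (one per core vertex) rather than over the pendant trees $T_i$, so that the relevant tree count is literally $M=v(\cnp)=O(\eps^2 n)$ with no further argument; you could do the same by noting $\max_i v(T_i)\le\max_w v(T^*_w)$.
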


\begin{proof}
  We create another forest by removing all edges in the core
  $\cnp$ from $\hnp$; the result is a forest where each component
  consists of a single vertex in $V(\cnp)$ together with all pendant
  trees attached to it (if any). We regard these trees as rooted, with
  the vertices in $V(\cnp)$ as the roots, and denote them by
  $T^*_w$,  $w \in V(\cnp)$.

  Conditioned on $V(\hnp)$ and $\cnp$, this forest $\set{T^*_w}_{w}$
  is a uniformly distributed forest of rooted trees, with given sets of
  $M:= v(\cnp)$ roots and $m:= v(\hnp)-M$ non-roots.

  The maximum size of a tree in a random forest of rooted trees has
  been studied by Pavlov \cite{Pavlov77} (see also \cite[Section
  3.6]{kolchin86mappings} and \cite{Pavlov}). In our case we have, if $\hnp$
  behaves and $n$ is large enough,
$(2-\gd)n\eps^2 \le M \le (2+\gd)n\eps^2$
and 
$(2-2\gd)n\eps \le m \le (2+\gd)n\eps$. In particular, $m/M\to\infty$
  and $m/M^2 \le (n\eps^3)\qw\to0$. This is the range of
  \cite[Theorem 3 (and the remark following it)]{Pavlov77}, which
  implies that \whp, conditioned on $M$ and $m$,
\begin{equation*}
  \max_w v(T^*_w)=(1+o(1))\ \frac{2m^2}{M^2} \log\parfrac{M^2}{m}
\le\CCx \eps\qww\log(n\eps^3).
\end{equation*}
  The same estimate thus holds unconditionally \whp, and the result
  follows since every pendant tree is contained in some $T^*_w$.
\end{proof}

\begin{proof}[Proof of Theorem \ref{behaves}]
Since $H_{n,p}$ behaves \whp{,} it suffices to prove that \emph{given}
that $H_{n,p}$ behaves, \whp{} $\spr(H_{n,p}) =
\Omega(1/\eps^2)$.
We shall define a Lipschitz function $f$ on the vertices of $H_{n,p}$ for which,
given that $H_{n,p}$ behaves, \whp{} $\Var(f) \geq \gamma/\eps^2$ for some fixed $\gamma > 0$.
We define $f$ in a few steps, starting from the core. We say that $e \in E(K_{n,p})$ has \emph{length} $\ell(e)$ if the path in $C_{n,p}$ corresponding to $e$ contains
$\ell(e)$ edges (so $\ell(e)-1$ internal vertices). Since $H_{n,p}$ behaves,
\begin{align}
  e(K_{n,p})    & = v(K_{n,p}) + \kappa(K_{n,p}) \nonumber\\
            & \leq \Bigpar{\frac{4}{3} + \delta}\eps^3 n +
\Bigpar{\frac{2}{3} + \delta}\eps^3 n \nonumber\\
            & = (2+2\delta)\eps^3 n,\label{eknp}
\end{align}
and
\begin{align}
|V(C_{n,p})\setminus V(K_{n,p})|    & \geq (2-\delta)\eps^2 n - \Bigpar{\frac{4}{3} + \delta}\eps^3 n \nonumber\\
                            & \geq (2-2\delta)\eps^2 n,\label{cminusk}
\end{align}
for $n$ sufficiently large.

We say that an edge $e \in E(K_{n,p})$ is \emph{short} if
$$
\ell(e) \leq \left\lfloor \frac{1-\delta}{2\eps(1+\delta)}\right\rfloor
$$
(and is long otherwise), and that $v \in V(C_{n,p})\setminus V(K_{n,p})$ is \emph{useless} if it is contained in a path corresponding to a short edge (and is useful otherwise).
By (\ref{eknp}) and (\ref{cminusk}), the number of useful vertices is at least
\begin{align}
  |V(C_{n,p})\setminus V(K_{n,p})|-e(K_{n,p}) \cdot \frac{1-\delta}{2\eps(1+\delta)} \geq (1-\delta)\eps^2 n. \label{usefulsize}
\end{align}
Next, let $r=r(n)$ be the largest integer divisible by 3 and with $2r \leq (1-\delta)/(2\eps(1+\delta))$. For each long edge $e \in E(K_{n,p})$, let $P_e$ be the
path in $C_{n,p}$ corresponding to $e$ (so the endpoints of $P_e$ are in $K_{n,p}$), and let $P_e'$ be a sub-path of $P_e$, not containing
the endpoints of $P_e$, which is as long as possible subject to the condition that $2r$ divides
  $v(P_e')$ (picked according to some rule); such a sub-path certainly exists since
$$
  v(P_e) =e(P_e)+1 \geq \left\lfloor \frac{1-\delta}{2\eps(1+\delta)}\right\rfloor + 2 \geq 2r+2,
$$
  so $P_e$ has at least $2r$ internal vertices. Since $P_e$ has $v(P_e)-2$ internal vertices, we also have that
  $v(P_e') \geq (v(P_e)-2)/2$, so by (\ref{eknp}) and (\ref{usefulsize}),
\begin{align}\label{subpaths}
|\{v:v \in P_e'\mbox{ for some } e \in E(K_{n,p})\}|    & \geq \sum_{\{e:e~\mathrm{is~long}\}} \frac{v(P_e)-2}{2} \nonumber\\
                                        & \geq \frac{(1-\delta) \eps^2 n}{2} - 2(1+\delta)\eps^3 n \nonumber\\
                                        & \geq \frac{(1-\delta)\eps^2 n}{3},
\end{align}
for $n$ large enough.
We now define the restriction of $f$ to $V(C_{n,p})$ as follows.
\begin{itemize}
\item If $v \in V(K_{n,p})$, $v$ is useless, or $v$ is not in $P_e'$
  for any long edge $e$, then set $f(v)=0$.
\item For each long edge $e$, repeat the sequence of values $12\ldots
  (r-1)rr(r-1)\ldots 1$ along $P_e'$
(so if $v$ is the $i$'th or $(2r+1-i)$'th vertex $\mod 2r$ along some
  path $P_e'$ then $f(v)=i$).
\end{itemize}
To extend $f$ from $C_{n,p}$ to the remainder of $H_{n,p}$, for each
vertex $v \in V(H_{n,p})$, we define
the \emph{point of attachment} $a(v)$ to be the vertex $x \in C_{n,p}$
whose distance from $v$ in $H_{n,p}$ is minimum, and we set
$f(v)=f(a(v))$. 
In other words, for each pendant tree $T$ in $H_{n,p}$ that hooks up
to the core at $v \in V(C_{n,p})$, we set $f(w)=f(v)$ for all $w \in
V(T)$.

To analyze the variance of $f$, for $i=1,2,3$, let
$$
B_i = \left\{v \in V(C_{n,p})~:~ \frac{i-1}{3}r < f(v) \leq \frac{i}{3}r\right\},
$$
and let $B_0$ be all remaining vertices of $C_{n,p}$, i.e., those
with $f(v)=0$.
By the definition of $f$ and since $3$ divides $r$, the sizes of $B_1,B_2$, and $B_3$ are identical, and are each at least $(1-\delta) \eps^2 n/9$.
Also, for $i=1,2,3$, let $B_i^+$ be the set of vertices $v \in V(H_{n,p})$ with $a(v) \in B_i$.
We will prove the following assertion:
\begin{itemize}
\item[($\star$)] given that $H_{n,p}$ behaves, \wvhp{} $|B_i^+| \geq
  \eps n/44$ for each $i=1,2,3$. 
\end{itemize}
Assuming for the moment that ($\star$) holds, we can quickly complete the
proof of the theorem. 
For each graph $\hnp$ which behaves, the corresponding
(fixed) function $f$ satisfies 
\begin{align*}
  \Var(f) &= \frac1{2(n')^{2}} \sum_{x,y \in V(\hnp)}(f(x)-f(y))^2\\
& \geq  (n')^{-2} \sum_{x \in B_1^+} \sum_{y \in B_3^+}(f(x)-f(y))^2\\
& \geq  (n')^{-2} |B_1^+||B_3^+| \ r^2/9\\
& \geq
\frac{(\eps n/44)^2}{((2+\delta)\eps n)^2}\frac{r^2}{9}\\
&= \frac{r^2}{69696 (1+\delta/2)^2}.
\end{align*}
But $r=\Omega(1/\eps)$, and so it follows that, conditional on the
event that $\hnp$ behaves, \whp\ $\Var(f) =
\Omega(\eps^{-2})$,
as needed.

It thus remains to prove ($\star$), and we now turn to this.
Let $X=|B_1^+|$, the number of vertices $v \in V(H_{n,p})$ with $a(v) \in B_1$. Our aim is to show that $\P\{X \geq \eps n/44\} = 1-o(1)$.

We note that given $C_{n,p}$, we can specify $H_{n,p}$ by listing the pendant subtrees of $H_{n,p}$, and their points of attachment in $C_{n,p}$, as
$T_1,\ldots,T_N$ and $U_1,\ldots,U_N$.
By routine calculation it is easily seen that
given $C_{n,p}$ and the pendant subtrees $T_1,\ldots,T_N$, the
points of attachment $U_1,\ldots,U_N$ are independent and uniformly random elements of $V(C_{n,p})$.
We further note that given $C_{n,p}$ and the pendant subtrees $T_1,\ldots,T_N$, we can determine whether or not $H_{n,p}$ behaves.
Then, recalling \refL{Lmaxtree}, 
\begin{align}
\P\{X \geq \eps n/44\}  & \geq \inf_{\cS} \P\{X \geq \eps n/44~|~C_{n,p},T_1,\ldots,T_N\}-o(1),
\end{align}
where $\cS$ represents all possible choices of $C_{n,p}$ and $N$ and
$T_1,\ldots,T_N$ for which $H_{n,p}$ behaves and
\eqref{pavlov} holds. 
  Fix any such choice and let $t_i = v(T_i)$ for $i=1,\ldots,N$. To
shorten coming formulae, let
$$
\Pc{\cdot} = \P\{\cdot~\vert~C_{n,p},T_1,\ldots,T_N\},
$$
and define $\Ec$ and $\Varc$ similarly. 
Given $C_{n,p}$ and $T_1,\ldots,T_N$, we may write $X$ as
$$
X = |B_1| + \sum_{i=1}^N t_i \ett{U_i \in B_1}.
$$
  Since $H_{n,p}$ behaves, by the estimates above,
$$
  \frac{|B_1|}{v(C_{n,p})} \geq \frac{(1-\delta)\eps^2
  n/9}{(2+\delta)\eps^2 n}
\ge \frac{1-\delta}{18(1+\delta)} \geq \frac{1}{22}.
$$
Since the points of attachment $U_1,\ldots,U_N$ of $T_1,\ldots,T_N$ in $C_{n,p}$ are uniform and $\sum_{i=1}^N t_i = |V(H_{n,p})\setminus V(C_{n,p})|$,
it thus follows that 
\begin{equation}
\label{klm}
  \Ec(X) =
  |B_1| + \frac{|B_1|}{v(C_{n,p})}\cdot|V(H_{n,p})\setminus V(C_{n,p})|
> \frac{\eps n}{22},
\end{equation}
the preceding inequality holding for $n$ sufficiently large since
$H_{n,p}$ behaves.
Next, given $C_{n,p}$ and $T_1,\ldots,T_N$,
$|B_1|$ is determined and 
$X-|B_1|$ is a sum of independent random variables
$t_i\ett{U_i \in B_1}$, $i=1,\ldots,N$.
Hence, 
\begin{equation*}
  \Varc(X)=
\sum_{i=1}^N t_i^2 \Varc(\ett{U_i \in B_1})
\le
\sum_{i=1}^N t_i^2.
\end{equation*}
By Chebyshev's inequality, when $n$ is large enough that \eqref{klm} holds,
we thus have
\begin{align*}
\Pc{X < \frac{\eps n}{44}}
 & \leq \frac{\sum_{i=1}^N t_i^2}{(\eps n/44)^2}.
\end{align*}
Since we have assumed that \eqref{pavlov} holds, and that $H_{n,p}$ behaves,
\begin{equation*}
  \sum_{i=1}^N t_i^2 \le   \max_{1\le i\le N} t_i\cdot \sum_{i=1}^N t_i
\le\CC\eps\qww\log(n\eps^3)\cdot n\eps
\end{equation*}
and thus, for $n$ large enough,
$$
\Pc{X < \frac{\eps n}{44}}
\leq \CC \frac{n\eps\qw\log(n\eps^3)}{(\eps n)^2 }
=\CCx \frac{\log(n\eps^3)}{n\eps^3 }
 \rightarrow 0
$$
as $n \to \infty$.
An identical argument yields the same lower bound with $X$ equal to $|B_2^+|$ or $|B_3^+|$.  (We do not actually care about $|B_2^+|$.) This establishes ($\star$) and completes the proof.
\end{proof}


\section{Regular graphs with large degrees}
\label{sec.large}

We saw that \whp\ the random regular graph \gnd\ has bounded
spread for any fixed $d \geq 3$, and similarly the random graph
\Hnc \ has bounded spread for any fixed $c>1$.
  As noted in the introduction, the minimum possible values of the spread
  (achieved for the complete graph $\Kn$) are $1/4$
  if $n$ is even and $1/4 - 1/(4n^2)$ if $n$ is odd. 
This suggests another natural question for random graphs. How
large must degrees be for the spread to be close to $1/4$?
We shall see that for random regular graphs, what is needed is simply for the
degree $d$
to be big enough.

Firstly we note the deterministic result that the average degree
must be large
in order for the spread to be close to $1/4$, 
and then we give a matching result that for random regular graphs 
graphs high degree 
 is sufficient.
\begin{proposition} \label{lem.avdeg}
  For any fixed $d \geq 2$ there exists $\delta>0$ such that if the
  connected graph $G$ has average degree at most $d$ and $v(G) \geq 3d$ then $\spr(G) \geq 1/4 + \delta$.
  (We can take $\gd=1/(6d)$.) 
\end{proposition}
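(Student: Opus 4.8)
The plan is to exhibit, for every such $G$, one explicit Lipschitz function $f\colon V(G)\to\{-\tfrac12,\tfrac12,\tfrac32\}$ with $\Var f>\tfrac14+\tfrac1{6d}$. The picture to keep in mind is a balanced $\pm\tfrac12$ split of $V(G)$ --- which on its own already has variance essentially $\spr(\Kn)=\tfrac14$ --- modified by \emph{promoting} a linearly large set of vertices from $\tfrac12$ up to $\tfrac32$; the gain from the promotion is what lifts the variance past $\tfrac14$, and small average degree is what guarantees enough vertices may be promoted.

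First I would locate the set to promote. Since $G$ has average degree at most $d$, $2e(G)\le dn$, so fewer than $n/2$ vertices have degree $\ge 2d+1$; hence at least $n/2$ vertices have degree $\le 2d$, and these induce a subgraph of maximum degree $\le 2d$, which is $(2d+1)$-colourable and so contains an independent set of size $\ge n/(2(2d+1))$. As $n/(4(d+1))\le n/(2(2d+1))$, I may fix an independent set $I$ consisting of vertices of degree $\le 2d$ with $s:=|I|=\floor{n/(4(d+1))}$. Writing $N(I)$ for the set of vertices with a neighbour in $I$, independence of $I$ gives $N(I)\cap I=\emptyset$ and $|N(I)|\le 2d\,s$, whence $|I\cup N(I)|\le(2d+1)s\le\tfrac n2-s$.

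Next I would build $f$. Pick $V^+\supseteq I\cup N(I)$ with $|V^-|:=n-|V^+|$ an integer within $\tfrac12$ of $\tfrac n2+s$; this is feasible exactly because $|I\cup N(I)|\le\tfrac n2-s$. Put $f\equiv\tfrac32$ on $I$, $f\equiv\tfrac12$ on $V^+\setminus I$, and $f\equiv-\tfrac12$ on $V^-$. Then $f$ is Lipschitz: every neighbour of a vertex of $I$ lies in $N(I)\subseteq V^+\setminus I$ and so has value $\tfrac12$, at distance $1$ from $\tfrac32$; and no vertex of $V^-$ is adjacent to $I$ (since $I\cup N(I)\subseteq V^+$), so every other edge carries a pair of values lying in $\{-\tfrac12,\tfrac12\}$ or in $\{\tfrac12,\tfrac32\}$. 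A short computation gives $\sum_v f(v)^2=\tfrac n4+2s$, independently of the choice of $V^+$, so $\E f^2=\tfrac14+\tfrac{2s}n$; and $\E f=\tfrac1n\Bigpar{\tfrac n2+s-|V^-|}$ satisfies $|\E f|\le\tfrac1{2n}$ by construction. Hence $\Var f=\E f^2-(\E f)^2\ge\tfrac14+\tfrac{2s}n-\tfrac1{4n^2}$.

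To conclude, $s\ge n/(4(d+1))-1$ turns this into $\Var f\ge\tfrac14+\tfrac1{2(d+1)}-\tfrac2n-\tfrac1{4n^2}$, and since $\tfrac1{2(d+1)}-\tfrac1{6d}=\tfrac{2d-1}{6d(d+1)}>0$ a short split into the cases $n\ge 12d$ (where the $1/(2(d+1))$ term dominates) and $4(d+1)\le n<12d$ (where already $\tfrac{2s}n\ge\tfrac2n>\tfrac1{6d}$, using $s\ge1$) gives $\Var f>\tfrac14+\tfrac1{6d}$ for all $n\ge 4(d+1)$. For the remaining bounded range $3d\le n<4(d+1)$ I would run the same construction with $I$ a single vertex $v$ of minimum degree (so $\deg v\le d$): when $n\ge 2d+4$ one can still place $V^-$ so that $|\E f|\le\tfrac1{2n}$, yielding $\Var f\ge\tfrac14+\tfrac2n-\tfrac1{4n^2}>\tfrac14+\tfrac1{6d}$ (using $n<12d$); and the finitely many pairs still permitted by $v(G)\ge 3d$, namely $(d,n)\in\{(2,6),(2,7),(3,9)\}$, are checked directly, the graph there being a tree, unicyclic, or a cycle. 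The only genuine obstacle is the bookkeeping: the conceptual content --- that average degree $\le d$ buys an $\Omega(1/d)$ improvement over $\spr(\Kn)=\tfrac14$ --- is immediate from the construction, and all the care goes into squeezing the constant down to exactly $\tfrac1{6d}$ and into the smallest values of $n$ (the hypothesis $v(G)\ge 3d$ is genuinely needed, since e.g.\ $\spr(\Kx_{d+1})\le\tfrac14$).
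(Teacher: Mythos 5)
Your overall approach is the same as the paper's: exhibit a three-level Lipschitz function obtained by a balanced $\pm\frac12$ split, then promote a linear-sized set $T$ of low-degree vertices to a third level, using the smallness of $N(T)$ to make room. Two choices you make are strictly weaker than the paper's and create the bookkeeping trouble you describe. First, the independence of $I$ is unnecessary: $f$ is \emph{constant} on $I$, so adjacent pairs inside $I$ satisfy the Lipschitz condition automatically, and the bound $|N(I)|\le 2d\,|I|$ needs only the degree cap, not independence. Second, and more to the point, you bound $|N(I)|$ via a Markov argument (degrees $\le 2d$), giving $|I\cup N(I)|\le(2d+1)|I|$ and a set of size roughly $n/(4(d+1))$. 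The paper instead takes $T$ to be the $t=\lfloor n/(2d)\rfloor$ vertices of \emph{least} degree and uses the sharper fact that the $t$ smallest degrees sum to at most $t\bar d\le td$, so $|N(T)|\le td\le n/2$ directly; this yields a promoted set about twice as large, and then $\spr(G)\ge \tfrac14+(\tfrac1d-\tfrac2n)(1-\tfrac1d)\ge\tfrac14+\tfrac1{6d}$ falls out in one line for all $n\ge 3d$, with no case split. The smaller set in your argument is what forces the casework, and that casework has a real gap: for $(d,n)=(3,9)$ a connected graph of average degree $\le 3$ may have up to $13$ edges and need not be a tree, unicyclic graph, or cycle, so the finite check as stated does not cover all graphs; also the boundary $n\ge 12d$ (with the lossy $s\ge n/(4(d+1))-1$) is off by a hair at $d=2$, $n=24$. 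These are repairable, but the paper's choice of $T$ and the degree-sum bound avoid them entirely.
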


\begin{proof} 
We shall show that if $V(G)=[n]$ then
\begin{equation} \label{eqn.maxdeglb}
\spr(G) \geq
\frac14 + \Bigpar{\frac{1}{d}-\frac{2}{n}}\Bigpar{1-\frac{1}{d}}.
\end{equation}
Note that this gives $\spr(G) \geq 1/4 +  \frac{1}{6d}$
if $d \geq 2$ and $n \geq 3d$.

Let $t=\lfloor \frac{n}{2d}\rfloor$, let $T$ consist of $t$
vertices of least degree, and let $U$ be the set of vertices
adjacent to a vertex in $T$.  Note that $|U| \leq n/2$. Let $A
\subseteq [n] \setminus T$ be such that $A \supseteq U \setminus
T$ and $|A|= a\=\lfloor \frac{n}{2}\rfloor$.  Let $B=[n]\setminus
(T \cup A)$.

Let $f(v)=0$ on $B$, $1$ on $A$ and $2$ on $T$.
For $X$ uniformly distributed over the vertices, and writing $f$ for $f(X)$,
we have 
$\E f = (1/n) (a+2t)$  and $\E f^2 = (1/n) (a+4t)$,
and hence 
\begin{align*}
\Var(f) & =  (1/n)(a+4t) - (1/n^2)(a^2+4at +4t^2)\\
& =
\frac{a}{n}(1-\frac{a}{n}) + \frac{4t}{n} - \frac{2t}{n} +
\frac{2t}{n^2} \ett{n \text{ odd}} - \frac{4t^2}{n^2}\\
& =
1/4 -\frac{1}{4n^2} \ett{n \text{ odd}} + \frac{2t}{n} +
\frac{2t}{n^2} \ett{n \text{  odd}} - \frac{4t^2}{n^2}\\
& \geq
1/4 + \frac{2t}{n}(1-\frac{2t}{n})\\
& \geq
1/4 + \Bigpar{\frac{1}{d}-\frac{2}{n}}\Bigpar{1-\frac{1}{d}}.
\qedhere
\end{align*}
\end{proof}

  To prove Theorem~\ref{thm.reghighdeg} we need an expansion result for 
  random regular graphs with high degree. Given $\beta>1$ and
$0<\eta<1$ let us say that a graph $G=(V,E)$ has
$(\beta,\eta)$-{\em expansion} if for each $T \subset V$ with $|T|
\leq (1-\eta) |V|/ \beta$ we have $|T \cup N(T)| \geq \beta |T|$.

\begin{lemma} \label{lem.regbigexp}
For each $\beta>1$ and $0<\eta<1/2$ there exists $d_0$ such that for all $d
\geq d_0$ \whp\ 
\gnd\ has $(\beta,\eta)$-expansion.
\end{lemma}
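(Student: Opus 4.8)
The plan is to run a first-moment argument in the configuration model, splitting the range of possible ``bad set'' sizes into a small regime and a linear regime that call for different probability estimates. First, it suffices to prove the statement for the configuration multigraph $\gxnd$ in place of $\gnd$: since $d$ is fixed, $\P(\gxnd\text{ is simple})$ is bounded below by a positive constant and $\gxnd$ conditioned on being simple is $\gnd$, so an event of probability $o(1)$ in $\gxnd$ has probability $o(1)$ in $\gnd$. In $\gxnd$ one works with the uniformly random perfect matching on the $dn$ half-edges; for disjoint vertex sets $A,B$ the event $N(A)\subseteq V\setminus B$ is precisely $\{e(A,B)=0\}$, that no half-edge of $A$ is matched to one of $B$. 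If $\gxnd$ lacks $(\beta,\eta)$-expansion there is a set $T$ with $k:=|T|\le(1-\eta)n/\beta$ and $|T\cup N(T)|<\beta k$; putting $s_k:=\lceil\beta k\rceil-1$ and choosing any $S$ with $T\cup N(T)\subseteq S\subseteq V$, $|S|=s_k$, we get $e(T,V\setminus S)=0$. A union bound over such pairs then gives, writing $\bar S:=V\setminus S$,
\begin{equation*}
  \P\bigl(\gxnd\text{ lacks }(\beta,\eta)\text{-expansion}\bigr)\le\sum_{k=1}^{\lfloor(1-\eta)n/\beta\rfloor}\binom{n}{k}\binom{n-k}{s_k-k}\,\P\bigl(e(T,\bar S)=0\bigr),
\end{equation*}
where in each term $(T,S)$ is a fixed pair with $|T|=k$, $|S|=s_k$; note $k\le s_k<\beta k$ and $|\bar S|=n-s_k>\eta n$ throughout.

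Fix $\eps_0:=1/(4\beta)$. For $k\le\eps_0 n$ I would reveal the partners of the $dk$ half-edges of $T$ one at a time: each genuinely new match must land among the $ds_k$ half-edges of $S$, and since at most $2dk$ half-edges are ever touched and $n-2k\ge n/2$, it does so with conditional probability at most $ds_k/(dn-2dk)\le 2\beta k/n<1$, while at least $dk/2$ new matches of $T$-half-edges are required. Hence $\P(e(T,\bar S)=0)\le(2\beta k/n)^{dk/2}$. Since $\binom{n}{k}\binom{n-k}{s_k-k}\le C(\beta)^k(n/k)^{\beta k}$, the $k$-th term is at most $\bigl[C(\beta)(2\beta)^{d/2}(k/n)^{d/2-\beta}\bigr]^k$; for $d>2\beta$ the choice of $\eps_0$ makes $2\beta\eps_0=\tfrac12$, so the bracketed base is $\le C(\beta)(4\beta)^{\beta}2^{-d/2}$, which is $<1$ once $d$ is large, and it tends to $0$ for each fixed $k$ as $n\to\infty$; summing over $1\le k\le\eps_0 n$ therefore gives $o(1)$.

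For $\eps_0 n<k\le\lfloor(1-\eta)n/\beta\rfloor$, where now $|T|\ge\eps_0 n$ and $|\bar S|>\eta n$ are both linear in $n$, I would instead reveal the partners of the $d|\bar S|$ half-edges of $\bar S$ and bound the probability that none of them ever lands in $T$. For the first $\lfloor dk/4\rfloor$ revelation steps at most $dk/2$ half-edges of $T$ are used up, so the conditional probability of hitting $T$ is at least $(dk/2)/(dn)=k/(2n)\ge\eps_0/2$; hence $\P(e(T,\bar S)=0)\le(1-\eps_0/2)^{\min(d|\bar S|/2,\,dk/4)}\le e^{-c\,dn}$ for a constant $c=c(\beta,\eta)>0$. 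Since $\binom{n}{k}\binom{n-k}{s_k-k}\le 4^n$, the sum of these (at most $n$) terms is at most $n\,4^n e^{-c\,dn}=o(1)$ as soon as $d>(\log4)/c$. Taking $d_0=d_0(\beta,\eta)$ larger than $2\beta$, than $(\log4)/c$, and than the threshold that makes the small-regime base $<1$, both parts are $o(1)$ and the lemma follows.

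The step I expect to be the main obstacle is obtaining a usable bound on $\P(e(T,\bar S)=0)$ in the intermediate range where $|T|$ is a constant fraction of $n$: there the entropy factor $\binom{n}{k}\binom{n-k}{s_k-k}$ is exponentially large in $n$, so the naive estimate ``all half-edges of $T$ fall into $S$'' (a power of $s_k/n$, which need not even be $<1$) is too weak, and one must pass to the complementary description $e(T,\bar S)=0$ and exploit that $T$ contains a constant fraction of the half-edges to get a probability that is exponentially small in $n$ with rate proportional to $d$. Splitting the sum at $k\sim\eps_0 n$, rather than at the various thresholds at which the competing direct and complementary bounds degenerate, is what lets the robust revelation estimate $(1-k/(2n))^{\Theta(dn)}$ cover the entire linear regime uniformly.
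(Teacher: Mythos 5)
Your proof is correct and takes the same first-moment route in the configuration model as the paper, but it uses a genuinely different probability estimate in the linear regime. Both arguments bound the expected number of bad pairs $(T,S)$ (the paper's $(T,U)$ with $|U|=\lfloor\alpha t\rfloor$ is essentially your $S\setminus T$) via a Stirling-type bound on the entropy term times a bound on $\P(e(T,\bar S)=0)$. The divergence is in that probability bound. You reveal the partners of $T$'s stubs and estimate the conditional probability of each new match landing in $S$ by $ds_k/(dn-2dk)$, i.e.\ the \emph{initial} numerator over the \emph{worst-case depleted} denominator; this quantity can exceed $1$ once $k$ is a positive fraction of $n$, which is exactly why you need to split at $\eps_0 n$ and switch to the complementary argument (reveal $\bar S$'s stubs, lower-bound the chance that at least one hits $T$). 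The paper instead observes that when revealing $T$'s stubs \emph{conditioned on every partner lying in $T\cup U$}, numerator and denominator shrink by the same amount, so $\frac{d(t+u)-j}{dn-j}\le\frac{t+u}{n}\le 1-\eta$ holds uniformly over the entire range $1\le t\le t_0=\lfloor(1-\eta)n/\beta\rfloor$. That tighter estimate gives the single geometric bound $f_{n,d}(t)\le(e^{2\alpha}e^{-\eta d/3})^t$ valid for all $t$, and the only remaining split is at $\log^2 n$, to turn a bounded geometric sum into $o(1)$ (you carry out an analogous split implicitly inside your small-$k$ range). So the paper buys a uniform treatment by a sharper revelation estimate, while your two-regime version is a bit longer but each piece is more elementary. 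One minor slip in your linear-regime argument: on the event $\{e(T,\bar S)=0\}$, revealing partners of $\bar S$'s half-edges never consumes a half-edge of $T$, so the clause ``at most $dk/2$ half-edges of $T$ are used up'' is not right — all $dk$ remain available throughout — but this only makes your lower bound $k/(2n)$ on the per-step hitting probability more conservative than needed, and the conclusion $\P(e(T,\bar S)=0)\le e^{-cdn}$ still holds.
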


\begin{proof}
We consider the configuration model for \gnd. Let $\alpha>0$
($\alpha$ large). For a positive integer $t$ let $f_{n,d}(t)$ be
the expected number of pairs $T$ and $U$ of sets of disjoint cells
where $|T|=t$ and $|U|= u := \lfloor \alpha t \rfloor$, and each
neighbour of a stub in a cell in $T$ is in $T \cup U$. Let $t_0 =
\lfloor (1-\eta) |V|/ \beta \rfloor$.  We aim to upper bound this
quantity $f_{n,d}(t)$, in order to show that $\sum_{t=1}^{t_0}
f_{n,d}(t) = o(1)$.  The lemma will then follow, with $\beta = 1 +
\alpha$.

Note first that,
since $\frac{d(t+u)-j}{dn-j} \leq \frac{t+u}{n}$ for each $0<j<dn$,
the probability that
each neighbour of a stub in a cell in $T$ is in $T \cup U$ is at most
$(\frac{t+u}{n})^{dt/2}$.
(If we choose the neighbours of the $dt$ stubs in cells in $T$ first,
we have to make at least $dt/2$ such choices.) 
Hence
\begin{align*}
f_{n,d}(t) & \leq
\binom n t \binom{n} u \left(\frac{t+u}{n}\right)^{dt/2}\\
& \leq
\left(\frac{ne}{t}\right)^t \left(\frac{ne}{u}\right)^u \left(\frac{t+u}{n}\right)^{dt/2}\\
& \leq
\left(\frac{ne}{t}\right)^t \left(\frac{ne}{\alpha t}\right)^{\alpha t} \left(\frac{(1+\alpha) t}{n}\right)^{dt/2}\\
& =
\left( e^{1+\alpha} \alpha^{-\alpha} (1+\alpha)^{d/2} t^{d/2 - 1-\alpha} n^{1+\alpha -d/2} \right)^{t}\\
& =
\left( e^{1+\alpha} \alpha^{-\alpha} (1+\alpha)^{1+\alpha}\left(\frac{(1+\alpha)t}{n}\right)^{d/2 - 1-\alpha} \right)^{t}.
\end{align*}
Now $\alpha^{-\alpha} (1+\alpha)^{1+\alpha} =
(1+\alpha)(1+1/\alpha)^{\alpha}
\leq  (1+\alpha) e$.
So 
$$
f_{n,d}(t) \leq \left( (1+\alpha) e^{2+\alpha} \left( \frac{(1+\alpha) t}{n} \right)^{d/2 - 1-\alpha} \right)^{t}.
$$

Let
$\alpha>0$ be sufficiently large that $\log(1+\alpha)+2+\alpha \leq 2\alpha$.
Let $d_0 \geq 6(1+\alpha)$, so that
$d/2 -1 - \alpha \geq d/3$ when $d\ge d_0$. 
For such $d$
$$
f_{n,d}(t)  \leq
\left( e^{2\alpha} \left(\frac{(1+\alpha)t}{n}\right)^{d/3}\right)^t.
$$
If $1 \leq t \leq \log^2 n$, say, then 
$$
f_{n,d}(t) \leq
\left(e^{2\alpha} \left(\frac{(1+\alpha) \log^2 n}{n} \right)^{d/3}\right)^{t}
= O(1/n),$$ since $d\geq 6$.
Also, since
$\frac{(1+\alpha)t}{n} \leq 1-\eta \leq e^{-\eta}$,
for $1 \leq t \leq t_0$ we have
$$
f_{n,d}(t) \leq
\left(e^{2\alpha} e^{-\eta d/3}\right)^{t}.
$$
From these bounds it is easy to complete the proof, with $\beta=1+\alpha$.
\end{proof}
\bigskip

\begin{lemma} \label{lem.Ss}
Let $\beta\ge3$,
 $\eta = \beta^{-1}$ and $n\ge6\gb+\gb^2/2$, 
and let $G=(V,E)$ have $(\beta,\eta)$-expansion.
Let $f$ be an integer-valued function on $V$ with median $0$.
Let $V_{\geq i}$ denote $\{v \in V: f(v) \geq i\}$ and so on.
Assume that $|V_{\geq 1}| \geq |V_{\leq -1}|$.  Then
\begin{equation} \label{eqn.onetwobig}
|V_{\geq i}| \leq \beta^{-(i-1)}n/2 \mbox{ and }
|V_{\leq -i}| \leq 2\beta^{-i} n \mbox{ for each } i \geq 1.
\end{equation}
\end{lemma}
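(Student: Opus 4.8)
The plan is to prove the two bounds in \eqref{eqn.onetwobig} by induction on $i$, running the $(\beta,\eta)$-expansion of $G$ level set by level set, very much as in the proof of Theorem~\ref{T1exp}, but carefully respecting the fact that $(\beta,\eta)$-expansion only controls sets of size at most $(1-\eta)n/\beta$ (here and below $n=|V|$). The standing structural facts I would use are: since $f$ is Lipschitz and integer-valued, $v\sim w$ forces $|f(v)-f(w)|\le 1$, so $N(V_{\ge i})\subseteq V_{\ge i-1}$ and $N(V_{\le -i})\subseteq V_{\le -(i-1)}$; in particular, for $i\ge1$ the set $V_{\le -i}\cup N(V_{\le -i})$ lies in $V_{\le 0}$ and hence is disjoint from $V_{\ge1}$. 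Finally, $0$ being a median gives $|V_{\ge1}|\le n/2$ and $|V_{\le-1}|\le n/2$.

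\emph{Upper tail.} The base case $i=1$ is just $|V_{\ge1}|\le n/2$. For the step, assume $|V_{\ge i}|\le\beta^{-(i-1)}n/2$. First I would show $|V_{\ge i+1}|\le(1-\eta)n/\beta$, so that the expansion hypothesis applies to $V_{\ge i+1}$: if it failed, pick $T\subseteq V_{\ge i+1}$ with $|T|=\lfloor(1-\eta)n/\beta\rfloor$; then $|T\cup N(T)|\ge\beta|T|$ while $T\cup N(T)\subseteq V_{\ge i}$, so $\beta\lfloor(1-\eta)n/\beta\rfloor\le|V_{\ge i}|\le n/2$, and with $\eta=1/\beta$ this contradicts $n\ge6\beta+\beta^2/2$. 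Granting this, apply expansion to $V_{\ge i+1}$ itself: $\beta|V_{\ge i+1}|\le|V_{\ge i+1}\cup N(V_{\ge i+1})|\le|V_{\ge i}|\le\beta^{-(i-1)}n/2$, i.e.\ $|V_{\ge i+1}|\le\beta^{-i}n/2$.

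\emph{Lower tail.} The hypothesis $|V_{\ge1}|\ge|V_{\le-1}|$ enters here, essentially only in the base case $i=1$. If $|V_{\le-1}|\le(1-\eta)n/\beta$, expansion gives $\beta|V_{\le-1}|\le|V_{\le-1}\cup N(V_{\le-1})|\le n-|V_{\ge1}|\le n-|V_{\le-1}|$ (using disjointness from $V_{\ge1}$), so $|V_{\le-1}|\le n/(\beta+1)<2n/\beta$. If instead $|V_{\le-1}|>(1-\eta)n/\beta$, I would take $T\subseteq V_{\le-1}$ of size $\lfloor(1-\eta)n/\beta\rfloor$ and, exactly as above but with $V_{\le0}$ in place of $V_{\ge i}$, use that $T\cup N(T)$ avoids $V_{\ge1}$ to get $\beta\lfloor(1-\eta)n/\beta\rfloor\le n-|V_{\ge1}|\le n-|V_{\le-1}|$; combined with $n\ge6\beta+\beta^2/2$ this yields $|V_{\le-1}|\le 2n/\beta$ (for small $\beta$, say $\beta\le4$, one can shortcut this entirely since $|V_{\le-1}|\le|V_{\ge1}|\le n/2\le 2n/\beta$). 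For $i\ge1$ the step from $i$ to $i+1$ mirrors the upper tail: first check $|V_{\le-(i+1)}|\le(1-\eta)n/\beta$ (else a subset of maximal size expands inside $V_{\le-i}$, contradicting $|V_{\le-i}|\le 2\beta^{-i}n\le 2n/\beta$, again via $n\ge6\beta+\beta^2/2$; for $\beta=3$ one uses the sharper $|V_{\le-i}|\le\min(n/2,2\cdot3^{-i}n)$), and then expansion gives $\beta|V_{\le-(i+1)}|\le|V_{\le-i}|\le 2\beta^{-i}n$, i.e.\ $|V_{\le-(i+1)}|\le 2\beta^{-(i+1)}n$.

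The one place I expect genuine friction is the constant bookkeeping at each level: before invoking the raw $(\beta,\eta)$-expansion property one must first know that the level set in question has already dropped below the threshold $(1-\eta)n/\beta=(\beta-1)n/\beta^2$, and it is precisely for this ``applicability'' step that the lower bound $n\ge6\beta+\beta^2/2$ is needed, together with the factor $2$ in $2\beta^{-i}n$, which is there to absorb the additive $O(\beta)$ losses coming from the floor functions. The single most delicate point is the base case $|V_{\le-1}|\le 2\beta^{-1}n$: it is the only spot where the asymmetry hypothesis $|V_{\ge1}|\ge|V_{\le-1}|$ does real work, and it requires combining the observation that $N(V_{\le-1})$ cannot reach $V_{\ge1}$ with the expansion of a maximal-size subset of $V_{\le-1}$. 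Everything downstream of that base case is then a routine geometric decay driven by $N(V_{\le-i})\subseteq V_{\le-(i-1)}$ and $N(V_{\ge i})\subseteq V_{\ge i-1}$.
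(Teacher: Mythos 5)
Your proof is correct and follows essentially the same route as the paper's: after verifying that $|V_{\geq 2}|$ (and $|V_{\leq -2}|$) fall below the expansion threshold $(1-\eta)n/\beta$, geometric decay follows from $(\beta,\eta)$-expansion combined with $N(V_{\geq i}) \subseteq V_{\geq i-1}$, reducing everything to the base case $|V_{\leq -1}| \leq 2n/\beta$. The only cosmetic difference is that you split into cases on the size of $V_{\leq -1}$ where the paper cases on the size of $V_{\geq 1}$; the two arguments are dual, both hinging on the asymmetry hypothesis $|V_{\geq 1}|\geq|V_{\leq -1}|$, and yield the same bound.
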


\begin{proof}
Note that $|V_{\leq 0}| \geq n/2$ and $|V_{\geq 0}| \geq n/2$.
Observe also that $N(V_{\geq i}) \subseteq V_{\geq i-1}$.
If $|V_{\geq 2}| > (1-\eta) n/\beta$ then,
choosing a set $T\subset V_{\ge2}$ with
$|T|=\floor{ (1-\eta) n/\beta}$, 
\begin{equation}\label{klm2}
  |V_{\geq 1}| \geq |T\cup N(T)|\ge\gb|T|>(1-\eta)n-\gb
\ge n/2 \geq |V_{\geq 1}|,
\end{equation}
a contradiction:
thus $|V_{\geq 2}| \leq (1-\eta)n/\beta$.  Hence
$|V_{\geq 2}| \leq \frac{1}{\beta} |V_{\geq 1}| \leq  \frac{n}{2 \beta}$, and further for all $i \geq 1$ we have
$|V_{\geq i}| \leq \beta^{-(i-1)} |V_{\geq 1}|$.
Similarly, for all $i \geq 1$ we have
$|V_{\leq -i}| \leq \beta^{-(i-1)} |V_{\leq -1}|$.
Hence it suffices to show \eqref{eqn.onetwobig} for $i=1$, 
i.e., that
$|V_{\geq 1}| \leq n/2$, which is trivial, and
$|V_{\leq -1}| \leq 2n/\gb$.

Recall that $|V_{\geq 1}| \geq |V_{\leq -1}|$.
We consider two cases, depending on the size of $V_{\geq 1}$.
If $|V_{\geq 1}| \leq (1-\eta) n/\beta$ then
$|V_{\geq -1}| \leq |V_{\geq 1}| <2 n/\beta$.
If $|V_{\geq 1}| > (1-\eta) n/\beta$ then
$|V_{\geq 0}| \geq (1-\eta)n-\gb$ as in \eqref{klm2},
so
$|V_{\leq -1}| \leq \eta n+\gb\le 2n/\gb$.
\end{proof}

\noindent The last lemma easily yields:
\begin{lemma} \label{lem.betaspread}
For any $\eps>0$ there exists $\beta>1$ such that each graph $G$
with $(\beta,\beta^{-1})$-expansion
and $n$ large enough 
satisfies $\spr(G) < 1/4 +\eps$.
\end{lemma}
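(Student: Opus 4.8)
The plan is to invoke the ABS structure theorem to reduce to integer-valued Lipschitz functions, apply \refL{lem.Ss}, and then bound the variance directly from the geometric tail bounds it provides on the level sets of $f$.

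By Theorem~2.1 of~\cite{alon1998aii} (recalled in the introduction), $\spr(G)$ is attained by an integer-valued Lipschitz function $f$. Adding an integer constant changes neither the Lipschitz property nor $\Var(f)$, so we may assume $f$ has median $0$; replacing $f$ by $-f$ if necessary, we may also assume $|V_{\ge1}|\ge|V_{\le-1}|$, in the notation of \refL{lem.Ss}. Fix $\eps>0$, take $\beta\ge3$ (to be chosen below), and let $n$ be large enough that $n\ge 6\beta+\beta^2/2$, so that \refL{lem.Ss} applies with $\eta=\beta^{-1}$. Writing $p_i=|V_{\ge i}|/n$ and $q_i=|V_{\le-i}|/n$, this gives $p_1\le\tfrac12$, $q_1\le 2/\beta$, and $p_i\le\tfrac12\beta^{-(i-1)}$, $q_i\le2\beta^{-i}$ for every $i\ge1$.

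I would then express $\E f=\sum_{i\ge1}(p_i-q_i)$ and $\E f^2=\sum_{i\ge1}(2i-1)(p_i+q_i)$, and observe that for $\beta$ large every contribution except the $p_1$-term in each sum is $O(1/\beta)$ with an \emph{absolute} implied constant: the relevant series $\sum_{i\ge2}(2i-1)\beta^{-(i-1)}$ and $\sum_{i\ge1}\beta^{-i}$ are convergent geometric-type sums that are $O(1/\beta)$, and $q_1\le 2/\beta$. Hence $\E f^2\le p_1+O(1/\beta)$ and $\E f=p_1+O(1/\beta)$ (here $p_1\le\tfrac12$ controls the cross term), so
\[
\Var f=\E f^2-(\E f)^2\le p_1-p_1^2+O(1/\beta)\le\tfrac14+O(1/\beta),
\]
since $p_1(1-p_1)\le\tfrac14$. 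Choosing $\beta$ so large that this $O(1/\beta)$ error is below $\eps$ completes the proof.

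The one point requiring care is that one cannot simply bound $\Var f\le\E f^2$: that would only give $\le\tfrac12$. The mean must be kept — specifically the inequality $(\E f)^2\ge p_1^2-O(1/\beta)$ — and combined with $\E f^2\le p_1+O(1/\beta)$ so that the two $p_1$-terms assemble into $p_1(1-p_1)\le\tfrac14$. Everything else is a routine geometric-series estimate, and the hypothesis ``$n$ large enough'' absorbs the constraint $n\ge6\beta+\beta^2/2$ needed to apply \refL{lem.Ss}.
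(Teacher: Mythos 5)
Your proof is correct and rests on the same pillars as the paper's — reduction to an integer-valued function with median $0$, Lemma~\ref{lem.Ss}, and a geometric-series tail estimate — but assembles the final variance bound by a different route. You compute $\E f$ and $\E f^2$ separately and keep the mean in play; the observation that $p_1\le\tfrac12$ controls the cross term so that $(\E f)^2\ge p_1^2-O(1/\beta)$ is indeed the crux of your version, and it checks out. The paper sidesteps this bookkeeping by shifting by a constant rather than by the mean: it bounds $\Var(f)\le\E\bigl|f-\tfrac12\bigr|^2=\sum_i\frac{|V_i|}{n}\bigl(i-\tfrac12\bigr)^2$, notes that the two dominant level sets $V_0$ and $V_1$ sit at distance exactly $\tfrac12$ from the shift so their combined contribution is $\frac{|V_0|+|V_1|}{n}\cdot\tfrac14\le\tfrac14$, and then the remaining sum over $i\ne0,1$ is $O(1/\beta)$ by the same geometric decay from Lemma~\ref{lem.Ss}. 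The shift-by-$\tfrac12$ trick is a little slicker since it never needs $\E f$ at all, but both arguments deliver $\tfrac14+O(1/\beta)$ and both are valid.
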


\begin{proof}
Let $f$ be an integer-valued Lipschitz function on $G$. We may assume
that the median of $f$ is 0, and (by symmetry) that
$|V_{\geq 1}| \geq |V_{\leq -1}|$.
Then \refL{lem.Ss} yields, if $\gb\ge3$ and $n$ is large,
\begin{equation*}
\Var(f)
\le
\E\Bigabs{f-\frac12}^2
\le\frac14+\sum_{i\neq0,1}\frac{|V_i|}n(i-1/2)^2
\le\frac14+O(\gb\qw).
\qedhere
\end{equation*}
\end{proof}

Lemmas~\ref{lem.regbigexp} and~\ref{lem.betaspread} complete the
proof of Theorem~\ref{thm.reghighdeg}.


\section{Small worlds}  \label{sec.sw} 

  In this section we consider the small worlds model $R_{n,c/n}$ for $c>0$, and prove Theorem~\ref{thm.sw1}.
  We need some preliminary work so that we can appeal to Lemma~\ref{lem.decexp}.
  The first step is to show that we may assume that $c \geq 2$, by contracting sections of the ring.
  Now, if we delete the edges of the ring randomly, 
keeping each with probability $c/n$, we obtain a random graph $\Gnc$, whose 
giant component $H$ is a well-behaved weak $\alpha$-decorated expander by
\refL{LT2+}.  
  We show that using the ring to join the other vertices to $H$ yields further decorations but \whp\ we still have a
  well-behaved weak
  $\alpha'$-decorated expander. 

\step1{Reduction to the case $c=2$.}
  We start with a deterministic lemma,
  which will show that the spread does not shrink too much when we contract sections of the ring.

\begin{lemma} \label{lem.contract}
   Let $G$ be a connected graph on $V$ where $|V|=n$, let $k$ be an integer with $1 \leq k <n$, and let
   $\bar{n}=\lfloor n/k \rfloor$.
   Let $V_1,\ldots,V_{\bar{n}}$ be a partition of $V$ such that each induced subgraph $G[V_i]$ is a connected graph
   with $k$ or $k+1$ vertices.
   Form the graph $\bar{G}$ on $[\bar{n}]$ by contracting each $V_i$ to a
   single new vertex $i$. Then 
$$\spr(G) \leq \frac{(k\!+\!1)^3}{k} \spr(\bar{G}) + \frac{k^2}{4}.$$
\end{lemma}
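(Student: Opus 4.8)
The plan is to bound $\Var(f(X))$ for an arbitrary Lipschitz function $f$ on $G$ (with $X$ uniform on $V$) by $\tfrac{k^2}{4}+\tfrac{(k+1)^3}{k}\spr(\bar G)$, and then take the supremum over $f$. Write $\pi\colon V\to[\bar n]$ for the map sending each $v\in V_i$ to $i$, and set $I=\pi(X)$, so $\P(I=i)=|V_i|/n$ and, conditionally on $I=i$, $X$ is uniform on $V_i$. Let $\mu_i=\frac1{|V_i|}\sum_{v\in V_i}f(v)$ and let $\sigma_i^2$ be the variance of $f$ over (uniform) $V_i$. The law of total variance gives $\Var(f(X))=\sum_i\frac{|V_i|}{n}\sigma_i^2+\Var_I(\mu_\cdot)$, where $\Var_I$ denotes variance under the (generally non-uniform) law of $I$. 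The first sum produces the additive error: since $f$ is Lipschitz on the connected graph $G[V_i]$, which has $|V_i|\le k+1$ vertices, the values of $f$ on $V_i$ lie in an interval of length at most $\operatorname{diam}(G[V_i])\le k$, so $\sigma_i^2\le k^2/4$ (the variance of a random variable supported in an interval of length $k$ is at most $k^2/4$), and hence $\sum_i\frac{|V_i|}{n}\sigma_i^2\le k^2/4$.

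The crux is the term $\Var_I(\mu_\cdot)$. First I would show that $i\mapsto\mu_i$ is $(k+1)$-Lipschitz on $\bar G$. The key estimate is $|f(u)-\mu_i|\le(|V_i|-1)/2\le k/2$ for every $u\in V_i$: writing $w$ for a vertex of $V_i$ maximising $f$ on $V_i$, one has $\max_{V_i}f-\mu_i=\frac1{|V_i|}\sum_{v\in V_i}(f(w)-f(v))\le\frac1{|V_i|}\sum_{v\in V_i}d_{G[V_i]}(w,v)\le(|V_i|-1)/2$, where the last step uses that, ordering the vertices of the connected graph $G[V_i]$ by non-decreasing distance from $w$, the $j$-th of them (indexed from $0$) lies at distance at most $j$, so $\sum_{v\in V_i}d_{G[V_i]}(w,v)\le\sum_{j=0}^{|V_i|-1}j=\binom{|V_i|}{2}$; the symmetric bound holds for $\mu_i-\min_{V_i}f$, and together these give $|f(u)-\mu_i|\le(|V_i|-1)/2$. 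Now if $i,j$ are adjacent in $\bar G$ then some edge $uv$ of $G$ joins $V_i$ to $V_j$, so $|\mu_i-\mu_j|\le|\mu_i-f(u)|+|f(u)-f(v)|+|f(v)-\mu_j|\le k/2+1+k/2=k+1$. Hence $\mu_\cdot/(k+1)$ is Lipschitz on $\bar G$, so $\Var_U(\mu_\cdot)\le(k+1)^2\spr(\bar G)$ for $U$ uniform on $[\bar n]$.

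Finally I would pass from $\Var_I$ to $\Var_U$. Since $|V_i|\le k+1$ and $\bar n=\lfloor n/k\rfloor$ (so $k\bar n\le n$), we have $\P(I=i)=|V_i|/n\le\frac{k+1}{k}\cdot\frac1{\bar n}$ for each $i$; hence for any function $h$ on $[\bar n]$, $\Var_I(h)\le\E_I\bigl[(h-\E_U h)^2\bigr]=\sum_i\P(I=i)(h(i)-\E_U h)^2\le\frac{k+1}{k}\cdot\frac1{\bar n}\sum_i(h(i)-\E_U h)^2=\frac{k+1}{k}\Var_U(h)$. Applying this with $h=\mu_\cdot$ gives $\Var_I(\mu_\cdot)\le\frac{(k+1)^3}{k}\spr(\bar G)$, and combining with the bound on the first sum yields $\Var(f(X))\le\frac{k^2}{4}+\frac{(k+1)^3}{k}\spr(\bar G)$ for every Lipschitz $f$, so the lemma follows.

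The step I expect to be the real obstacle — and the only place any cleverness is needed — is showing that $\mu_\cdot$ is $(k+1)$-Lipschitz rather than merely $(2k+1)$-Lipschitz, which is all one gets from the crude bound that $f$ varies by at most $k$ across each $V_i$. The gain comes entirely from the averaging estimate $|f(u)-\mu_i|\le(|V_i|-1)/2$, and it is precisely this factor-of-two saving in the Lipschitz constant that turns the naive constant $\frac{(k+1)(2k+1)^2}{k}$ into the stated $\frac{(k+1)^3}{k}$. The remaining ingredients (law of total variance, the bound $\sigma_i^2\le k^2/4$, and the $\frac{k+1}{k}$ loss from non-uniformity of $I$) are routine; one only needs to keep track of the hypotheses that each $G[V_i]$ is connected and has at most $k+1$ vertices.
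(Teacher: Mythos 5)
Your proof is correct and follows essentially the same route as the paper's: the variance decomposition across the parts, the bound $k^2/4$ on within-part variance, the $(k+1)$-Lipschitz claim for $i\mapsto\mu_i$, and the $\tfrac{k+1}{k}$-factor passage from the $|V_i|/n$-weighted variance of $\mu_\cdot$ to its uniform variance on $[\bar n]$ all match the paper's argument step for step. The one place you go beyond the paper is in actually justifying $|f(u)-\mu_i|\le k/2$ via the ordering-by-distance estimate $\sum_{v\in V_i}d_{G[V_i]}(w,v)\le\binom{|V_i|}{2}$; the paper states this as an unproved ``observation,'' and it is not immediate from ${\rm diameter}(G[V_i])\le k$ alone (which only gives $|f(u)-\mu_i|\le k$), so your argument fills a genuine if small gap.
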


\begin{proof}
  Let $f$ be a Lipschitz function for $G$, with mean $\mu$.  Let $\mu_i$ be the mean of $f|_{V_i}$, that is
  $\mu_i = (1/|V_i|) \sum_{w \in V_i} f(w)$. Then, by a standard decomposition of variance,
\begin{equation}  \label{eqn.twoterms}	
  \begin{split}
 \Var(f) &=
   \frac1{n} \sum_i \sum_{w \in V_i} (f(w)-\mu_i+\mu_i-\mu)^2 \\
 &=
  \frac1{n} \sum_i \left( \sum_{w \in V_i} ((f(w)-\mu_i)^2 + (\mu_i-\mu)^2) \right)  \\
 &=
  \sum_i  \frac{|V_i|}{n}\Var(f|_{V_i}) + \sum_i \frac{|V_i|}{n}
  (\mu_i-\mu)^2.
  \end{split}
\end{equation}
  We consider the two terms here separately.
  Since the induced subgraph $G[V_i]$ has diameter at most $k$, 
  $\Var(f|_{V_i}) \leq \spr(G[V_i]) \leq k^2/4$. Thus
$$ \sum_i  \frac{|V_i|}{n}\Var(f|_{V_i}) \leq \frac{k^2}{4}.
$$
  
  Now consider the second term above.
  Observe that for each $i$ and each $w \in V_i$, $|f(w)-\mu_i| \leq k/2$.
  Thus if $i$ and $j$ are adjacent in $\bar{G}$ then $|\mu_i-\mu_j| \leq k+1$.
  Let $\bar{f}(i)=\mu_i$ for each $i \in [\bar{n}]$.
  Then $(1/(k+1)) \bar{f}$ is Lipschitz for $\bar{G}$, and so
$$  \Var(\bar{f}) \leq (k+1)^2 \spr(\bar{G}). $$
  Next, let $\nux=(1/\bar{n}) \sum_i \mu_i$; 
let $h(i)= \mu_i-\nux$ for each $i \in [\bar{n}]$;
  and let the random variable $X$ take values in $[\bar{n}]$ with $\P(X=i)=|V_i|/n$.  Observe that $\E[\bar{f}(X)]= \mu$.
  Then
\begin{align*}
  \sum_i \frac{|V_i|}{n} (\mu_i-\mu)^2 & = 
  \Var \bar{f}(X) \; =  \; \Var h(X) \; \leq \; \E[h(X)^2] \\
  & \leq  \sum_i \frac{k\!+\!1}{n} (\mu_i-\nux)^2 \; \leq \; \frac{k\!+\!1}{k} \Var(h)\\
  & =  \frac{k\!+\!1}{k} \Var(\bar{f}) \; \leq \; \frac{(k\!+\!1)^3}{k} \spr(\bar{G}).
\end{align*}
  Now~(\ref{eqn.twoterms}) and the above bounds let us complete the proof.
\end{proof}

  With the above lemma in hand, we may quickly complete the reduction to the case $c=2$.
  Consider $0<c<2$.  Fix a positive integer $k$ with $k^2 c > 2(k+1)$.  
Observe that given two disjoint $k$-subsets of $[n]$,
  the probability that there is an edge in $G_{n,c/n}$ between the sets is $1-(1-c/n)^{k^2} = k^2c/n + O(1/n^2)$.
  Assume that \whp\ $\spr(R_{n,2/n}) \leq b$ for some constant $b$.
  
  Consider a large $n$, partition the vertex set of $\Cn$ into paths of $k$ or $k+1$ vertices
  (which we can always do once $n \geq k(k-1)$),
  and from $R_{n,c/n}$ form the corresponding contracted graph as in Lemma~\ref{lem.contract}.
  Call the contracted graph  $R_{\bar{n}}$.
  Then $R_{\bar{n}}$ contains a deterministic  Hamilton cycle arising from the cycle $\Cn$;
  and edges not in the cycle appear independently, each with probability at
  least $2(k+1)/n\ge2/\bar n$.
  Thus \whp\ $\spr(R_{\bar{n}}) \leq b$ by the assumption above.
  Hence by Lemma~\ref{lem.contract} \whp{} $\spr(R_{n,c/n}) \leq \frac{(k\!+\!1)^3}{k} b + \frac{k^2}{4}$.
  This completes the reduction to the case $c=2$. 

\step2{Joining the other vertices to the giant component $H$ of $G_{n,2/n}$.}
Let us think of $R_{n,c/n}$ as generated by starting with $G_{n,c/n}$ on
vertex set $[n]$, 
  picking an independent uniform random Hamilton cycle $C$ in the complete
  graph on $[n]$, 
  and adding the edges of $C$ if they are not already present.
We shall see that adding some edges of $C$ to the edges of $H$ \whp{} yields a
  well-behaved weak 
  $\alpha$-decorated expander $G^+$ on $[n]$ 
  (for a suitable fixed value of $\alpha>0$).  
  
  Condition on $H$ being a fixed well-behaved $\alpha_0$-decorated expander 
  (for some fixed $\alpha_0>0$), fix a corresponding subgraph $F$, and
  let $D_1,\ldots,D_{\ellx}$ be the decorations.
As usual, let $n':=v(H)$. 
We further assume (using \refL{Lgnc} and \refR{RP3}) that $H$ satisfies (P1)
and that  (P3) holds in the stronger version
$v_i(H) \leq e^{-2i} v(H)$ for all $i\ge\CCgnc$.

Discard all edges outside $H$ other than those from the random cycle $C$,
which we take as oriented. 
  The vertices in $V(H)$ divide the remaining vertices into $n'$ paths:
  for each vertex $w$ in $V(H)$ let $Q_w$ be the maximal path of vertices outside $H$ ending at $w$ 
  with $X_w \geq 0$ vertices (not counting $w$). 
We attach the path $Q_w$ at $w$ for each $w$ in $H$, forming the graph $G^+$. 
If $w$ is in $V(F)$ then we have one new decoration attached at $w$ (if
$X_w>0$). 
  If $w$ is in decoration $D_i$ then we add $X_w$ vertices and edges to
  $D_i$ (and no extra edge to $E(D_i,F)$).

  The properties (DE1), (DE3), (DE4), (P2), (P3) are easily seen to hold for $G^+$
 (for a suitable value of $\alpha>0$). We must check that also \DEij\  holds.

  What is the distribution of $(X_w:w \in V(H))$?
  We may assume without loss of generality that vertex $n$ is in $V(H)$.
  Think of the vertices in $[n]$ as white.  Re-colour vertex $n$ black.
  Choose a uniformly random subset $S$ of $[n-1]$ of size $n'-1$, and re-colour these elements black.
  Let $\tilde{X}_1$ be the number of white elements before the first black one; 
  and for $i=2,\ldots,n'$ let $\tilde{X}_i$ be the number of white elements
  between the $(i-1)$th black vertex and  the $i$th. 
  The distribution of $(X_w:w \in V(H))$ is the same as that of $(\tilde{X}_1,\ldots,\tilde{X}_{n'})$.
  Thus for each list $k_1,\ldots,k_{n'}$ of non-negative integers
  with $\sum_i k_i=n-n'$ we have
$$ 
\P(X_i=k_i \mbox{ for each } i) =  
\binom{n-1}{n'-1}\qw.
$$ 
  It follows (see for example~\cite{dr98})
  that the family $(X_w:w \in V(H))$ is negatively associated. 
  Also $X_w \leq_s \tilde{X}$ for each $w$, where $\tilde{X}$ is geometric
  with parameter $p' = \frac{n'-1}{n-1}$ (and mean $1/p'-1$),  
  and $\le_s$ means stochastic ordering (\ie, there exists a coupling such that $X_w\le X$). 
  But $p' \geq p:=\gamma/3$ for $n$ sufficently large by (P1). 
  Then $X_w \leq_s X$ where $X$ is geometric with parameter $p$ (note that this value $p$ is fixed). 

  Let $\cA = (A_i: i \in I )$ be the partition of $V(H)$ into the vertex sets $V(D)$ of the decorations $D$ of $H$
  together with the singletons $\{w\}$ for each $w \in V(F)$.
  Thus $|I| = \ellx + v(F)$. 
For each $i$, let $D_i^+:=\bigcup\set{Q_w:w\in A_i}$ be the (possibly empty)
union of the  paths $Q_w$ attached to $A_i$.

  Let $Y_i := v(D^+_i)=\sum_{w \in A_i} X_w$ for each $i \in I$.
  The family $(Y_i:i \in I)$ is negatively associated, since it is formed by taking sums of disjoint
  members of $(X_w:w \in V(H))$, see~\cite{jdp83}. 
  Hence, letting $M_X(t):=\E e^{tX}$ denote the moment generating function,
 $M_{Y_i}(t) \leq M_X(t)^{|A_i|}$. 
  Also the family $(\ett{Y_i \geq j} : i \in I)$ is negatively associated
  for each $j$. 
  Let 
  $$I_j := \{i \in I: |A_i| \leq (p/2)j\},$$ and
  $$Z_j:= \sum_{i \in I} \ett{Y_i \geq j} \;\; \mbox{ and } \;\; Z'_j:= \sum_{i \in I_j} \ett{Y_i \geq j}.$$
  Note that for each $j \geq 2/p$
\begin{equation} \label{eqn.ring1}
  Z_j - Z'_j \leq |\{i \in I : |A_i| > (p/2)j\}|  
  \leq e(H)\, e^{-\alpha_0 (p/2)j}
\end{equation}
  by \DEij \, for $H$.

  Observe that if $f(t)= e^{-t} M_X(t)^{p/2}$ then 
$$ 
\frac{d}{dt} ( \log f(t))\bigm|_{t=0} 
=-1+ \frac{p}{2} \frac{M'_X(0)}{M_X(0)}
=-1+ \frac{p}{2} \E X
 = - \frac{1+p}2<0 ,
$$
  and so there exists $t_0>0$ such that $f(t_0)<1$.
   Let $\alpha = - \log f(t_0)$ so $\alpha>0$ and $f(t_0)=e^{-\alpha}$.
  For each $i \in I_j$, by Markov's inequality
  \begin{equation}\label{eyij}
	\begin{split}
\E[\ett{Y_i \geq j}] &=
   \P(Y_i \geq j) \; \leq \; e^{-t_0 j} M_{Y_i}(t_0)\\
   & \leq  e^{-t_0 j} M_{X}(t_0)^{|A_i|} 
\; \leq \; (e^{-t_0} M_{X}(t_0)^{p/2})^j \; = \; e^{-\alpha j}.	  
	\end{split}
  \end{equation}
  Since the family $(\ett{Y_i \geq j} : i \in I)$ is negatively associated,
\begin{equation*}
  M_{Z_j'}(t) \le \prod_{i\in I_j} M_{ \ett{Y_i \geq j}}(t)
  \le M_{\Be(e^{-\ga j})}^{|I_j|}(t) =M_{\Bi(|I_j|,e^{-\alpha j})}(t)
\end{equation*}
  for each $t \geq 0$. Hence, the usual Chernoff estimates for the upper tail
  for $\Bi(|I_j|,e^{-\alpha j})$ apply to $Z_j'$ too, and thus,
  see for example Corollary 2.4 (and its proof, \cf{} Theorems 2.8 and 2.10)
  of~\cite{JLR},
\begin{equation} 
  \P\bigpar{Z'_j \geq 2 |I| e^{-(\alpha/3)j}}
   \leq \exp \bigpar{ - \tfrac13 |I| e^{-(\alpha/3)j}}.
\end{equation}
  For $j \leq (2/\alpha) \log |I|$, we have
  $|I| e^{-(\alpha/3)j}\ge |I|\qqq \ge (\cc n)\qqq$, and thus
\begin{equation} \label{eqn.ring3a}
  \P\bigpar{Z'_j \geq 2 |I| e^{-(\alpha/3)j}}
  \leq \exp \bigpar{ - \cc n\qqq}.\ccdef\ccring
\end{equation}
  For $j > (2/\alpha) \log |I|$, we use Markov's inequality and \eqref{eyij},
  which yield
\begin{equation}  \label{eqn.ring3b}
  \P(Z_j'>0)\le\E Z'_j \leq |I| e^{-\alpha j}. 
\end{equation}
  Summing \eqref{eqn.ring3a} or \eqref{eqn.ring3b} for $j\ge0$ yields
\begin{equation*} 
  \begin{split}
\P \Bigpar{ Z'_j > 2 |I| e^{-(\alpha/3)j} & \mbox{ for some } j\ge0 }
\\&
\le \sum_{j\le (2/\ga)\log |I|} \exp \bigpar{ - \ccring n\qqq}
+\sum_{j> (2/\ga)\log |I|} |I| e^{-\alpha j}
\\&
=o(1)+O(|I|\qw)	\to0
  \end{split}
\end{equation*}
as \ntoo, since $|I|\ge \cc n$.
  Hence 
$$ \P(Z'_j < 2 |I| e^{-(\alpha/3)j} \mbox{ for each } j=0,1,\ldots) \to 1 \; \mbox{ as } n \to \infty. $$
  This result together with~(\ref{eqn.ring1}) shows that, for some fixed $\alpha'>0$, 
$$ \P(Z_j < 3e(H) e^{-\alpha' j} \mbox{ for each } j=0,1,\ldots) \to 1 \;
  \mbox{ as } n \to \infty, $$
which by \DEij{} for $H$ easily implies that \DEij{} holds for $G^+$ \whpx.

  Hence, \whp{} $G^+$ is a well-behaved weak $\ga$-decorated expander 
and we may use Lemma~\ref{lem.decexp} to see that then \eqref{t3} holds 
for $G^+$, and consequently for $\Rnc$, which
completes the proof of Theorem~\ref{thm.sw1}. 
\smallskip

In Lemma~\ref{LT2}, we may insist that the giant component satisfies (DE2) rather than just \DEij, see~\cite{BKW}.
  Using this result, it is not hard to adapt the above proof to deduce that \whp{} $G^+$ satisfies (DE2) rather than just \DEij.
  \smallskip
  
  To set Theorem~\ref{thm.sw1} in context, note that for any $K$ 
  there exists a constant $c_0>0$
  such that if $0< c \leq c_0$ then \whp\ $\; \spr(R_{n,c/n}) >K$.  Indeed we have the following result.

\begin{proposition} \label{prop.sw}
  For any $K$ there exists a constant $\eps>0$ such that the following holds.
  Let $G_n$ be formed from the cycle $\Cn$ by adding at most $\eps n$ edges.  Then $\spr(G_n) \geq K$ for
  $n$ sufficiently large.  
\end{proposition}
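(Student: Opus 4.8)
The plan is to show that a cycle plus a bounded number of extra edges still admits a Lipschitz function with large variance. The key observation is that removing few edges from a cycle leaves a long path, and a long path has large spread. More precisely, suppose $G_n$ is the cycle $\Cn$ together with a set $A$ of at most $\eps n$ chords. First I would consider the Lipschitz function on $\Cn$ alone that goes $0,1,2,\dots$ around, but of course a chord can short-cut this, so instead the plan is to find a long arc of the cycle with no chord endpoints inside it, and build the Lipschitz function supported there.

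Here are the steps. First, the at most $\eps n$ chords have at most $2\eps n$ endpoints on the cycle, so by averaging there is an arc $P$ of the cycle containing at least $n/(2\eps n) = 1/(2\eps)$ consecutive vertices none of which is an endpoint of a chord; call its length $L \ge \lfloor 1/(2\eps)\rfloor$. Actually, to get a constant-size \emph{lower bound} on the spread I only need $L$ to exceed some threshold depending on $K$, so it suffices to choose $\eps$ small enough (depending on $K$) that $L$ is large. Second, on this arc $P = (x_0,x_1,\dots,x_L)$ there are no incident chords, so every neighbour in $G_n$ of an internal vertex $x_j$ ($1\le j\le L-1$) is $x_{j-1}$ or $x_{j+1}$. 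Now define $f$ on $V(G_n)$ by letting $f$ follow a ``tent'' profile along the first half of $P$ — say $f(x_j) = \min(j, L-j, r)$ for a parameter $r \approx L/4$ — and $f(v)=0$ for every vertex $v$ not among $x_1,\dots,x_{L-1}$. This $f$ is Lipschitz: along $P$ consecutive values differ by at most $1$ by construction, and the endpoints $x_0,x_L$ (and all vertices off $P$) get value $0$, which is consistent with $f(x_1),f(x_{L-1})\in\{0,1\}$; crucially, because $P$ has no incident chords, no edge of $G_n$ other than those of $P$ touches an internal vertex of $P$, so no other constraint can be violated. Third, compute the variance: at least a constant fraction (order $L$) of the vertices $x_j$ have $f(x_j)=r=\Theta(L)$, so writing $n'$ for $n$,
\[
\Var(f) \ge \frac{1}{2n^2}\sum_{u,v}(f(u)-f(v))^2 \ge \frac{|\{j: f(x_j)=r\}|\cdot |\{v: f(v)=0\}|}{n^2}\, r^2 \ge c\,\frac{L}{n}\cdot 1 \cdot L^2 = c\,\frac{L^3}{n},
\]
which is not yet bounded below by a constant. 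The fix is that $L$ is itself $\Theta(1/\eps)$, independent of $n$, but the factor $L/n \to 0$, so this naive bound fails; instead I should take the tent to have a \emph{fixed} height $h=h(K)$ and width $O(h)$ (a fixed-length sub-path of $P$, which exists once $\eps$ is small), with value $0$ elsewhere: then $|\{v:f(v)=0\}|/n \to 1$ and $|\{v:f(v)=h\}|/n = \Theta(h/n)$ still $\to 0$. So even this does not work directly — a single bounded-width tent contributes $o(1)$ variance.

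The real point, and the main obstacle, is therefore that one needs the function to take a large value on a \emph{positive fraction} of all $n$ vertices, which a few long arcs cannot supply. The correct approach is to use \emph{many} disjoint arcs simultaneously: the at most $\eps n$ chords partition the cycle into at most $2\eps n$ arcs whose lengths sum to $n$, so on average an arc has length $\ge 1/(2\eps)$, and in fact a constant fraction of the vertices of $\Cn$ lie on arcs of length at least, say, $1/(4\eps)$. On each such long arc put an independent tent of height $r = \lfloor 1/(8\eps)\rfloor \wedge$(something), with the middle plateau at value $r$ on, say, half the arc, and value decaying to $0$ near the two chord-endpoints bounding the arc; since distinct arcs meet only at chord-endpoints where $f$ is forced to small values consistently, the resulting global $f$ is Lipschitz. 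Now a constant fraction $\rho>0$ of all $n$ vertices have $|f(v)| \ge r/2 = \Theta(1/\eps)$, and another constant fraction (near the arc boundaries) have $f(v)$ near $0$; choosing the plateau to be at $+r$ on half the long arcs and at $-r$ on the other half makes $\E f \approx 0$, so
\[
\Var(f) = \E f^2 - (\E f)^2 \ge \rho\,(r/2)^2 - O(1) = \Omega(1/\eps^2),
\]
which exceeds $K$ once $\eps$ is chosen small enough depending on $K$. The details to check are: (i) a constant fraction of vertices lie on arcs of length $\ge 1/(4\eps)$ — this follows because arcs shorter than $1/(4\eps)$ number at most $2\eps n$ and so cover at most $2\eps n \cdot 1/(4\eps) = n/2$ vertices; (ii) the tent on each arc can be made to have its plateau cover at least, say, a quarter of the arc while decaying linearly to $0$ at both ends, which needs arc length $\gtrsim 4r$, consistent with $r \le 1/(8\eps)$ and arc length $\ge 1/(4\eps)$; (iii) balancing $+$ and $-$ plateaus so that $|\E f|$ is $O(r)$ while $\E f^2 = \Omega(r^2)$, hence $\Var f = \Omega(r^2) = \Omega(1/\eps^2)$. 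The main obstacle is genuinely this global counting: realizing that no single long path suffices and that one must aggregate the $\Theta(1/\eps)$ variance contribution over a positive-density family of disjoint long arcs.
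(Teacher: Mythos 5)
Your final construction — one tent with a flat plateau per long arc, signs $\pm r$ chosen to try to balance $\E f$ — has a genuine gap in the case where a few chords cluster together and leave one very long arc. Take the extreme case of a single chord: the cycle is split into one arc of length $\approx n$ and one of bounded length. Your tent on the long arc has a ramp of $O(r)$ vertices up to the plateau, a plateau of $(1-o(1))n$ vertices all with $f=r$, and a ramp of $O(r)$ vertices back down. Then $\E f = r-o(1)$ and $\E f^2 = r^2 - o(1)$, so $\Var(f) = \Theta(r^3/n) = o(1)$; the sign choice cannot help because there is essentially only one arc to sign. More generally your claim in step (iii) that ``a constant fraction of vertices have $f$ near $0$'' fails whenever the long arcs are few: the only vertices with small $f$ are the $O(r)$ ramp vertices per arc, the chord endpoints, and the short arcs, and none of these need amount to a constant fraction of $n$. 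Having $|\E f|=O(r)$ and $\E f^2=\Omega(r^2)$ is also not enough on its own to conclude $\Var f = \Omega(r^2)$, since the leading terms may cancel.

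The missing idea is to make $f$ \emph{oscillate at a fixed bounded scale}, rather than climb to a plateau that spans an entire arc. The paper defines a periodic sawtooth of period $4t$ on the cycle (with $t=\lceil\sqrt{6K}\rceil$, independent of $\eps$ and of the chord placement): within each length-$4t$ section $f$ goes $0,1,\dots,t,t-1,\dots,-t,\dots,-1$, so that each section has $\sum f = 0$ and $\sum f^2 = \frac43 t(t^2+\frac12)$. Then one simply resets $f\equiv 0$ on any section containing a vertex of degree $>2$ (and on the remainder). There are $\le 2\eps n$ such vertices, so at most $2\eps n$ sections out of $\lfloor n/4t\rfloor$ get zeroed; taking $\eps\le 1/(16t)$ leaves at least half the sections intact. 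Since each surviving section contributes $0$ to $\sum f$ and $\frac43 t(t^2+\frac12)$ to $\sum f^2$, and chord endpoints all have $f=0$ so the function stays Lipschitz, one gets $\Var(f)=\E f^2 \ge t^2/6\ge K$. This sidesteps your difficulty entirely: the positive variance contribution comes from sections of \emph{bounded} length, which exist in positive density no matter how the arcs are distributed, and the zero mean is automatic by per-section symmetry. Your arc-by-arc tent could in principle be repaired by replacing each plateau with a sawtooth repeated along the arc, but at that point you have reproduced the paper's construction with extra bookkeeping.
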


\begin{proof}
  Let $t:=\ceil{\sqrt{6K}}$,
and assume that $0<\eps \leq \frac1{16 t}$.  We shall show that $\spr(G_n)
\geq t^2/6\ge K$
  if $n$ is sufficiently large.
  
  We first define a Lipschitz function for $\Cn$. 
  It is convenient to let the vertex set of $\Cn$ be $V=\{0,1,\ldots,n-1\}$. Divide $V$ into $\lfloor n/4t \rfloor$
  sections $\{0,\ldots,4t-1\}$, $\{4t,\ldots,8t-1\}$, \dots{} 
plus a `remainder' (possibly).
  If $i \in V$ satisfies $i \equiv j \pmod{4t}$ where $0 \leq j <4t$, we set
  $f(i)=j$ if $0\leq j <t$, $f(i)=2t-j$ if $t \leq j<3t$ and $f(i)=j-4t$ if $3t \leq j<4t$.
  (Thus on the section $\{0,\ldots,4t-1\}$, $f$ increases from 0 to $t$, then decreases from $t$ to $-t$
  and then increases to $-1$, always taking unit steps.)  Observe that
  $$\sum_{j=0}^{4t-1} f(j)^2 = 4 \sum_{j=0}^{t-1} j^2 + 2 t^2 = \frac43 t(t^2+\frac12).$$
  
  Now re-set $f(v)$ to 0 for each $v$ in the `remainder' (that is, for $4\lfloor n/4t\rfloor \leq v \leq n$),
  and for each $v$ in a section which contains a vertex of degree $>2$ in~$G_n$.  Then $f$ is a Lipschitz function for $G_n$,
  and $f$ is unchanged on at least $\frac{n}{4t} -1 - 2 \eps n \geq \frac{n}{8t}-1$ sections.
  Now $\sum_{v \in V}f(v)=0$; and 
$$\sum_{v \in V} f(v)^2 \geq (\frac{n}{8t}-1) \frac43 t(t^2+\frac12) \geq n \cdot t^2/6$$
  for $n$ sufficiently large.
  Then $\Var(f) \geq t^2/6$, and so $\spr(G_n) \geq t^2/6\ge K$, as
  required. 
\end{proof}


\section{$\Kn$ with random edge-lengths}
\label{sec.lengths}
 
  In this section we prove Theorem~\ref{thm.lengths}.
  Given $\alpha>0$, following~\cite{BKW}, we say that a family $\cA=(a_i:i \in I)$ of non-negative numbers
  has an $\alpha$-\emph{exponential tail} if 
  $$ \frac{|\{i \in I: a_i \geq j\}|}{|I|} \leq 2 e^{-\alpha j} \;\; \mbox{  for all } j \geq 0.$$ 
  We need two lemmas.
  
\begin{lemma} \label{lem.biptail} 
  For each $\lambda>0$ there is an $\alpha>0$ such that the following holds.
  Consider $K_{n,n}$ with independent edge-lengths $X_e$, where $X_e$ is exponentially distributed
  with parameter $\lambda/n$ (and thus mean $n/\lambda$).
  Then \whp{} there is a perfect matching such that the edge-lengths have an $\alpha$-exponential tail.
\end{lemma}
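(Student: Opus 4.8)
The plan is to produce the perfect matching inside a sparse ``$k$-out'' subgraph built from the shortest edges at each vertex, and then read off the exponential tail from a routine concentration estimate. Fix a large absolute constant $k\ge2$ (independent of $n$ and $\lambda$). For a left vertex $i$, let $N_k(i)$ be the set of the $k$ right endpoints of the $k$ shortest edges at $i$ (ties, which a.s.\ do not occur, broken arbitrarily), and for a right vertex $j$ define $N_k(j)$ symmetrically. Let $\Gamma$ be the bipartite graph on $[n]\sqcup[n]$ with $i$ and $j$ adjacent iff $j\in N_k(i)$ or $i\in N_k(j)$, and let $\Gamma^L$ (resp.\ $\Gamma^R$) consist of just the edges contributed by the left vertices' choices $N_k(i)$ (resp.\ the right vertices' choices $N_k(j)$), so $\Gamma=\Gamma^L\cup\Gamma^R$. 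By exchangeability of the $n$ edge lengths at a fixed vertex, $\Gamma^L$ has exactly the law of a one-sided random $k$-out bipartite graph (each left vertex choosing a uniform $k$-subset of the right side, independently over left vertices), and it depends only on the rows of $(X_{ij})$; symmetrically, $\Gamma^R$ has the same law with the sides exchanged and depends only on the columns.

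First I would show that \whp\ $\Gamma$ has a perfect matching, and I expect this to be the main point: it is the one step needing genuine random-graph input rather than routine manipulation, as the relevant $k$-out property is not elementary. It is a standard fact (for $k\ge2$; this is the expansion lemma for $k$-out bipartite graphs, going back to Walkup) that \whp, for a suitable fixed $\eps=\eps(k)>0$, \emph{every} subset of at most $(1-\eps)n$ vertices on the chosen side has at least as many neighbours. Applying this to $\Gamma^L$ and to $\Gamma^R$ — a union bound over two \whp\ events, and each event involves only one side's randomness, so the fact that our left- and right-choices are all read off from the same edge lengths is irrelevant here — we may assume both $\Gamma^L$ and $\Gamma^R$ have this expansion. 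A short deterministic argument then gives Hall's condition for $\Gamma$: given $S\subseteq[n]$ on the left, if $|S|\le(1-\eps)n$ use the expansion of $\Gamma^L$ on $S$ directly; if $|S|>(1-\eps)n$, the expansion of $\Gamma^L$ on a subset of $S$ of size $\lfloor(1-\eps)n\rfloor$ forces $|N_\Gamma(S)|\ge\lfloor(1-\eps)n\rfloor$, hence the set $W:=[n]\setminus N_\Gamma(S)$ on the right has $|W|\le(1-\eps)n$ and, being disjoint from $N_\Gamma(S)$, satisfies $N_{\Gamma^R}(W)\subseteq[n]\setminus S$, so the expansion of $\Gamma^R$ on $W$ gives $|W|\le|N_{\Gamma^R}(W)|\le n-|S|$, i.e.\ $|N_\Gamma(S)|\ge|S|$. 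Thus \whp\ $\Gamma$, and hence $K_{n,n}$, has a perfect matching $M$ with $M\subseteq\Gamma$.

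Given such an $M$, I would next note that if $\{i,\pi(i)\}\in M$ then this edge is among the $k$ shortest at $i$ or among the $k$ shortest at $\pi(i)$, so, writing $\ell^{(k)}_v$ for the $k$-th smallest edge length at a vertex $v$, we have $X_{i\pi(i)}\le\ell^{(k)}_i$ or $X_{i\pi(i)}\le\ell^{(k)}_{\pi(i)}$. Consequently, for every $x\ge0$,
\[
  \bigl|\{e\in M:\ X_e\ge x\}\bigr|\ \le\ L_x+R_x,\qquad
  L_x:=\bigl|\{i\ \text{left}:\ \ell^{(k)}_i\ge x\}\bigr|,\quad
  R_x:=\bigl|\{j\ \text{right}:\ \ell^{(k)}_j\ge x\}\bigr|,
\]
and $L_x$, $R_x$ are each sums of $n$ i.i.d.\ indicators of probability $\mu_x:=\P\bigl(\Bin(n,1-e^{-\lambda x/n})\le k-1\bigr)$. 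Since $n(1-e^{-\lambda x/n})\ge\lambda x/2$ for $0\le x\le n/\lambda$, a Chernoff bound gives $\mu_x\le e^{k/4}e^{-\lambda x/16}$ on that range; set $\beta:=\lambda/16$ and $C:=e^{k/4}$, so $\mu_x\le Ce^{-\beta x}$. In particular $\P(\exists v:\ \ell^{(k)}_v\ge x)\le 2Cne^{-\beta x}=o(1)$ for $x\ge\tfrac2\beta\log n$, so \whp\ $M$ contains no edge of length $\ge\tfrac2\beta\log n$, which makes the desired inequality trivial for such $x$.

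Finally I would put $\alpha=\alpha(\lambda):=\min\bigl(\beta/4,\ \beta\log2/\log(4C)\bigr)>0$ and verify $L_x+R_x\le 2e^{-\alpha x}n$ for all $x\in[0,\tfrac2\beta\log n)$ \whp. For $x\le(\log2)/\alpha$ this holds trivially since $|\{e\in M:X_e\ge x\}|\le n\le 2e^{-\alpha x}n$. For $(\log2)/\alpha\le x<\tfrac2\beta\log n$ the choice of $\alpha$ ensures $\E(L_x+R_x)=2n\mu_x\le e^{-\alpha x}n$, so the target $2e^{-\alpha x}n$ comfortably exceeds the mean, with a multiplicative gap growing in $x$; a standard Chernoff / binomial-tail bound together with a union bound over a fine net of values of $x$ — legitimate since $x\mapsto L_x+R_x$ is nonincreasing — then gives $L_x+R_x\le 2e^{-\alpha x}n$ simultaneously for all such $x$. (The only mildly delicate range is where $2n\mu_x$ lies between a constant and a power of $\log n$: it occupies only $O(\log\log n)$ net points, and there $2e^{-\alpha x}n$ is still at least $n^{1/2}$, so the crudest tail bound suffices.) Combining this with the previous paragraph yields $|\{e\in M:X_e\ge x\}|\le 2e^{-\alpha x}n$ for all $x\ge0$, i.e.\ the edge lengths of $M$ have an $\alpha$-exponential tail, as required.
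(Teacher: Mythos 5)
Your proof is correct and follows essentially the same strategy as the paper's: restrict attention to the subgraph of $K_{n,n}$ spanned by the $k$ shortest edges at each vertex, produce a perfect matching inside it via a Walkup-type $k$-out argument, and then read the $\alpha$-exponential tail off a binomial/Chernoff estimate for the number of vertices whose $k$-th shortest incident edge is long. The genuine point of divergence is how the dependence between the two sides' ``shortest-edge'' choices is handled. The paper replaces each edge by two oppositely oriented arcs carrying fresh independent exponential lengths, with $X_e$ recovered as the minimum of the two arc lengths; this makes the $2$-out out-arc choices at the $2n$ vertices genuinely independent, so Walkup's $2$-out matching theorem applies verbatim to the set $S$ of $4n$ shortest arcs. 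You instead keep the original edge lengths and observe that the required expansion of $\Gamma^L$ is a function of the rows alone (and of $\Gamma^R$ of the columns alone), so the two whp expansion events can be combined by a union bound with no independence needed, after which Hall's condition for $\Gamma=\Gamma^L\cup\Gamma^R$ follows by a short deterministic argument. The paper's arc-splitting is slicker and allows $k=2$; your route avoids the trick at the price of a larger $k$. (One small inaccuracy: the one-sided $k$-out expansion lemma you cite does not hold for $k=2$ --- the naive union bound over small sets diverges --- but it does for $k\ge3$, and since you explicitly fix a large constant $k$ this does not affect the proof. The paper itself remarks that the $k=3$ variant of Walkup follows from Hall's theorem, which is exactly the lemma you are using.) Finally, your quantity $L_x+R_x$ plays the same role as the paper's $Z_j$, and both are controlled by the same style of Chernoff-plus-union-over-a-net argument.
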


\begin{proof}
  This follows from the result of Walkup~\cite{walkup80} that,
  if each vertex independently and uniformly picks arcs to two vertices in the other part,
  then \whp{} there is a perfect matching using only such arcs (ignoring orientations).
  (With minimal changes, we could allow each vertex to pick 3 arcs instead of 2, and then the corresponding weakened version of
  Walkup's result follows directly from Hall's Theorem, see~\cite{walkup80}.)
  
  Replace each edge of $K_{n,n}$ by a pair of oppositely directed arcs.
  Let the arcs $e$ have independent edge-lengths $X'_e$, each exponentially distributed with parameter $\frac\lambda{2n}$.
  For each edge $e$ of $K_{n,n}$, we may assume that $X_e$ is the minimum of $X'_{e1}$ and $X'_{e2}$ where $e1$ and $e2$ are
  the two arcs arising from orienting $e$.
  Let $S$ be the set of $4n$ arcs formed from the 2 shortest arcs leaving each vertex.
  
By Walkup's result \cite{walkup80}, 
there is \whp{} a perfect matching using only arcs in $S$.
Let $Z_j= |\{e \in S: X'_e \geq j \}|$. 
  It will suffice to show (by changing $\ga$) that
\begin{equation} \label{eqn.Zjshow}
  \P\left( Z_j < 16n e^{-\alpha j/3} \mbox{ for each } j \right) \to 1
\qquad  \mbox{as } n \to \infty.
\end{equation}

  Let $Y_n$ be the second smallest of $n$ independent random variables $\tilde{X}_1,\ldots,\tilde{X}_n$ which are each
  exponentially distributed with parameter $\frac{\lambda}{2n}$. Let $p= \P(\tilde{X}_1 < j) = 1-e^{-\frac{\lambda j}{2n}}$,
  and observe that $p \leq \frac{\lambda j}{2n}$. Then
\begin{align*}
  \P(Y_n \geq j) = \P(\Bin(n,p) \leq 1) & = (1-p)^n + n p (1-p)^{n-1}\\
&\le
(1+ \frac12 \lambda j e^{\frac{\lambda j}{2n}}) \cdot e^{-\frac{\lambda j}{2}}.
\end{align*} 
  Thus there is a constant $\alpha>0$ such that
  \begin{equation}
	\label{bua}
\P(Y_n \geq j) \leq 2 e^{-\alpha j} \qquad \mbox{for each } j \geq 0. 
 \end{equation}

  Let $\tilde{Y}_1,\ldots,\tilde{Y}_{2n}$ be independent, each distributed like $Y_n$.
    Let $\tilde{Z}_j:= \sum_{i=1}^{2n} \ett{\tilde{Y}_i \geq j}$; and note that
$Z_j \leq_s 2 \tilde{Z}_j$ (recall that $\leq_s$ denotes stochastic
	domination).   
Then, by \eqref{bua},
$$ \tilde{Z}_j \leq_s \Bin(2n, 1 \wedge 2e^{-\alpha j}).
$$ 
  The remainder of the proof is quite similar to the end of the proof of Theorem~\ref{thm.sw1}.
  By a Chernoff estimate, 
$$ \P(\tilde{Z}_j \geq 8n e^{-\alpha j/3}) 
\leq \exp \bigpar{-\tfrac16\cdot{8n}e^{-\alpha j/3}}.
$$
  When $j\le(2/\ga)\log n$ this is $\le \exp({-n\qqq})$.
For larger $j$ we simply use Markov's inequality:
$$
\P(\tZ_j>0) \leq \E \tZ_j \leq 4n e^{-\alpha j}
$$
and thus
$$
\sum_{j \ge (2/\alpha) \log n}
\P(\tZ_j>0) \leq \frac 4{1-e^{-\ga}}n\qw.
$$
It follows that
$$ \P\left( \tilde{Z}_j \ge 8n e^{-\alpha j/3} \mbox{ for some } j\ge0  \right)
\le
\sum_{j=0}^\infty\P(\tilde{Z}_j \geq 8n e^{-\alpha j/3}) 
 \to 0 
$$
as \ntoo,  and~(\ref{eqn.Zjshow}) follows, completing the proof.
\end{proof}

\begin{lemma} \label{lem.biptail2}
  Fix $0<\gamma<1$.  Fix $\lambda>0$.  Then there is an $\alpha>0$ such that the following holds.
  Consider a complete bipartite graph $K_{a,b}$, with parts $A$ of size $a$ and $B$ of size $b$, where
  $\gamma n \leq a, b \leq n$.  Let the edges $e$ have independent lengths $\ell(e)$, each exponentially
  distributed with parameter $\lambda/n$.
 Then \whp{} there is a set of edges $S = \{ u w\}\subset A\times B$ such
 that 
  \begin{itemize}
  \item [(a)] $|\{ u \in A: uw \in S \}| =1$ for each $w\in B$,
  \item [(b)] $|\{ w \in B: uw \in S \}| \leq \lceil b/a \rceil$ for each $u
	\in A$,
\item [(c)] the family $(\ell(u w): uw \in S)$ has $\alpha$-exponential tails.	
  \end{itemize}
\end{lemma}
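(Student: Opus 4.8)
The plan is to reduce the unbalanced case $K_{a,b}$ to the balanced case already handled in Lemma~\ref{lem.biptail}, by splitting the larger side into roughly equal blocks and applying a matching argument to each block. Concretely, set $q = \lceil b/a \rceil$; then $b \le qa$, so I can partition $B$ into $q$ blocks $B_1,\dots,B_q$, each of size at most $a$ (and at least $\lfloor b/q\rfloor \ge \gamma n/q \ge \gamma n /(2/\gamma) = \gamma^2 n/2$ up to rounding, using $q \le 2/\gamma$ since $a,b \ge \gamma n$). For each block $B_k$, pad it arbitrarily up to size exactly $a$ if necessary (say by allowing a few vertices of $A$ to receive no edge from that block), and consider the complete bipartite graph between $A$ and $B_k$: by Lemma~\ref{lem.biptail} (with $n$ replaced by the common size $a \asymp n$ and $\lambda$ adjusted by a constant factor to account for the edge-length parameter $\lambda/n$ versus $\lambda/a$), \whp{} there is a perfect matching $S_k$ from $B_k$ into $A$ whose edge-lengths have an $\alpha_0$-exponential tail, for some $\alpha_0 = \alpha_0(\gamma,\lambda) > 0$ not depending on $k$.

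Taking $S = S_1 \cup \dots \cup S_q$, property (a) holds because each $w \in B$ lies in exactly one block $B_k$ and is matched exactly once within $S_k$; property (b) holds because each $u \in A$ receives at most one edge from each of the $q$ matchings, hence at most $q = \lceil b/a\rceil$ edges total. For property (c), observe that for any threshold $j \ge 0$,
\begin{equation*}
  |\{uw \in S : \ell(uw) \ge j\}| = \sum_{k=1}^q |\{uw \in S_k : \ell(uw) \ge j\}|
  \le \sum_{k=1}^q 2 e^{-\alpha_0 j}\, |B_k| = 2 e^{-\alpha_0 j}\, b = 2 e^{-\alpha_0 j}\, |S|,
\end{equation*}
using $|S_k| = |B_k|$ and $\sum_k |B_k| = b = |S|$; so the family $(\ell(uw):uw\in S)$ has an $\alpha_0$-exponential tail, and we may take $\alpha = \alpha_0$. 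The events ``$S_k$ exists with the tail bound'' each fail with probability $o(1)$, and since $q$ is a bounded constant, a union bound gives the conclusion \whp.

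The main point requiring care—the only real obstacle—is that the $q$ blocks must be handled by a single application of (or a single uniform constant from) Lemma~\ref{lem.biptail}: the blocks are not independent since they share the vertex set $A$ and their edge-lengths are disjoint but their matchings compete for the same $A$. This is resolved by noting that the matchings are chosen block-by-block on \emph{disjoint} edge sets $A \times B_k$, and Lemma~\ref{lem.biptail} is applied to each $(A,B_k)$ pair separately — there is no conflict because property (b) permits each $u\in A$ to be used once per block. The constant $\alpha_0$ depends only on $\gamma$ and $\lambda$ (through the bound $q \le 2/\gamma$ and the rescaling of the exponential parameter), so it is uniform over $k$ and over the admissible range of $a,b$. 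One should also check the trivial rounding issue when $a \nmid b$, handled by the padding remark above; this contributes nothing to the tail bound since padded vertices of $A$ simply appear in fewer matchings.
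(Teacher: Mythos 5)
Your approach is essentially the same as the paper's, which is equally terse: pad $B$, partition into $O(1)$ blocks that can be handled by Lemma~\ref{lem.biptail}, take the union of the matchings, and union-bound over the boundedly many blocks. That is correct, and you have identified the right issues (rescaling the exponential parameter, uniformity of $\alpha_0$, boundedness of $q$).

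There is one arithmetic slip in the display verifying (c). After padding $B_k$ to size $a$ and applying Lemma~\ref{lem.biptail}, the matching $M_k$ of $K_{a,a}$ satisfies $|\{e\in M_k:\ell(e)\ge j\}|\le 2ae^{-\alpha_0 j}$; restricting to the real vertices of $B_k$ only shrinks the count, so you get $|\{e\in S_k:\ell(e)\ge j\}|\le 2a e^{-\alpha_0 j}$, not $2|B_k|e^{-\alpha_0 j}$. Summing over $k$ then yields $2qae^{-\alpha_0 j}$ rather than $2be^{-\alpha_0 j}$, and the ratio to $|S|=b$ picks up the extra factor $qa/b\le 2/\gamma$. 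This is harmless — a prefactor $C=4/\gamma$ in front of $e^{-\alpha_0 j}$ can always be absorbed by taking $\alpha$ slightly smaller (using that the ratio is trivially $\le1$ for small $j$, and $Ce^{-\alpha_0 j}\le 2e^{-\alpha_0 j/2}$ for $j$ large), so the conclusion is unaffected — but as written the chain of inequalities is not valid. Replacing $|B_k|$ by $a$ in the display and then adding one sentence to absorb the resulting bounded constant would make the argument fully rigorous.
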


\begin{proof}
  By considering adding at most $a$ vertices to $B$, we see that it suffices to consider the case $a|b$.
  But now we see that it suffices to assume that $a=b$, and so the result follows from the last lemma.
\end{proof}

  Now we are ready to prove Theorem~\ref{thm.lengths}.
  If $X$ is uniform on $[n]$ and $Y$ is exponentially distributed with parameter $\frac{1}{n}$,
  then $X \leq_s 1+Y$; thus we may assume that edge-lengths are \iid{}, each distributed like $1+Y$.
  Next, replace each edge $e$ by a blue copy $e_B$ and a red copy $e_R$,
  and give these copies \iid{} edge-lengths, each distributed like $1 + Y'$
  where $Y'$ is 
  exponentially distributed with parameter $\frac1{2n}$.
  We may assume that the length $\ell(e)$ of $e$ is the smaller of the lengths of $e_R$ and $e_B$.
  Note that if $b=b(n)= - 2n \log (1-\frac2{n})$ then $b \sim 4$ and
$$ \P(Y' \leq b) = 1 - e^{-\frac{b}{2n}} = \frac{2}{n}.$$
  Thus by keeping only blue edges with an appropriate length $b+1 \leq 6$ (for large $n$)
  we may generate a random graph $G_{n,2/n}$.
  
For some $\ga_1>0$, \whp{} there is in this random graph
a giant component $H$ and a subgraph $F$ showing that $H$ is a
  well-behaved weak $\alpha_1$-decorated expander, see \refL{LT2+}. 
Condition on there being such an $H$ and $F$, and fix them. 
We also assume that (P1) holds, \ie, $n'>\gam n/2$, see \refL{Lgnc}.
Thus, 
$v(F) \geq \alpha_1 n'\ge\cc n$.
  List the decorations as $D_1,\ldots,D_{\ellx}$.  Let $W = [n] \setminus V(H)$.

  Now we use the red edges.
  By Lemma~\ref{lem.biptail2} applied to the red edges between $V(F)$ and $W$, 
  there is a set $S$ of red edges $\{ uw\}\subset V(F)\times W$
  such that 
  \begin{itemize}
  \item [(a)] $|\{ u \in V(F): uw \in S \}| =1$ for each $w\in W$,
  \item 
[(b)] $|\{w \in W: uw \in S\}| \leq \lceil \xfrac{|W|}{|F|}\rceil 
\leq \xfrac{1}{\alpha}$   for each $u \in V(F)$,
\item [(c)] the family $(\ell(uw): uw \in S)$ has $ \alpha_2 $-exponential
  tails (for a suitable $\alpha_2>0$). 
  \end{itemize}

  Let $G$ be the graph on $[n]$ with edge set $E(H) \cup S$.  We still have
  the subgraph $F$ and decorations 
  $D_1,\ldots,D_{\ellx}\; $; but now for each edge $uw \in S$ we have a new
  one-vertex decoration $\{w\}$ decorating $u \in V(F)$. 
  For $i=1,\ldots,\ellx$ let $\tilde{v}(D_i) = v(D_i)$; and for each $w \in
  W$ let $\tilde{v}(\{w\}) = \ell(uw) \ (\geq 1)$, 
  where $uw \in S$.
  Now use $D_i$ to refer to any of the $\ellx+ |W|$ decorations of $G$.
Then $G$ is a well-behaved weak $\alpha_3$-decorated expander, for a suitable
  $\ga_3$, 
  except that in condition \DEij, \ $v(D_i)$ is replaced by $\tilde{v}(D_i)$.
  
To show that each Lipschitz function for $G$ then satisfies
inequality~(\ref{t3}), 
which implies \refT{thm.lengths}, we follow the proof of Lemma~\ref{lem.decexp}.
  We need no changes until just after inequality~(\ref{t2b}) when the proof starts to deal with decorations.
  From there until inequality~(\ref{3i4}) replace each appearance of $v$ by $\tilde{v}$.  Now the proof works just as before.


\section{Open problems}\label{Sproblems}
  We saw in the preceding sections that high degree is precisely what is needed
  to force the spread of the random regular graph $\gnd$ to be close to $1/4$.
  We believe that a corresponding result should hold for the giant component $\Hnc$ of $\Gnc$.
\begin{problem} \label{problem.highdegH}
  Is it the case that for each $\eps>0$, there exists $c_0$ such that for each $c \geq c_0$
  \whp\ $\spr(\Hnc) < 1/4 + \eps$?
\end{problem}
If \refL{LT2} holds uniformly (in the sense that for any $c > 1$,
$\alpha = \alpha(c)$ can be chosen such that the conclusions of the theorem hold
in $H_{n,c'/n}$ for all $c' \geq c$, with this value of $\alpha$) then the proof of Theorem \ref{T3} can
be modified to yield an affirmative answer to the above question.
This uniformity seems very likely to hold, but does not follow immediately from the proof of  \refL{LT2} given
in \cite{BKW}.
  
There is a natural similar question for the `small world' random graph
$R_{n,c/n}$, 
  to complete the picture described in Theorem~\ref{thm.sw1} and
  Proposition~\ref{prop.sw}. 
  
\begin{problem} \label{problem.highdegR}
  Is it the case that for each $\eps>0$ there exists $c_1$ such that for each $c \geq c_1$ 
  \whp\ $\spr(R_{n,c/n}) < 1/4 + \eps$?
\end{problem}

\refT{C1} suggests that the spread of $\gnd$ might converge (in
probability) to a constant, and similarly for Theorem~\ref{C2} and \Hnc.

\begin{problem}
Do there exist constants $\alpha_d$ for each $d \geq 3$ and $\beta_c$ for each $c>1$ such that $\spr(\gnd)\pto \alpha_d$ and $\spr(\Hnc) \pto \beta_c$ as $\ntoo$?
\end{problem}

\noindent
We know that if the constants $\alpha_d$ exist then they are (weakly)
decreasing in $d$ and tend to $1/4$
as $d \to \infty$. It seems likely that the analogous results should hold for $\Gnc$.

\begin{problem}
  If the constants $\gb_c$ exist, are they decreasing in $c$, and do they tend to $1/4$ as $c \to \infty$?
\end{problem}
\noindent
  Again, there is a natural similar question for $R_{n,c/n}$.
\smallskip

For $R_{n,c/n}$, we can also ask about the constant $\CCsw(c)$ 
in \refT{thm.sw1} as $c\to0$. 
Proposition \ref{prop.sw} shows that $\CCsw(c)\to\infty$ as $c\to0$.
The proof of~\refT{thm.sw1} in \refS{sec.sw} yields, through the argument in Step 1 of the proof
with, for example, $k=\ceil{3/c}$, that we can take $\CCsw(c)=O(c\qww)$ as
$c\to0$. We conjecture that this is best possible, in analogy with
\refT{behaves} for $\Gnc$.

\begin{problem}
  Is the optimal $\CCsw(c)=\Theta(c\qww)$ as $c\to0$?
\end{problem}

  In the small worlds model $R_{n,c/n}$ we start with the deterministic cycle $\Cn$ and add edges independently with
  probability $c/n$.  Suppose that we start instead with a deterministic graph $G_n$ on $[n]$:
  let us denote the corresponding random graph by $R(G_n, \frac{c}{n})$, so $R_{n,c/n}$ is $R(\Cn, \frac{c}{n})$.
  For example a popular small worlds model takes $G_n$ as a power $\Cn^r$ of $\Cn$, 
  where two vertices are adjacent in $\Cn^r$ if they are at distance at most $r$ in $\Cn$.
  
  We may adapt the proof of Theorem~\ref{thm.sw1} to show the same result when $G_n$ is the $n$-vertex path $\Pn$;
  that is, there is a constant $\CC=\CCx(c)>0$ such that \whp{} $\spr(R(\Pn, \frac{c}{n})) \leq \gamma$.
  (Indeed, we could take $G_n$ as $\Cn$ less any set of edges which are at distance at least $\CC \log n$ apart,
  for a sufficiently large constant $\CCx$ depending on $c$.  For we could think of these edges as simply being coloured red,
and \whp{} no two red edges appear on the cycle in the same path $Q_w$  
between vertices in $V(H)$. If there is a red edge in $Q_w$, we of course
join the part before the red edge to $H$ by an edge from the first vertex in
$Q_w$ to its predecessor in the cycle.)
  
It seems likely that starting with the path $\Pn$ is the worst case, which
leads to the following problem.
  
\begin{problem}
  Is it the case that \whp{} $\spr(R(G_n, \frac{c}{n})) \leq \CC(c)$ for
  every sequence $G_n$ 
  of connected graphs on $[n]$ and every $c>0$?
\end{problem}




\newcommand\AAP{\emph{Adv. Appl. Probab.} }
\newcommand\JAP{\emph{J. Appl. Probab.} }
\newcommand\JAMS{\emph{J. \AMS} }
\newcommand\MAMS{\emph{Memoirs \AMS} }
\newcommand\PAMS{\emph{Proc. \AMS} }
\newcommand\TAMS{\emph{Trans. \AMS} }
\newcommand\AnnMS{\emph{Ann. Math. Statist.} }
\newcommand\AnnPr{\emph{Ann. Probab.} }
\newcommand\CPC{\emph{Combin. Probab. Comput.} }
\newcommand\JMAA{\emph{J. Math. Anal. Appl.} }
\newcommand\RSA{\emph{Random Struct. Alg.} }
\newcommand\ZW{\emph{Z. Wahrsch. Verw. Gebiete} }
\newcommand\DMTCS{\jour{Discr. Math. Theor. Comput. Sci.} }

\newcommand\AMS{Amer. Math. Soc.}
\newcommand\Springer{Springer-Verlag}
\newcommand\Wiley{Wiley}

\newcommand\vol{\textbf}
\newcommand\jour{\emph}
\newcommand\book{\emph}
\newcommand\inbook{\emph}
\def\no#1#2,{\unskip#2, no. #1,} 
\newcommand\toappear{\unskip, to appear}

\newcommand\webcite[1]{
\texttt{\def~{{\tiny$\sim$}}#1}\hfill\hfill}
\newcommand\webcitesvante{\webcite{http://www.math.uu.se/~svante/papers/}}
\newcommand\arxiv[1]{\webcite{arXiv:#1.}}

\def\nobibitem#1\par{}


\end{document}